\DeclareMathAlphabet\mathbfcal{OMS}{cmsy}{b}{n}
\newtcolorbox{shadedcvbox}[1][]{enhanced jigsaw,
  colback=white!80!blue,
  coltext={black},
  boxrule=0.5pt,
  arc=3mm,
  auto outer arc,
  boxsep=3pt,
  left=4pt,
  right=2pt,
  bottom=2pt,
  top=2pt,
  #1}
\newtheorem{thm}{Theorem}[section]
\newtheorem{lem}[thm]{Lemma}
\theoremstyle{definition}
\newtheorem{defn}[thm]{Definition}
\theoremstyle{remark}
\newtheorem{rem}[thm]{Remark}
\theoremstyle{definition}
\newtheorem{algo}[thm]{Algorithm}
\newcommand{\RR}{\mathbb{R}}
\newcommand{\XX}{\mathbf{X}} 
\newcommand{\uu}{\mathbf{u}} 
\newcommand{\vv}{\mathbf{v}} 
\newcommand{\by}{\mathbf{y}} 
\newcommand{\bx}{\mathbf{x}} 
\newcommand{\bom}{\mathbf{m}} 
\newcommand{\bS}{\mathbf{S}} 
\newcommand{\bI}{\mathbf{I}} 
\newcommand{\bP}{\mathbf{P}} 
\newcommand{\bU}{\mathbf{U}} 
\newcommand{\bV}{\mathbf{V}} 
\newcommand{\bY}{\mathbf{Y}} 
\newcommand{\bZ}{\mathbf{Z}} 
\newcommand{\bSig}{\boldsymbol{\Sigma}} 
\newcommand{\bThe}{\boldsymbol{\Theta}} 
\newcommand{\spn}{\text{span}}
\newcommand{\Inf}{\text{inf}}
\newcommand{\VecMat}[2]{\mathrm{Mat}_{#1,#2}(\RR)}
\DeclareMathOperator{\tr}{tr}
\DeclareMathOperator{\Exp}{Exp}
\DeclareMathOperator{\Log}{Log}
\newcommand{\red}[1]{{\textcolor{red}{#1}}}
\journal{Computers and Structures}
\begin{document}

\begin{frontmatter}

\title{On the stability of POD Basis Interpolation via Grassmann Manifolds for Parametric Model Order Reduction in Hyperelasticity}

\author{Orestis Friderikos\corref{cor1}}
\ead{friderikos@lmt.ens-cachan.fr}

\author{Emmanuel Baranger}
\ead{emmanuel.baranger@ens-paris-saclay.fr}

\author{Marc Olive\corref{cor1}}
\ead{marc.olive@math.cnrs.fr}

\author{David N\'eron}
\ead{david.neron@ens-cachan.fr}
\cortext[cor1]{Corresponding author.}
\address{Université Paris-Saclay, ENS Paris-Saclay, CNRS,  LMT - Laboratoire de Mécanique et Technologie, 91190, Gif-sur-Yvette, France.}

\begin{abstract}

This work considers the stability of Proper Orthogonal Decomposition (POD) basis interpolation on Grassmann manifolds for parametric Model Order Reduction (pMOR) in hyperelasticity. The article contribution is mainly about stability conditions, all defined from strong mathematical background. We show how the stability of  interpolation can be lost if certain geometrical requirements are not satisfied by making a concrete elucidation of the local character of linearization. 
To this effect, we draw special attention to the Grassmannian Exponential map and optimal injectivity condition of this map, related to the cut--locus of Grassmann manifolds. From this, explicit stability conditions are established and can be directly used to determine the loss of injectivity in practical pMOR applications. Another stability condition is formulated when increasing the number $p$ of mode, deduced from principal angles of subspaces of different dimensions $p$. 
This stability condition helps to explain the non-monotonic oscillatory behavior of the error-norm with respect to the number of POD modes, and on the contrary, the monotonic decrease of the error-norm in the two benchmark numerical examples considered herein. Under this study, pMOR is applied in hyperelastic structures using a non-intrusive approach for inserting the interpolated spatial POD ROM basis in a commercial FEM code. The accuracy is assessed by \emph{a posteriori} error norms defined using the ROM FEM solution and its high fidelity counterpart simulation. Numerical studies successfully ascertained and highlighted the implication of stability conditions. The various stability conditions can be applied to a variety of other relevant  problems involving parametrized ROMs generation based on POD basis interpolation via Grassmann manifolds. 
\end{abstract}

\begin{keyword}
Parametric Model Order Reduction (pMOR), Proper Orthogonal Decomposition, Grassmann manifolds, Interpolation Stability, Grassmannian cut-locus, Hyperelasticity



\end{keyword}

\end{frontmatter}


\tableofcontents


\section{Introduction}
\label{}

This work considers  the notion of stability conditions of POD basis interpolation on Grassmann manifolds. This interpolation method is used to adapt Reduced-order models (ROMs) to parameter changes in various scientific fields, among others, design, optimization, control, uncertainty quantification,  data-driven systems, etc. Here we introduce three important stability conditions that are quite essential to the interpolation method. The interesting thing about them is that they do not seem special to problems in hyperelasticity. It may be an illustration of general stability conditions applicable to a variety of problems of other scientific fields as well.

ROMs aim to decrease the computational burden of large-scale systems and solve parametrized problems by generating models with lower complexity, but accurately enough to represent the high fidelity counterpart simulations. One popular method is the Proper Orthogonal Decomposition (POD)~\cite{holmes2012turbulence,Henri2003,Mosquera2019b}, also known as Kharhunen-Lo\`eve Decomposition (KLD)~\cite{Kar1946,loeveprobability}, Singular Value Decomposition (SVD)~\cite{golub1996} or Principal Component Analysis (PCA)~\cite{Jolliffe2002,Abdi2010}. We need to emphasize that all these POD techniques are referred as \emph{a posteriori} as they require some knowledge (at least partial) on the solution of the problem.

Parametric Model Order Reduction  (pMOR) is used to generate a ROM  that approximates a full-order system with high accuracy over a range of parameters. In case of solving a parametric problem using the POD, the method starts by a sampling stage during which the full-order system is solved for some rather small set of \emph{training} points. The state variable field \emph{`snapshots'} are then compressed using the POD to generate a ROM basis that is expected to reproduce the most characteristic dynamics of its high-fidelity counterpart. Nevertheless, since the POD bases are generated for a set of training points, they are optimal only to these parameter values.
Thus, a main drawback of POD is the sensitivity to parameter changes and the lack of robustness over the entire parameter space. Consequently, any ROM basis generated by the approach outlined above cannot be expected to give a good approximation away from the training point. In pMOR, the question we have to address is how to compute a good approximation of the POD basis related to a \emph{new parameter} value. Multiple methods have been proposed for adapting POD basis to address parameter variation as thoroughly documented in  related review articles~\cite{Benner2015,Zimmermann2019,Cueto2014}.

For nonlinear systems, even though a Galerkin projection reduces the number of unknowns, the computational burden for obtaining the solution could still be high due to the prohibitive computational costs involved in the evaluation of nonlinear terms. Hence, the nonlinear Galerkin projection in principle leads to a ROM but its evaluation could be more expensive than the evaluation of the original problem. To this effect, to make the resulting ROMs computationally efficient, POD is typically used together with a sparse sampling method, also called \emph{hyper reduction}, such as the missing point estimation (MPE)~\cite{Astrid_2008}, the empirical interpolation method (EIM)~\cite{Radermacher2016}, the discrete empirical interpolation method (DEIM)~\cite{Chaturantabut2010}, the Gappy POD method~\cite{Everson1995}, and the Gauss-Newton with approximated tensors (GNAT) method~\cite{Carlberg2013}.

Parametric Model Order Reduction using \emph{POD basis interpolation} is done initially in the field of computational fluid dynamics which was proposed for parametrized systems that are linear in state~\cite{Mosquera2019b,Farhat2008,Amsallem2009,Mosquera2018}. Similar approach has been scarcely applied in hyperelasticity, like in~\cite{Niroomandi2012}, where they propose real time simulations of hypepelastic structures using POD basis interpolation, in combination with an asymptotic numerical method. Here, pMOR is used to hyperelastic structures by adapting pre-computed POD basis.

When addressing the question of POD basis interpolation, the main point is that \emph{interpolation cannot be done in a linear space}. Indeed, any mode $p$ POD basis performed on some matrix $\bS\in \VecMat{n}{N_t}$ give rise to a truncated matrix $\bS_{p}\in \VecMat{n}{p}$ (where $n=3N_s$ and $N_s$, $N_t$ respectively correspond to the number of spatial points and time points). Now, despite the appearances, computation can not be done in the linear space $\VecMat{n}{p}$ of matrices, as the matrix $\bS_{p}$ encodes a $p$ dimensional vector subspace. The goal is thus to make interpolation on the set of $p$ dimensional subspaces of $\RR^{n}$, which defines exactly the Grassmann manifold $\mathcal{G}(p,n)$. Such Grassmann manifold interpolation is well documented~\cite{Amsallem2009,Mosquera2018,Mosquera2019b,Edelman1998g,Absil2004}, all coming from the fluid mechanics community, and computation can be done explicitly.

Thus, we might have been satisfied with a simple application of the existing and now well-known formulas, using the \emph{logarithm map} to linearize, and then the \emph{exponential map} to return back to the manifold. Such maps are issued from the riemannian structure of Grassmann $\mathcal{G}(p,n)$ and its associated geodesics~\cite{Gallot1990}. A first condition appears, as the logarithm map is only defined on some subset $\mathrm{U}\subset \mathcal{G}(p,n)$ explicitly defined as a subset of non singular matrices. So linearization can only be done once we have checked that all training points are contained in $\mathrm{U}$. In fact, such a condition is usually checked, as square matrices are generically non--singular.

A second condition concerns the use of the exponential map, which is defined on all the vector space $\RR^{d}$ (with $d=p(n-p)$ the dimension of $\mathcal{G}(p,n)$). Nevertheless, it is only \emph{injective} inside a subset $\mathrm{V}\subset \RR^{d}$ deduced from the \emph{cut--locus}~\cite{Gallot1990} of the Riemannian manifold $\mathcal{G}(p,n)$. Considering all geodesics with the same starting point, such a cut--locus is in fact the set of points where such geodesics are no longer minimal, and thus the exponential map is no more injective. Without any control of such an injectivity condition, the return back of the interpolated curve \emph{via} the exponential map can lead to some disconnected curve on the manifold, which should be avoided.

An explicit determination of such a cut--locus was already mentioned in~\cite{Wong1967}, without any proof, and a result by Kozlov~\cite[Theorem 12.5]{Kozlov2000} make a clear understanding of such a cut-locus using singular values of matrix representation of a velocity vector. We thus write an explicit way to compute such a cut--locus, with clear proof. As this result is not a classical one, and to be self contained, we had to develop the necessary mathematics to obtain such cut--locus of the Grassmann manifold $\mathcal{G}(p,n)$, as well as the open subset $\mathrm{V}$.

In fact, from this cut--locus and its associated subset $\mathrm{V}$, it was possible to improve the already known exponential injectivity condition, obtained from the \emph{injectivity radius} of Grassmann manifolds~\cite{Kozlov2000}, and used in~\cite{Mosquera2019b} to control computations. In most of our cases, indeed, the injectivity condition issued from the cut--locus is better than the one obtained from injectivity radius.

A third stability condition considered here is related to the intrinsic non-inclusion defect of the interpolated subspaces of different dimensions. Numerical results showed indeed that the accuracy of interpolation may not improve by increasing the POD modes.
A consequence is that it is not possible to control or predict the interpolation behavior. At first glance, this fact seems inconsistent with the expected improvement of the solution by increasing the number of modes. We indicate that the non-connectivity of the solutions is inherited from the construction of the  interpolation formulae using the Logarithm and the Exponential maps. To prove the fact, our basic tool is the computation of the principal angles of two POD basis of different mode $p$. This enables us to compute the geometric distance between subspaces of different dimension~\cite{Ye2016}. To this end, a new stability condition will be tied with the geometric distance which measures the non-inclusion defect between these subspaces. To the best of the author's knowledge, this finding has never been reported in the variety of ROM problems involving POD basis  interpolation on Grassmann manifolds.

From all this, we finally get three kinds of stability condition, each clearly established: $(1)$ a first one about the logarithm map domain of definition, $(2)$ a second one on the loss of injectivity of the exponential map, \emph{via} the cut--locus of Grassmann manifolds and $(3)$ a third one about the increasing mode, controlled from a well-defined geometric distance between subspaces of different dimensions.

Considering the \emph{mechanical part}, the overall procedure comprises an off-line and an on-line stage. The off-line stage characterizes the potentially costly procedure of solving FEM problems associated with different values of the physical or modeling parameter (training points). The on-line stage consists of the POD basis interpolation on Grassmann manifolds to determine a ROM basis for an unseen target parameter. Then, a non-intrusive approach is introduced for the obtained spatial POD basis. Note, that this approach deviates from the POD methods that relying on a Galerkin/Petrov Galerkin projection on the governing equations. Instead, the ROM-FEM models are implemented by inserting the interpolated spatial POD basis using linear constraint equations in Abaqus.
It is evident that, by constraining the degrees of freedom, the reduced model still embeds the high dimension. We remark that we followed this approach using a commercial code only for evaluating the stability and accuracy of the adaption of POD basis via interpolation on Grassmann manifolds. This is because it is not our objective to implement a method of nonlinear model reduction for the effective evaluation of the nonlinear terms, although it is a quite challenging task to be realized inside a commercial FEM code.

In our applications we employed benchmark hyperelastic structures to elaborate the stability loss of POD basis interpolation even at low complexity models. We expect that the stability issues discussed herein will be also inherent and  critical for more demanding problems in hyperleasticity, to mention among others computation of soft tissues, blood vessels, human skin inflation, human Mitral valve, etc. For the pMOR, two hyperelastic structures modeled with isotropic and anisotropic constitutive laws are studied. Specifically, for the anisotropic model, a subclass of transversely isotropic materials is considered. In this subclass, the strain energy function is assumed to depend only on two invariant measures of finite deformation~\cite{Holzapfel2007,bonet1998simple,almeida1998finite,itskov2001generalized}. At the numerical examples, the decision made is to enter the parameters in two ways considering a) the model anisotropy defined by the fiber orientation angle, and b) the material coefficients of the hyperelastic constitutive equations.     

\textbf{Organization of the article} \\
The present paper is organized as follows. In~\autoref{sec:Problem_Formulation} and~\autoref{sec:POD} we recall the theoretical background so to understand the way to make interpolation of POD bases using the corresponding points on a Grassmann manifold. Then~\autoref{sec:CF_Interpolation_GM} produces all explicit algorithm to obtain interpolation on Grassmann manifolds, and we also define three stability conditions: one from the logarithm map, a second one from the exponential map, and a third one from increasing POD modes. The mechanical part starts with~\autoref{sec:App_HyperElast}, which covers the framework of hyperelasticity theory in continuum mechanics for an incompressible transverse isotropic material. In ~\autoref{sec:Numerical_Investigations},  the interpolation performance using two hyperelastic structures is shown, and  further important computational aspects are discussed. Finally, ~\autoref{sec:Conclusions} highlights the main results and some important outcomes. The~\ref{sec:Grassmann_Manifolds} is devoted to the mathematical proofs needed to have well-defined stability conditions, as for instance an explicit determination of the cut--locus of Grassmann manifolds.

\section{Problem Formulation}
\label{sec:Problem_Formulation}

We consider some mechanical problem governed by a specific parameter $\lambda \in [\lambda_{min},\lambda_{max}]\subset \RR$, which comes from hyperelasticity in our situation (see \autoref{sec:App_HyperElast}). For each parameter $\lambda$, the solution is given by a space-time smooth field
\begin{equation*}
(t,\XX)\in [0;T]\times \Omega_{0}\mapsto u^{\lambda}(\XX,t)\in \RR^3
\end{equation*}
where $\Omega_0$ is a closed convex subset of $\RR^3$ and $T>0$.

To avoid costly computations for all values $\lambda\in [\lambda_{min},\lambda_{max}]$, we would like to interpolate between a finite number of FEM solutions $u_i:=u^{\lambda_i}$, associated to $N$ training points $\lambda_1,\dotsc,\lambda_N$. In fact, it is at the level of the POD performed on the snapshot matrices $\bS(\lambda_i)$ (defined in the next section) associated to the solutions $u_i$ that this interpolation will be considered.

But one of the essential points of this POD is that it associates to each snapshot matrix $\bS(\lambda_i)$ a certain point $\bom_i$ of a Grassmann manifold $\mathcal{G}$, and it is therefore needed at this stage to interpolate between points $\bom_1,\dotsc,\bom_N$ on $\mathcal{G}$. It is now proposed to detail the link between a POD reduction and the construction of a point on a Grassmann manifold.

\section{Proper Orthogonal Decomposition and Grassmann manifolds}\label{sec:POD}

The POD method can be applied to curves defined in Hilbert spaces of infinite dimension. The initial idea is to determine a subspace of a given dimension $p$ (which is the fixed number of modes of the POD),  reflecting ``as well as possible" this curve, as it is very well explained in~\cite{Henri2003,Mosquera2018}. In most cases, however, we do not consider the entire curve, but only a finite number of points of a Hilbert space $\mathcal{H}_{\text{spatial}}=\RR^{N_s}$ of finite dimension $N_s$ (the number of space points). More precisely any FEM solution $u$ of our problem under consideration produces a \emph{snapshot matrix}
\begin{equation*}
\bS_{jk},\quad 1\leq j\leq 3N_s,\quad 1\leq k\leq N_t
\end{equation*}
with $N_t$ the number of time steps. Such matrix encodes in fact $N_t$ vectors $\uu_{k}:=u(\cdot,t_k)\in \mathcal{H}_{\text{spatial}}$, and we write
\begin{equation*}
\bS:=[\uu_1,\dotsc,\uu_{N_t}]
\end{equation*}

Take now $\langle\cdot,\cdot\rangle$ to be the standard inner product of the Hilbert space $\mathcal{H}_{\text{spatial}}$. To any $p$ dimensional vector subspace $\mathcal{V}_p$ of $\mathcal{H}_{\text{spatial}}$, there is an associated orthogonal projection
\begin{equation*}
\boldsymbol{\pi}_{p} \: : \: \mathcal{H}_{\text{spatial}}\longrightarrow \mathcal{V}_{p}
\end{equation*}
and the POD method address the question to minimize the distance function
\begin{equation*}
\mathcal{J}(\mathcal{V}_p):=\sum_{k=1}^{N_t} \| \uu_{k} -\boldsymbol{\pi}_{p}(\uu_{k})\|^2,\quad \|\cdot\|:=\sqrt{\langle \cdot,\cdot\rangle}
\end{equation*}
over all $p$ dimensional subspaces $\mathcal{V}_p$. It then appears that the set of all such subspaces define a \emph{smooth compact Riemannian manifold}~\cite{Boothby1986,Gallot1990}
\begin{equation*}
\mathcal{G}(p,n):=\left\{\mathcal{V}_p\subset \mathcal{H}_{\text{spatial}},\quad \dim(\mathcal{V}_p)=p\right\},\quad n:=3N_s
\end{equation*}
so that any $p$ dimensional vector subspace $\mathcal{V}_p$ can be considered as some \emph{point} $\bom\in \mathcal{G}(p,n)$, and the question is finally to minimize $\mathcal{J}(\bom)$ over all $\bom \in \mathcal{G}(p,n)$.

In practice, let consider an orthonormal basis $\phi_{1},\dotsc,\phi_{p}$ of $\mathcal{V}_p$ so that the matrix form of $\boldsymbol{\pi}_{p}$ is given by
\begin{equation*}
\boldsymbol{\Phi}_{p}\boldsymbol{\Phi}_{p}^{T},\quad \boldsymbol{\Phi}_p:=[\phi_{1},\dots,\phi_{p}]\in \VecMat{n}{p}
\end{equation*}
where $\VecMat{n}{p}$ is the vector space of $n\times p$ matrices, and (right) superscript ${(\cdot)}^T$ denotes the transposition operation. By direct computation, the distance function $\mathcal{J}$ is then rewritten
\begin{equation*}
\mathcal{J}(\bom)=\|\bS-\boldsymbol{\Phi}_{p}\boldsymbol{\Phi}_{p}^{T}\bS\|_{\text{F}}^{2}
\end{equation*}
where $\|\mathbf{A}\|_{\text{F}}:=\sqrt{\tr(\mathbf{A}\mathbf{A}^T)}$ is the Frobenius norm on $\VecMat{n}{p}$.

Now it is classically known that minimization of $\mathcal{J}$ is given by Eckart--Young Theorem~\cite{Eckart193,Golub1987,golub1996,Stewart1973} and can be obtained via a \emph{singular value decomposition} of $\bS$. Indeed, take this SVD to be
\begin{equation*}
\bS=\bU \bSig \bV^{T},\quad \bU:=[\phi_{1},\dotsc,\phi_{N_t}]
\end{equation*}
with singular values $\sigma_1\geq \sigma_{2}\geq \dotsc \geq \sigma_{N_t}$. Then one solution of minimizing $\mathcal{J}$ is given by
\begin{equation*}
\bom_{0}:=\spn(\phi_{1},\dotsc,\phi_{p})
\end{equation*}
which is unique whenever $\sigma_{p}>\sigma_{p+1}$~\cite{Henri2003}.
Let also define the \emph{reduced model} $\bS_p$ of our snapshot matrix by
\begin{equation*}
\bS_p:=\boldsymbol{\Phi}_{p}\boldsymbol{\Phi}_{p}^{T}\bS,\quad
\boldsymbol{\Phi}_{p}:=[\phi_{1},\dotsc,\phi_{p}].
\end{equation*}
For each snapshot matrix $\bS(\lambda_{i})$ associated to training points $\lambda_{i}$ ($i=1,\dots,N$), we thus obtain a point
\begin{equation*}
\bom_{i}:=\spn(\phi_{1}^{(i)},\dotsc,\phi_{p}^{(i)})\in \mathcal{G}(p,n)
\end{equation*}
once chosen a fix mode $p$ for the POD. For a new target parameter $\widetilde{\lambda}$, interpolation has to be done on the Grassmann manifold $\mathcal{G}(p,n)$, which is now detailed.

\section{ROM Adaptation Based on Interpolation in Grassmann Manifolds}
\label{sec:CF_Interpolation_GM}

Computation on manifold, such as the one of Lagrange interpolation, can only be done using \emph{local coordinates}. Such local coordinates are obtained via bijective maps, which are defined, in general, on subsets $\mathrm{U}$ of the manifold (called the \emph{local charts}). In the case of a Riemannian manifold, one can use the \emph{normal coordinates} directly deduced from the geodesics of the manifold.

In our case, local charts will be given by \emph{logarithm maps}, so we obtain smooth diffeomorphisms
\begin{equation*}
\Log \: : \: \mathrm{U}\longrightarrow \mathrm{V}:=\Log(\mathrm{U})\subset \RR^{d}
\end{equation*}
where $d$ is the dimension of the manifold, and the reverse operation is given by the exponential map. Nevertheless, such operation has to be well-defined, which is achieved when the exponential map is injective.   

Such an injectivity condition was already addressed in the work of Mosquera et al~\cite{Mosquera2019b}, using the injectivity radius of Grassmann manifolds (see ~\eqref{eq:Disk_Inj}). Other injectivity conditions are presented here, less restrictive than the one issued from the injectivity radius (see Remark~\ref{rem:Inj_Radius_wrt_Cut_locus}).

Another issue is the one of increasing the number $p$ of mode. Indeed, one should expect that the interpolation is sharpened by increasing $p$, which can be controlled by using the \emph{geometric distance} computed for subspaces of different dimensions, as defined in~\cite{Ye2016}.

Let us know present in the next~\autoref{subsec:Assumption_Local_Chart} the necessary assumptions to have a well-defined interpolation, while~\autoref{subsec:Interpol_Lagrangian} produce the algorithm to compute an interpolation, taking into account all stability conditions. Finally~\autoref{subec:Geometric_Distances} focus on the explicit formulae to compare two subspaces of different dimensions.

\subsection{Interpolation from logarithm and exponential map: necessary assumptions}\label{subsec:Assumption_Local_Chart}

Let us consider back the points $N$ points $\{\bom_{i}\}^{N}_{i=1}$ in the Grassmann manifold $\mathcal{G}(p,n)$, all obtained from the ROMs of the snapshot matrices (as detailed in \autoref{sec:POD}). The goal here is to obtain a \emph{well-defined} interpolation of a spatial POD basis associated with a new target point $\tilde{\lambda}$. This is detailed in \autoref{subsec:Interpol_Lagrangian}, and we just focus here on the main ideas issued from the seminal work of Amsallem~\cite{Amsallem2009}:
\begin{enumerate}
	\item Choose a base point $\bom_0$ in the family $\bom_1,\dots,\bom_N$, altogether with its associated logarithm map $\Log_{\bom_0}$ (from Definition~\ref{def:Log_map_Grass}).
	\item Compute the velocity vectors $v_i:=\Log_{\bom_0}(\bom_i)$ all lying in a tangent plane, which a vector space $\RR^{d}$ (with $d=p(n-p)$ the dimension of $\mathcal{G}(p,n)$).
	\item Compute a new velocity vector $\widetilde{v}$ associated to a target point $\widetilde{\lambda}$.
	\item Obtain an interpolated point $\widetilde{\bom}:=\Exp_{\bom_0}(\widetilde{v})\in \mathcal{G}(p,n)$ using the exponential map (from~\eqref{eq:Def_Exp_Map}) to return back to the Grassmann manifold $\mathcal{G}(p,n)$.
\end{enumerate}

As depicted in Figure~\ref{fig:Interpolation_Manifold}, it is nevertheless important not to forget that the logarithm map $\Log_{\bom_0}$ is only defined on some open set $\mathrm{U}_{\bom_0}$, taken from~\eqref{eq:Def_Open_Set_Log} and recalled below. So a first necessary condition is that
\begin{itemize}
	\item (C1): All points $\bom_1,\dotsc,\bom_N$ lie in $\mathrm{U}_{\bom_0}$.
\end{itemize}

To check such a condition, recall first that each point $\bom\in \mathcal{G}(p,n)$ correspond to an orthonormal basis stored in a $n\times p$ matrix
\begin{equation*}
\bY=[\by_1,\cdots,\by_p]\in \VecMat{n}{p},\quad \bY^T\bY=\bI_{p}.
\end{equation*}
Taking now matrices $\bY_i$ corresponding to $\bom_i$ ($i=0,\dots,N)$, such condition translate into
\begin{itemize}
	\item (C1)-matrix form: For all $i=1,\dots,N$, the matrix $\bY_0^{T}\bY_i$ is non singular.
\end{itemize}
From this and Theorem~\ref{thm:Exp_Diff_Cut_Locus}--\ref{thm:Cut_Locus_Grass} we deduce that the velocity vectors $v_i=\Log_{\bom_0}(\bom_i)$ all lie in the open set $\mathrm{V}_{\bom_{0}}=\Log_{\bom_0}\left(\mathrm{U}_{\bom_0}\right)$. Once computed the new velocity vector $\widetilde{v}\in \RR^d$, according to Theorem~\ref{thm:Exp_Diff_Cut_Locus}, a second necessary condition is then
\begin{itemize}
	\item (C2): $\widetilde{v}$ is inside the open set $\mathrm{V}_{\bom_{0}}$.
\end{itemize}
Such a condition seems to be more intricate than the previous one, but in fact it is simply related to singular values of a matrix. Indeed, in the case $2p\leq n$ (which will be our case), a velocity vector $\widetilde{v}$ is represented by a matrix $\widetilde{\bZ}\in \VecMat{n}{p}$ such that $\widetilde{\bZ}^T\bY_0=0$ (see~\eqref{eq:Def_Hor_Lift}). From Lemma~\ref{lem:rho_v_Grass} and Theorem~\ref{}, condition (C2) simply writes
\begin{itemize}
	\item (C2)-matrix form: Taking $\widetilde{\theta}_1$ to be the maximum singular value of $\widetilde{\bZ}$, we have $\widetilde{\theta}_1<\pi/2$.
\end{itemize}

The first condition (C1) is usually trivially satisfied, and the second one (C2) can be evaluated on a range of new parameters $\widetilde{\lambda}$, so to have an interval $[\widetilde{\lambda}_a,\widetilde{\lambda}_b]$ of well-defined interpolation. This was done on both benchmarks (see Figure~\ref{fig:Stability_condition_1_pbm_1} and~\ref{fig:Stability_condition_1_pbm_2}).

\begin{rem}\label{rem:Inj_Radius_wrt_Cut_locus}
	In the case of the compact manifold $\mathcal{G}(p,n)$, the exponential map is defined on all the vector space $\RR^{d}$, so it is always possible to compute a new point $\Exp_{\bom_0}(\widetilde{v})$ on the Grassmann manifold, so we obtain an interpolation which can be not well-defined. In the previous work of Mosquera et al.~\cite{Mosquera2018,Mosquera2019b}, an injectivity condition on the exponential map was defined using the \emph{injectivity radius} of $\mathcal{G}(p,n)$, given by\eqref{eq:Disk_Inj}, which translate into
	\begin{equation*}
	\|\widetilde{v}\|=\left(\widetilde{\bZ}^{T}\widetilde{\bZ}\right)^{1/2}=\left(\sum_{i=1}^{p}\widetilde{\theta}_{i}^2\right)^{1/2}<\frac{\pi}{2}
	\end{equation*}
	where $\widetilde{\theta}_{i}$ are the singular values of $\widetilde{\bZ}$, leading to a weaker condition than the (C2) one (see Lemma~\ref{lem:Cut_Better_than_Radius}).
\end{rem}

\begin{rem}[Violation of stability condition (C2) from an application point of view]
	Let us consider the case of the north hemisphere of the $2D$ sphere of radius $1$, with $\bom_{0}=N$ being the North Pole. The tangent plane is simply given by $\RR^{2}$, and to any velocity vector $v\in \RR^2$ corresponds a point on the north hemisphere, using the exponential map. Here, the exponential map is non injective for all $v\in \RR^2$ with length greater than $\pi/2$. If the interpolated curve inside $\RR^2$ is outside the disk of radius $\pi/2$ (see Figure~\ref{fig:Interpolation_Manifold}), then the corresponding interpolated curve on the north hemisphere is disconnected.
\end{rem}

\begin{figure}[H]
\begin{center}
\footnotesize 
\begin{tikzpicture}[scale=0.7]
\node [label={[xshift=0.8cm, yshift=-0.3cm]$N=\bom_0$}] (0) at (-7, 0) {$\bullet$};
\node [label=below:{$\bom_1$}] (1) at (-8, 1) {$\bullet$};
\node [label={[xshift=-0.4cm, yshift=-0.5cm]$\bom_2$}] (2) at (-6.5, -1.25) {$\bullet$};
\node [label=below:{$\bom_3$}] (3) at (-5.5, -2) {$\bullet$};
\node [label={[xshift=-0.4cm, yshift=-0.5cm]$\bom_4$}] (4) at (-4, 0.5) {$\bullet$};
\node [label=below:{$\bom_5$}] (5) at (-6, 3.25) {$\bullet$};
\node [label=below:{$\bom_6$}] (6) at (-7, 2.5) {$\bullet$};
\node [label=below:{$0$}] (7) at (4, 0) {$\bullet$};
\node [label=below:{$v_1$}] (8) at (2, 0.75) {$\bullet$};             
\node [label=below:{$v_2$}] (9) at (4.75, -1) {$\bullet$};
\node [label=below:{$v_3$}] (10) at (5.75, -1.5) {$\bullet$};
\node [label={[xshift=-0.4cm, yshift=-0.5cm]$v_4$}] (11) at (7.25, 0.75) {$\bullet$};
\node [label=below:{$v_5$}] (12) at (4, 3.5) {$\bullet$};
\node [label=below:{$v_6$}] (13) at (3, 2.5) {$\bullet$};
\node (14) at (-6.75, 0.5) {};
\node (15) at (-6.75, 0.5) {};
\node (16) at (-5.3, 3) {};
\node (17) at (2.4, 3) {};
\node (18) at (1.25, -3) {};
\node (19) at (-5.5, -3) {};
\node (20) at (-1.75, 4) {$\Log_{\bom_0}$};
\node (21) at (-1.95, -3.95) {$\Exp_{\bom_0}$};
\node [red,label={[xshift=0.3cm, yshift=-0.2cm]$\red{\widetilde{\bom}}$}] (23) at (-9.76, 1.26) {$\bullet$};
\node [red,label={[xshift=0.3cm, yshift=-0.2cm]$\red{\widetilde{v}}$}] (24) at (8.1, -0.84) {$\bullet$};
\node (25) at (6.5, 4.14) {\red{$\Log_{\bom_{0}}\circ \Exp_{\bom_{0}}(\widetilde{v})\neq \widetilde{v}$}};
\draw [->,>=latex,bend left=15] (16.center) to (17.center);
\draw [->,>=latex,bend left=15] (18.center) to (19.center);
\draw[->,>=latex] (-7,0)--(-9.7,-2.53) node[yshift=0.5cm,midway] {$\pi/2$};
\draw (-7,0) circle(3.7);
\draw (4,0) circle(3.7);
\draw [line width=1pt] plot [smooth] coordinates {(-8,1) (-7,0) (-6.5, -1.25) (-5.5, -2) (-3.88, -1.34) (-4, 0.5) (-6, 3.25) (-7, 2.5)};
\draw [red,line width=1pt] plot [smooth] coordinates {(-8,1) (-7.26, 0.72) (-7,0) (-7.02, -0.8) (-6.5, -1.25) (-5.5, -2) (-4.07,-2.26)};
\draw [red,line width=1pt] plot [smooth] coordinates {(-3.31,-0.28) (-4, 0.5) (-5.1,1.86) (-6, 3.25) (-6.74,3.3) (-7, 2.5)};
\draw [red,line width=1pt] plot [smooth] coordinates {(-9.93,2.26) (-9.76, 1.26) (-10.69, 0.28)};
\draw[->,>=latex] (4,0)--(0.99,-2.16) node[yshift=-0.7cm,midway] {$\theta_1=\pi/2$};
\draw (-1,4.5)--(-1,-4.5)--(9,-4.5)--(9,4.5)--cycle;
\draw (-7,-4.3) node {North hemisphere};
\draw (7,-4) node {Tangent plane $\RR^2$};
\draw [line width=1pt] plot [smooth] coordinates {(2, 0.75) (4, 0) (4.75, -1) (5.75, -1.5) (7.16,-0.92) (7.25, 0.75) (5.78, 2.82) (4, 3.5) (3, 2.5)};
\draw [red,line width=1pt] plot [smooth] coordinates {(2, 0.75) (2.82, 0.24) (4, 0) (4.75, -1) (5.75, -1.5) (7.37,-1.52)};
\draw [red,dashed,line width=1pt] plot [smooth] coordinates {(7.37,-1.52) (8.1, -0.84) (7.7,-0.06)};
\draw [red,line width=1pt] plot [smooth] coordinates {(7.7,-0.06) (7.25, 0.75) (5.56, 2.6) (4, 3.5) (3.2, 3.26) (3, 2.5)};
\draw [line width=1pt](-10,-6)--(-9,-6) node[right] {Real curve};
\draw [red,line width=1pt](-10,-6.8)--(-9,-6.8) node[right] {Interpolated curve (disconnected)};
\draw [line width=1pt](-1,-6)--(0,-6) node[right] {Image of real curve from $\Log_{\bom_{0}}$ map};
\draw [red,line width=1pt](-1,-6.8)--(0,-6.8) node[right] {Interpolated curve};
\draw [red,line width=1pt,dashed](-1,-7.6)--(0,-7.6) node[right] {Loose of injectivity};
\draw [red,->,>=latex,bend left=15] (25.south) to (24.north);
\end{tikzpicture}
\end{center}
\caption{Loss of injectivity of the exponential map}
\label{fig:Interpolation_Manifold}
\end{figure}
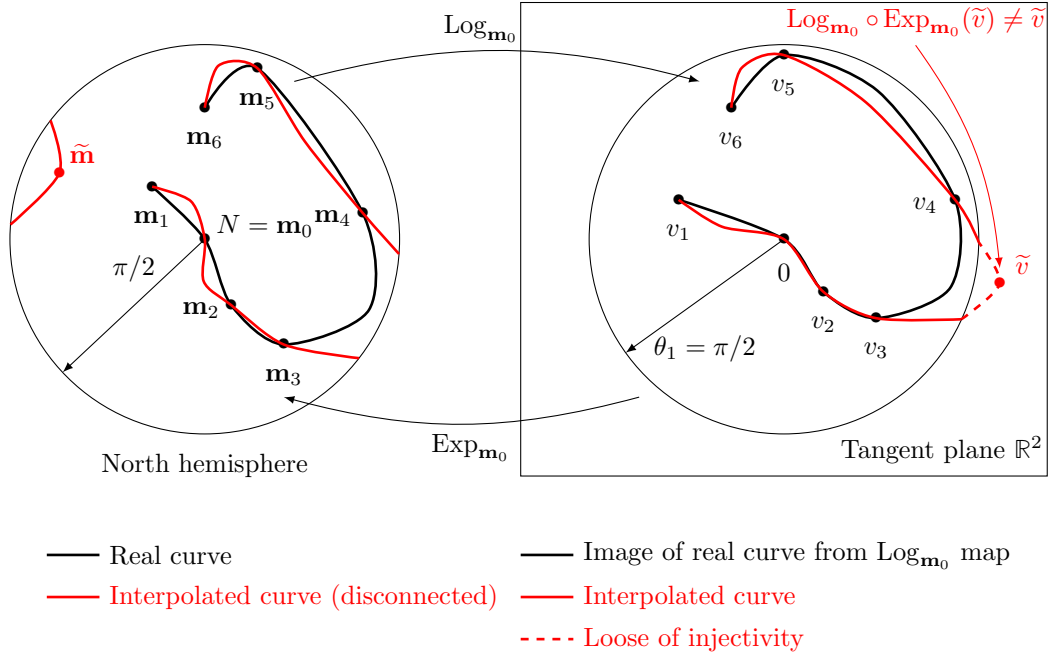

\subsection{Interpolation algorithm from Lagrange polynomials}\label{subsec:Interpol_Lagrangian}

Le us now produce the algorithm so to obtain an interpolated point $\widetilde{\bom}$ corresponding to a target parameter $\widetilde{\lambda}$. Such an algorithm is directly issued from the seminal work of Amsallem et al.\cite{Amsallem2009}, but it is modified so to obtain a well-defined interpolation, as we have to consider conditions (C1) and (C2) from the previous~\autoref{subsec:Assumption_Local_Chart}.

As detailed in~\autoref{sec:POD}, the POD of mode $p$ which was done on the snaphsot matrices $\bS_i$ (corresponding to the parameter $\lambda_{i}$ for $i=1,\dots,N$) define points $\bom_1,\dots,\bom_{N}$ on the Grassmann manifold $\mathcal{G}(p,n)$, and thus matrices in $\VecMat{n}{p}$ with orthonormal column vectors.

\begin{algo}[Interpolation on a Grassman manifold $\mathcal{G}(p,n)$]\label{alg:Interpol_Lagrang}\mbox{}\\
	\vspace*{-0.5cm}
	\begin{description}
		\item[\textbf{Input}]:
		\begin{itemize}
			\item Integers $p,n$ such that $2p\leq n$.
			\item Matrices $\bY_1,\dots,\bY_N$ in $\VecMat{n}{p}$ such that $\bY_{i}^{T}\bY_i=\bI_{p}$, respectively corresponding to parameters $\lambda_{1},\dotsc,\lambda_{N}$
			\item A target parameter $\widetilde{\lambda}$
		\end{itemize}
		\item[\textbf{Output}]: A new matrix $\widetilde{\bY}$ defining a new point $\widetilde{\bom}\in \mathcal{G}(p,n)$, corresponding to the target parameter $\widetilde{\lambda}$.
		\item[\textbf{Computations}]:
		\begin{enumerate}
			\item Choose a matrix $\bY_{0}\in \{\bY_1,\dots,\bY_N\}$ such that
			\begin{equation*}
			\text{(C1) stability}:\quad \bY_{0}^{T}\bY_i \text{ is non singular for all } i  
			\end{equation*}
			\item For each $i=1,\dots,N$, make a thin SVD and compute an $n\times p$ matrix $\bZ_i$:
			\begin{align*}
			\bY_{i}\left(\bY_{0}^{T}\bY_{i}\right)^{-1}-\bY_{0}&=\bU_{i}\bSig_{i} \bV_{i}^{T} \\
			\bZ_{i}:=\bU_{i}\arctan\left(\bSig_{i}\right) \bV_{i}^{T},
			\end{align*}
			all issued from the logarithm map (Definition~\ref{def:Log_map_Grass}).
			\item Compute an interpolated matrix and a thin SVD
			\begin{equation*}
			\widetilde{\bZ}:=\sum_{i=1}^{N} \prod_{i \neq j} \frac{\tilde{\lambda}-\lambda_j}{\lambda_i-\lambda_j}\bZ_i=\widetilde{\bU}\widetilde{\bThe}\widetilde{\bV}
			\end{equation*}
			\item (C2) stability: If $\widetilde{\theta}_1>\pi/2$, with $\widetilde{\theta}_1$ the largest singular value of $\widetilde{\bZ}$, then return an \emph{instability message}.
			\item Otherwise return the $n\times p$ matrix
			\begin{equation*}
			\widetilde{\bY}:=\bY_0\widetilde{\bV}\cos\widetilde{\bThe}+\widetilde{\bU}\sin\widetilde{\bThe}
			\end{equation*}
			issued from the exponential map~\ref{eq:Def_Exp_Map}.
		\end{enumerate}
	\end{description}
\end{algo}

\subsection{Instability problem due to increasing mode}\label{subec:Geometric_Distances}

As one should expect, the accuracy of the interpolation algorithm~\ref{alg:Interpol_Lagrang} should improve as the number $p$ of mode increase. In fact, when considering the snapshot matrices $\bS_1,\dots,\bS_N$ associated to the parameters $\lambda_{1},\dots,\lambda_{N}$, a POD of mode $p$ define subspaces $\mathcal{V}_1,\dots,\mathcal{V}_N$ of dimension $p$ (see~\autoref{sec:POD}). By construction, for another mode $p'>p$, the corresponding subspaces  $\mathcal{V}'_1,\dots,\mathcal{V}'_N$ are such that
\begin{equation*}
\mathcal{V}_i\subset \mathcal{V}_i'.
\end{equation*}
Take now a new parameter $\widetilde{\lambda}$ and suppose that algorithm~\ref{alg:Interpol_Lagrang} returns matrices $\widetilde{\bY}$ and $\widetilde{\bY}'$ which correspond respectively to mode $p$ and $p'>p$ interpolation. A stability condition should be
\begin{itemize}
	\item[$\bullet$] (C3) The subspaces $\widetilde{\mathcal{V}}$ and $\widetilde{\mathcal{V}}'$ respectively associated to the matrices $\widetilde{\bY}$ and $\widetilde{\bY}'$ are such that $\widetilde{\mathcal{V}}\subset \widetilde{\mathcal{V}}'$.
\end{itemize}
More generally, let us consider two subspaces $\mathcal{V}$ and $\mathcal{V}'$ of different dimensions $p<p'$, represented by matrices $\bY\in \VecMat{n}{p}$ and $\bY'\in \VecMat{n}{p'}$ such that
\begin{equation*}
\bY^{T}\bY=\mathbf{I}_p,\quad (\bY')^{T}\bY'=\mathbf{I}_{p'}.
\end{equation*}
One method to measure the non-inclusion defect between subspaces $\mathcal{V}$ and $\mathcal{V}'$ is to consider the \emph{geometric distance} $\delta(\mathcal{V},\mathcal{V}')$, issued from~\cite{Ye2016}, and defined using \emph{principal angles} as follows: taking singular values of $\bY^{T}\bY'\in \VecMat{p}{p'}$ to be $\sigma_{1}\geq \dots \geq \sigma_p\geq 0$, we have
\begin{equation}\label{eq:Geom_Distance}
\delta(\mathcal{V},\mathcal{V}')=\delta(\bY,\bY'):=\bigg(\sum_{i=1}^{\text{min}(p,p')} \arccos^{2}(\sigma_{i}) \bigg)^{1/2}.
\end{equation}

We are finally able to check stability condition (C3) using the following:
\begin{enumerate}
	\item  Assume  a set of POD modes $p \in \mathscr{P}_m$ and a threshold value $T_V$.
	\item For a given integer $p$ and a given target parameter $\widetilde{\lambda}$, compute matrix $\widetilde{\bY}$ issued from Algorithm~\ref{alg:Interpol_Lagrang}.
	\item For $p'>p$ compute matrix $\widetilde{\bY}'$ issued from the same algorithm Algorithm~\ref{alg:Interpol_Lagrang}.
	\item As we have $\widetilde{\bY}^{T}\widetilde{\bY}=\mathbf{I}_p$ and $(\widetilde{\bY}')^{T}\widetilde{\bY}'=\mathbf{I}_{p'}$ from Lemma~\ref{lem:Geod_Inside_Stiefel_Compact}, we deduce a geometric distance $\delta(\widetilde{\bY},\widetilde{\bY}')$ computed by~\eqref{eq:Geom_Distance}.
	\item Calculate 
	\begin{equation}\label{eq:Threshold_Stab_C3}
		\epsilon = (\delta_{\text{max}}(\widetilde{\bY},\widetilde{\bY}') - \delta_{\text{min}}(\widetilde{\bY},\widetilde{\bY}'))/(\delta_{\text{min}}(\widetilde{\bY},\widetilde{\bY}')),\quad p \in \mathscr{P}_m
	\end{equation}	
	\item If $\epsilon \geq T_V$ then return an instability message.
\end{enumerate}

Let us now describe the utilization of the (C3) stability condition from the application point of view. By computing the geometric distance $\delta(\widetilde{\bY},\widetilde{\bY}')$ we are able to explain the non-monotonic oscillatory behavior of the error norm due to increasing mode $p$. According to both situation under study, the first benchmark problem (see Figure~\ref{fig:Distance_subspaces_different_dimensions_pbl_1}) shows a clear oscillatory behavior, while the second one seems stable (see Figure~\ref{fig:Distance_subspaces_different_dimensions_pbl_2}): we thus compared the two $\epsilon$ values given by~\eqref{eq:Threshold_Stab_C3}, for each benchmark problem, and propose $T_V=100$ as a reference threshold.

\section{Application to Hyperelasticity}\label{sec:App_HyperElast}
\subsection{Kinematics of Continuum Mechanics Framework}

Let $\Omega_{0} \subset R^3$ and $\Omega \subset R^3$ represent the reference and the current configurations of a body, parameterized  in $\mathbf{X}$ and in $\mathbf{x}$, respectively. The non-linear deformation map $\varphi: \Omega_{0} \rightarrow \Omega$ at time $t$, transforms the referential (material) position $\mathbf{X}$ into the related current (spacial) position $\mathbf{x}=\varphi(\mathbf{X},t)$. The deformation gradient $\mathbf{F}$ is defined by

\begin{equation}
\mathbf{F} := \nabla \varphi(\mathbf{X}) = \frac{\partial \varphi( \mathbf{X})}{\partial \mathbf{X}} = \frac{\partial \mathbf{x}}{\partial \mathbf{X}}
\end{equation}  
with the Jacobian $J(\mathbf{X}) = \det(\mathbf{F}) > 0$ (volume ratio).  The right and left Cauchy-Green tensors are defined as $\mathbf{C} = \mathbf{F}^T \mathbf{F}$ and $\mathbf{B} = \mathbf{F} \mathbf{F}^T$, respectively. 

The three principal invariants of $\mathbf{C}$ which are identical to those of $\mathbf{B}$ are defined as
\begin{equation}
I_1 = \tr(\mathbf{C}), \quad I_2 = \frac{1}{2} [(\tr(\mathbf{C}))^2 - \tr \left(\mathbf{C^2}\right)], \quad I_3 = \text{det} (\mathbf{C}).
\label{eq:Invariants_1-3}
\end{equation}  

\subsection{Incompressible Transverse Isotropic Material}

A material with one family of fibers is considered where the stress at a material point depends not only on the deformation gradient $\mathbf{F}$ but also on the fiber direction. The fibers are modeled by a \emph{flow}~\cite{Gallot1990} obtained from some unit vector field  $\mathbf{a}_0$ on $\Omega_{0}$. The direction of a fiber at point $\mathbf{X} \in \Omega_{0}$ is thus obtained by the unit vector $\mathbf{a}_0 (\mathbf{X}), \, | \mathbf{a}_0 |=1$. 

Note that the unit vector field $\mathbf{a}_0$ induces a unit vector field $\mathbf{a}$ on current configuration $\Omega$ defined by 
\begin{equation*}
\mathbf{F}(\mathbf{X})\mathbf{a}_0(\mathbf{X})=\alpha \mathbf{a}(\mathbf{x})
\end{equation*} 
where the length changes of the fibers along its direction $\mathbf{a}_0$ is determined by the stretch $\alpha$ as the ratio between the current and the reference configuration.

Consequently, since $| \mathbf{a} |=1$, we can define the square of the stretch $\alpha$ following the symmetries of the deformation gradient
\begin{equation*}
\alpha^2 = \mathbf{a}_0\mathbf{F}^T \mathbf{F} \mathbf{a}_0 = \mathbf{a}_0\mathbf{C} \mathbf{a}_0. 
\end{equation*}

\subsection{Linearization of the principle of internal virtual work in the spatial description}

The linearization of the internal virtual work in the spatial description reads (see Section 8.4 in~\cite{Holzapfel2002})
\begin{equation}
D_{\Delta \mathbf{u}} \delta W_{int}(\mathbf{u}, \delta \mathbf{u}) =
\int_{\Omega}^{}  (\text{grad} \delta \mathbf{u} : \mathbb{c} : \text{grad} \Delta \mathbf{u} + 
\text{grad} \delta \mathbf{u} : \text{grad} \Delta \mathbf{u} \: \boldsymbol{\sigma}) dv
\label{eq:Linear_Int_work_spatial}
\end{equation} 
or in index notation (with Einstein convention on repeated indices),
\begin{equation}
D_{\Delta \mathbf{u}} \delta W_{int}(\mathbf{u}, \delta \mathbf{u}) =
\int_{\Omega}^{} \frac{\partial \delta u_a}{\partial x_b} 
(\delta_{ac} \sigma_{bd} + \mathbb{c}_{abcd}) \frac{\partial \Delta u_c}{\partial x_d} dv
\label{eq:Linear_Int_work_spatial_index}
\end{equation} 
where the term $\delta_{ac} \sigma_{bd} + \mathbb{c}_{abcd}$ is the effective elasticity tensor in the spatial description. The term $\delta_{ac} \sigma_{bd}$ corresponds to the geometrical stress contribution to  linearization (initial stress contribution at every increment) whereas $\mathbb{c}_{abcd}$ represents the material contribution to linearization. The elasticity tensor $\mathbb{c}_{abcd}$ in the spatial description is derived from the push-forward of the linearized second Piola-Kirchhoff stress tensor which yields the linearized Kirchhoff stress tensor $ \Delta \boldsymbol{\tau}$ from relation

\begin{equation}
\Delta \boldsymbol{\tau}= 
J  \mathbb{c} : \text{grad} \Delta \mathbf{u}
\label{eq:PF_Linear_S_index_2}
\end{equation}

Replacing the direction $\Delta \mathbf{u}$ of the directional derivative with the velocity vector $\mathbf{v}$, $\Delta \boldsymbol{\tau}$ and $\text{grad} \Delta  \mathbf{u}$ result in the Lie time derivative $\mathcal{L}_{v}(\boldsymbol{\tau})$ of $\boldsymbol{\tau}$ and the spatial velocity gradient $\mathbf{l}= \mathbf{\dot{F}} \mathbf{F}^{-1}$, respectively. Again, using the minor symmetries of $\mathbb{c}$, the following relation can be written

\begin{equation}
\mathcal{L}_{v}(\boldsymbol{\tau}) = 
\text{Oldr}(\boldsymbol{\tau}) = 
\boldsymbol{\dot{\tau}}- 
\mathbf{l} \boldsymbol{\tau} -
\boldsymbol{\tau} \mathbf{l}^T =
J \mathbb{c} : \mathbf{d}
\label{eq:Oldroyd_stress_rate}
\end{equation}
where $\text{Oldr}(\boldsymbol{\tau})$ denotes the objective Oldroyd stress rate (convected rate) of the contravariant Kirchhoff stress tensor $\boldsymbol{\tau}$ and $\mathbf{d}=\text{sym}(\mathbf{l})$ (symmetric part of $\mathbf{l}$) the rate of the deformation tensor. At this point we have to recall that for structural elements (shells, membranes, beams, trusses) Abaqus/Standard uses the elasticity tensor related to the Green-Naghdi objective rate. The detailed constitutive model used here is given in~\cite{Holzapfel2007}. 

\section{Numerical Investigations}
\label{sec:Numerical_Investigations}

The objective of this section is to assess the stability and accuracy of POD basis interpolation on Grassmann manifolds using two examples of hyperelastic structures. 

\subsection{Abaqus implementation of POD-ROM approximations}
\label{sec:multi_point_constraint_equations}

To implement a ROM for FEM analysis, a non-intrusive approach is utilized to insert the interpolated spatial POD basis into a commercial code. Specifically, a ROM is constructed using the multi-point constraint equations in Abaqus~\cite{ABAQUS2014}. A linear multi-point constraint requires that a linear combination of nodal variables is equal to zero:
	
\begin{equation}
A_1 u^{P}_{i} + A_2 u^{Q}_{j} + \dots + A_N u^{R}_{k} = 0
\label{eq:Linear constraint equations}
\end{equation} 
	where $u^{P}_{i}$ is the nodal variable at node $P$, degree of freedom $i$ and	$A_i, (i=1,\dots N)$ are coefficients that define the relative motion of the nodes. In Abaqus/Standard the first nodal variable specified ($u^{P}_{i}$ corresponding to $A_1$) will be eliminated to impose the constraint. In addition, the coefficient $A_1$ should not be set to zero. For the construction of a ROM, $p$ reference points  are created corresponding to the total number of POD modes (arbitrary positioned in space). These reference points are used to define the constraint equations for introducing the spatial POD modes and to assign the extra degrees of freedom corresponding to the unknown `time' variables. Thus, the interpolated spatial basis
	$\boldsymbol{\tilde{\Phi}}_p:=[\tilde{\phi}_{1},\dots,\tilde{\phi}_{p}]\in \VecMat{n}{p}$ representing the subspace 
	 $\tilde{\bom}:=\spn(\tilde{\phi}_{1},\dotsc,\tilde{\phi}_{p})$ on $\mathcal{G}(p,n)$ is imposed to the linear constraint equations as follows:
\begin{equation}
u(x_l,t,\tilde{\lambda}) - \sum_{h=1}^{p} \tilde{\phi}_h (x_l) \psi_h(t) = 0
\label{eq:Linear constraint equations_POD}
\end{equation} 
where $x_l,(l=1,\dots,N_s)$ is related to the nodal point positions, $\tilde{\phi}_h(x_l)$ represent the associated spatial POD $h$-mode for $x_l$, and  $\psi_h(t)$ is the `time' variable assigned to the reference point $h$ that has to be determined. Note also that the system of equations defined in~\eqref{eq:Linear constraint equations_POD} has to be generated for each degree of freedom. 

\rem{In fact this is not a standard POD-Galerkin approach since we are not projecting the linearized system of equations onto the interpolated spatial POD basis. But it serves us to assess the stability and accuracy of the ROM FEM model which is constructed by the interpolated POD basis. We mention that generating an efficient ROM model is not the
objective of this work.}

\subsection{Inflation of a spherical balloon}\label{subsec:Spherical_Balloon}

The first pMOR benchmark example concerns the inflation of a spherical balloon considering the material anisotropy defined by the fiber orientation angle as a parameter. The sphere has an initial radius of $R=10$, thickness $h=0.5$ and is loaded by an internal hydrostatic pressure of $P=40$ (no units). The FEM analysis is performed on an octant $\mathrm{S}_0$ of the sphere using plane symmetry boundary conditions, as depicted in Figure~\ref{fig:Geo_BC}, where three radial points $A(R,0,0)$, $B(0,R,0)$ and $C(0,0,R)$ are defined on each axis, respectively. Three-node shell elements (S3R) are used for the mesh~\cite{ABAQUS2014}. A total number of 514 elements are generated with 228 nodes. The hyperelastic constitutive behavior is implemented in Abaqus/Standard with a user-defined subroutine (UMAT)~\cite{ABAQUS2014}. 

\begin{figure}[http]
\centering
\includegraphics[width=.4\linewidth]{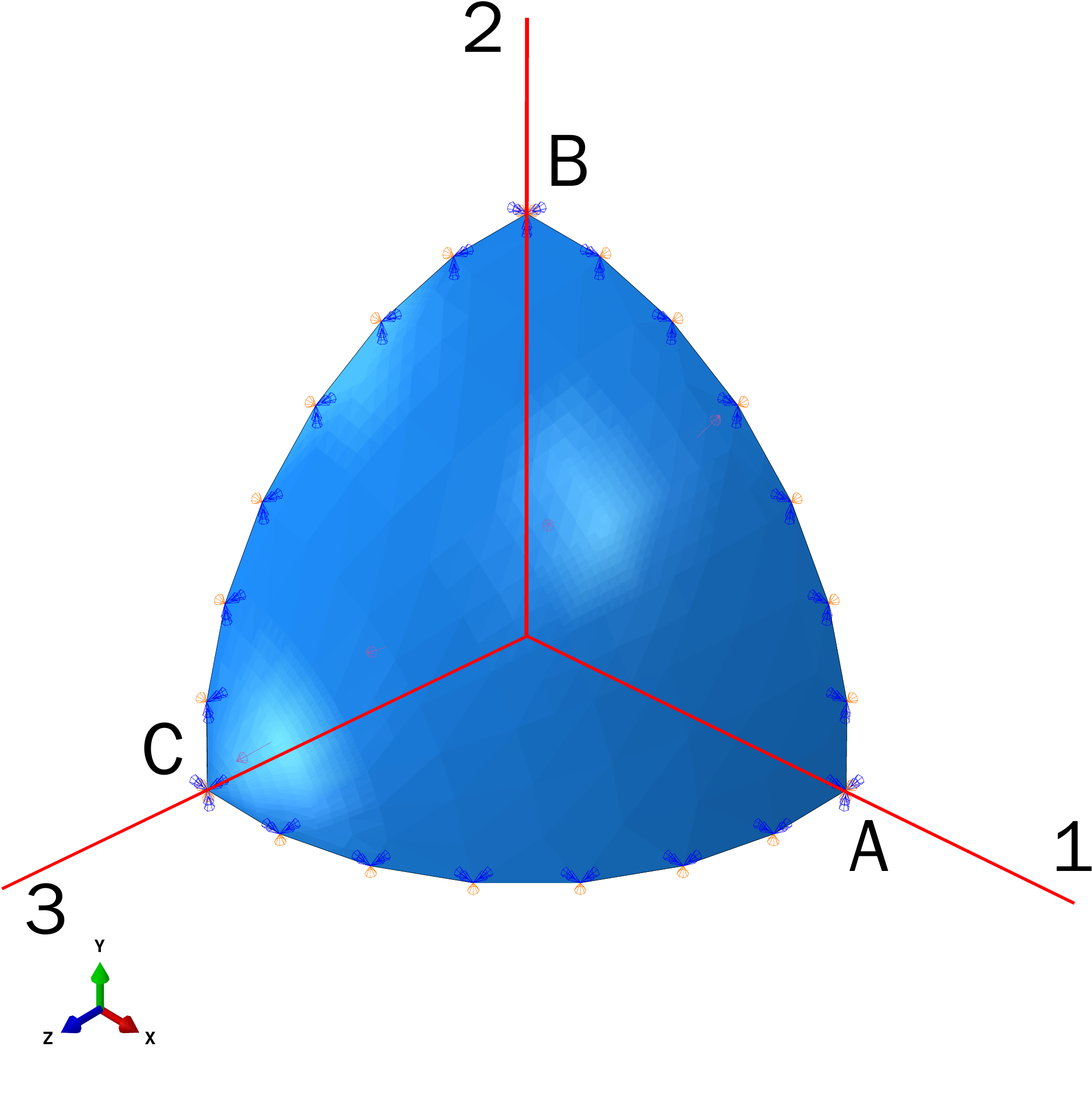}  
\caption{Geometry of an octant $\mathrm{S_0}$ of a spherical balloon made of transversely isotropic hyperelastic material. Three radial points A, B and C are defined on axis 1,2 and 3, respectively; plane symmetry boundary conditions are used.}
\label{fig:Geo_BC}
\end{figure}

\begin{rem}\label{rem:Local_Basis_Octant}
    The fiber orientation has to be defined on each point $M\in \mathrm{S}_0$ using an orthonormal basis of the tangent plane $T_M\mathrm{S}_0$, which has to be specified. 
    
    The choice made in Abaqus is to consider first an outward normal $\mathbf{n}(M)$ to this tangent plane and then a first vector $\mathbf{E}_{1}(M)$ as the orthogonal projection (normalized) of $\mathbf{e}_1:=(1,0,0)$ onto $T_M\mathrm{S}_0$. The second unit vector is the cross product $\mathbf{E}_{2}(M):=\mathbf{n}(M)\wedge \mathbf{E}_{1}(M)$.
\end{rem}

\subsubsection*{Explicit fiber orientations on the octant}
Let make now an explicit definition of the fiber orientations, with parameter some angle $\theta$ using local basis $\mathbf{E}_1(M),\mathbf{E}_2(M)$ of the tangent plane $T_M\mathrm{S}_0$ as explained in Remark~\ref{rem:Local_Basis_Octant}. More specifically, take
\begin{equation*}
    M=(\cos(u)\sin(v),\sin(u)\sin(v),\cos(v))\in \mathrm{S}_0,\quad (u,v)\in \left]0;\frac{\pi}{2}\right[\times \left]0;\frac{\pi}{2}\right[
\end{equation*}
and then define
\begin{align*}
    \mathbf{E}_1(M)&:=\frac{\mathbf{X}^h}{\|\mathbf{X}^h\| },\quad 
    \mathbf{X}^h:=\begin{pmatrix}
1-\cos^2(u)\sin^2(v) \\
-\sin(u)\cos(u)\sin^2(v) \\
-\cos(u)\sin(v)\cos(v)
\end{pmatrix}, \\
\mathbf{E}_2(M)&:=\mathbf{n}(M)\wedge \mathbf{E}_{1}(M).\quad  
\end{align*}
Note here that the vector $\mathbf{X}^h$ corresponds to the orthogonal projection of the vector $(1,0,0)$ onto the tangent plane $T_M\mathrm{S}_0$.

Finally, the unit vector defining the fiber orientation is given by (see Figure~\ref{fig:Fibers_Sphere_Abaqus} for some examples).
\begin{equation*}
    \mathbf{a}_0(\theta):=\cos(\theta)\mathbf{E}_1(M)+\sin(\theta)\mathbf{E}_{2}(M)
\end{equation*}

\begin{figure}[H]
\begin{subfigure}{.5\textwidth}
  \centering
  \includegraphics[scale=0.4]{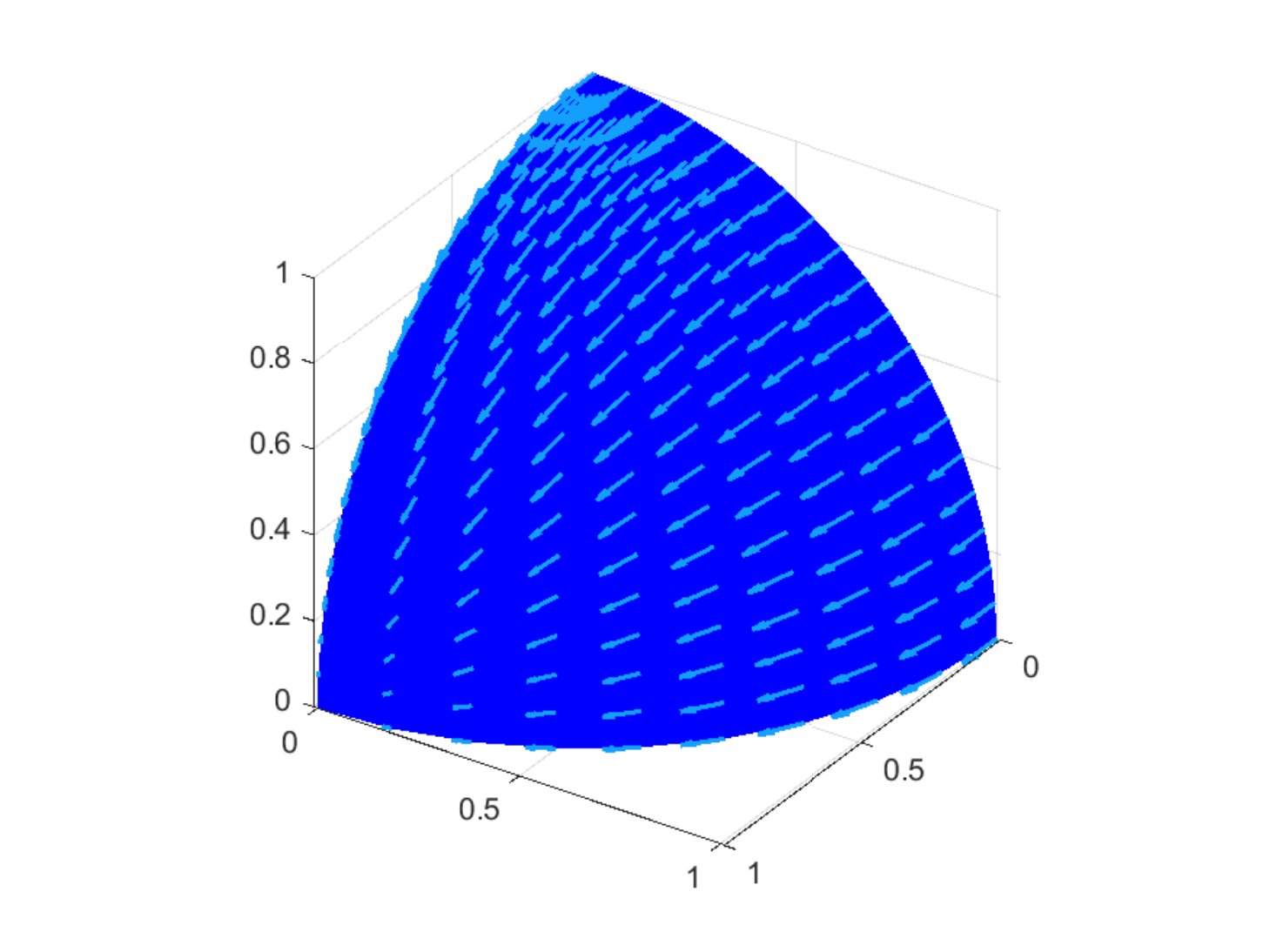}  
  \caption{Fibers orientation with $\theta=0$ degree}
  \label{fig:Fiber_0}
\end{subfigure}
\begin{subfigure}{.5\textwidth}
  \centering
  \includegraphics[scale=0.4]{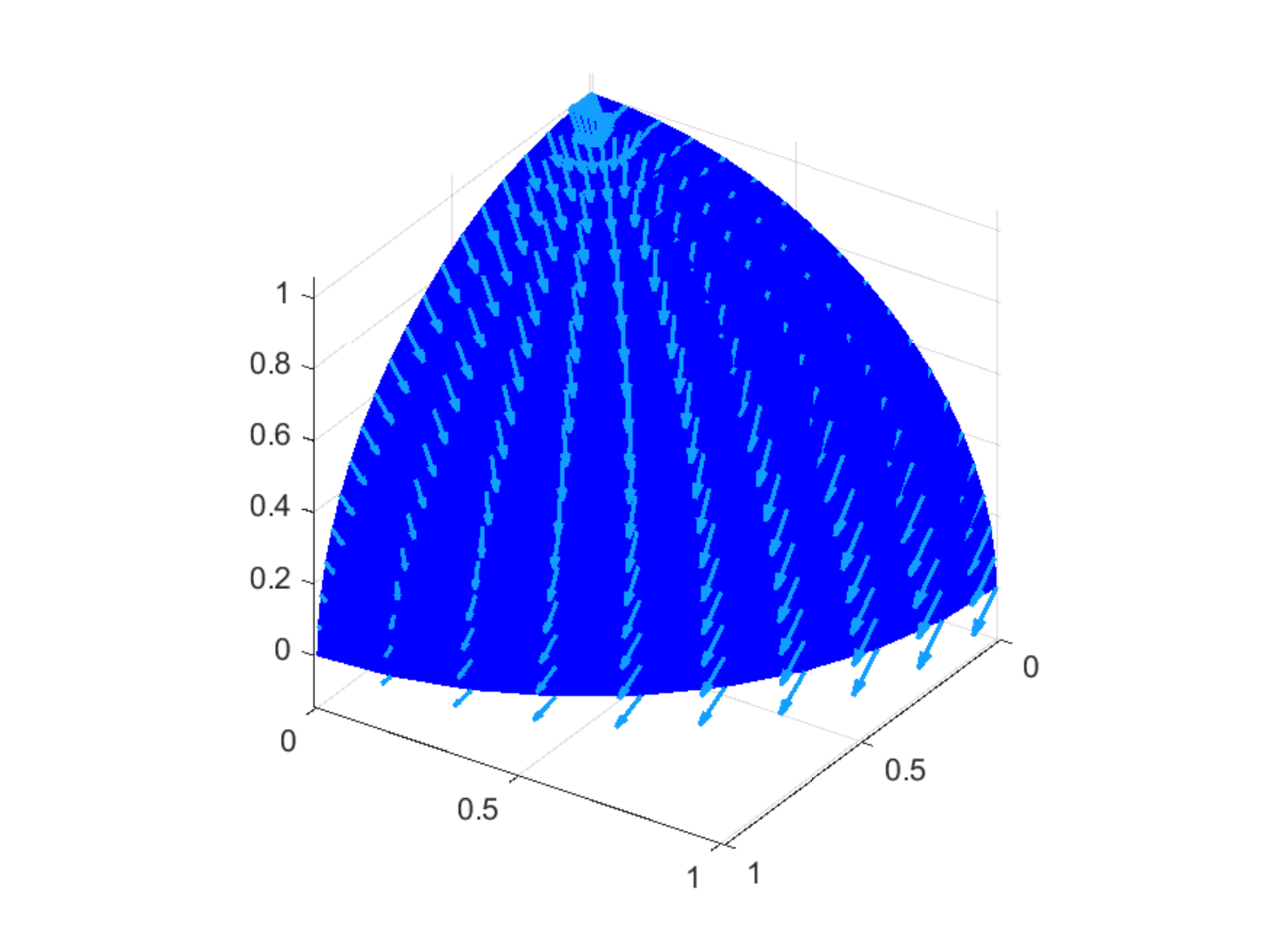}  
  \caption{Fibers orientation with $\theta=45$ degree}
  \label{fig:Fiber_45}
\end{subfigure}
\begin{subfigure}{.5\textwidth}
  \centering
  \includegraphics[scale=0.4]{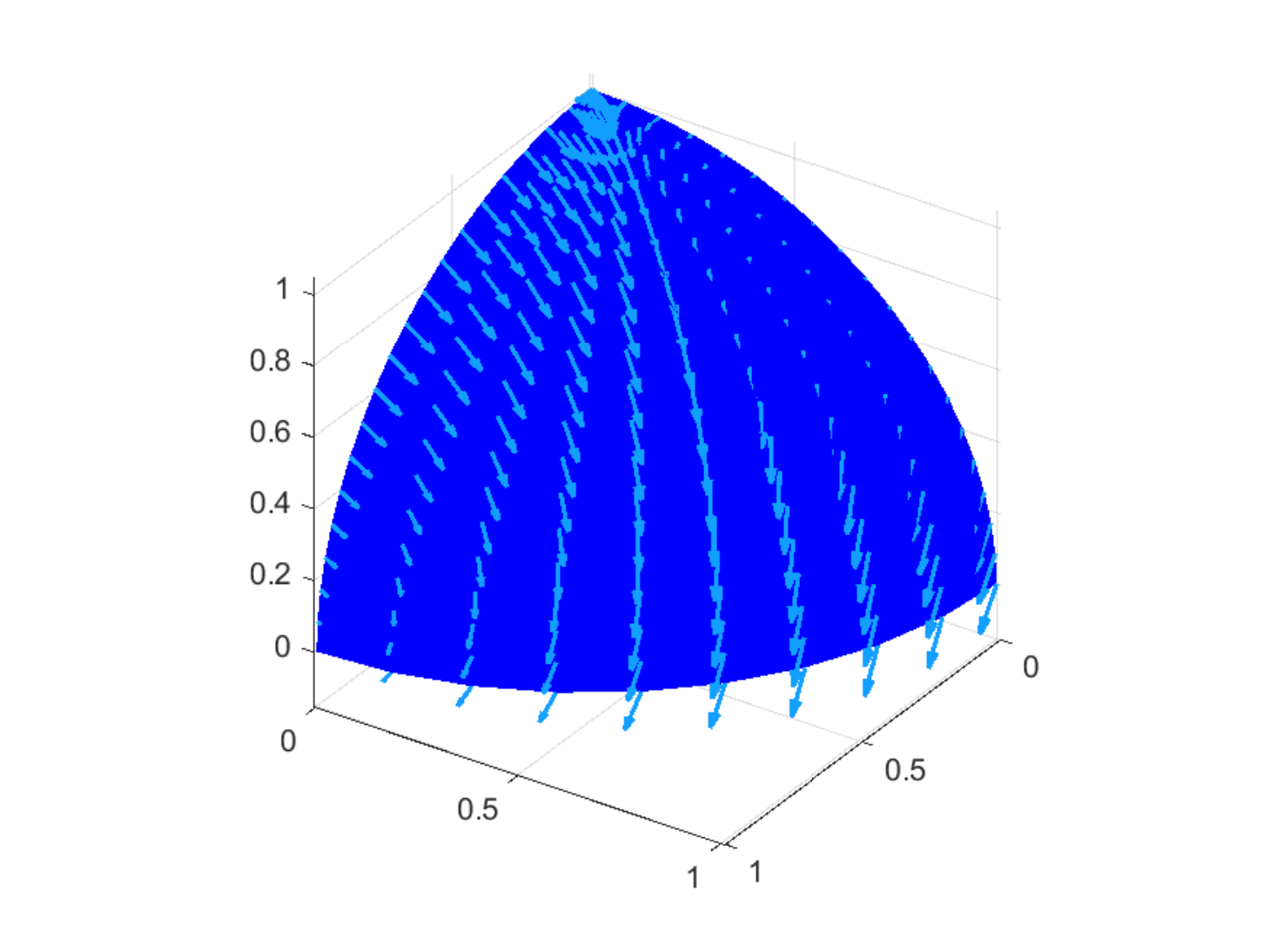}  
  \caption{Fibers orientation with $\theta=60$ degree}
  \label{fig:Fiber_60}
\end{subfigure}
\begin{subfigure}{.5\textwidth}
  \centering
  \includegraphics[scale=0.4]{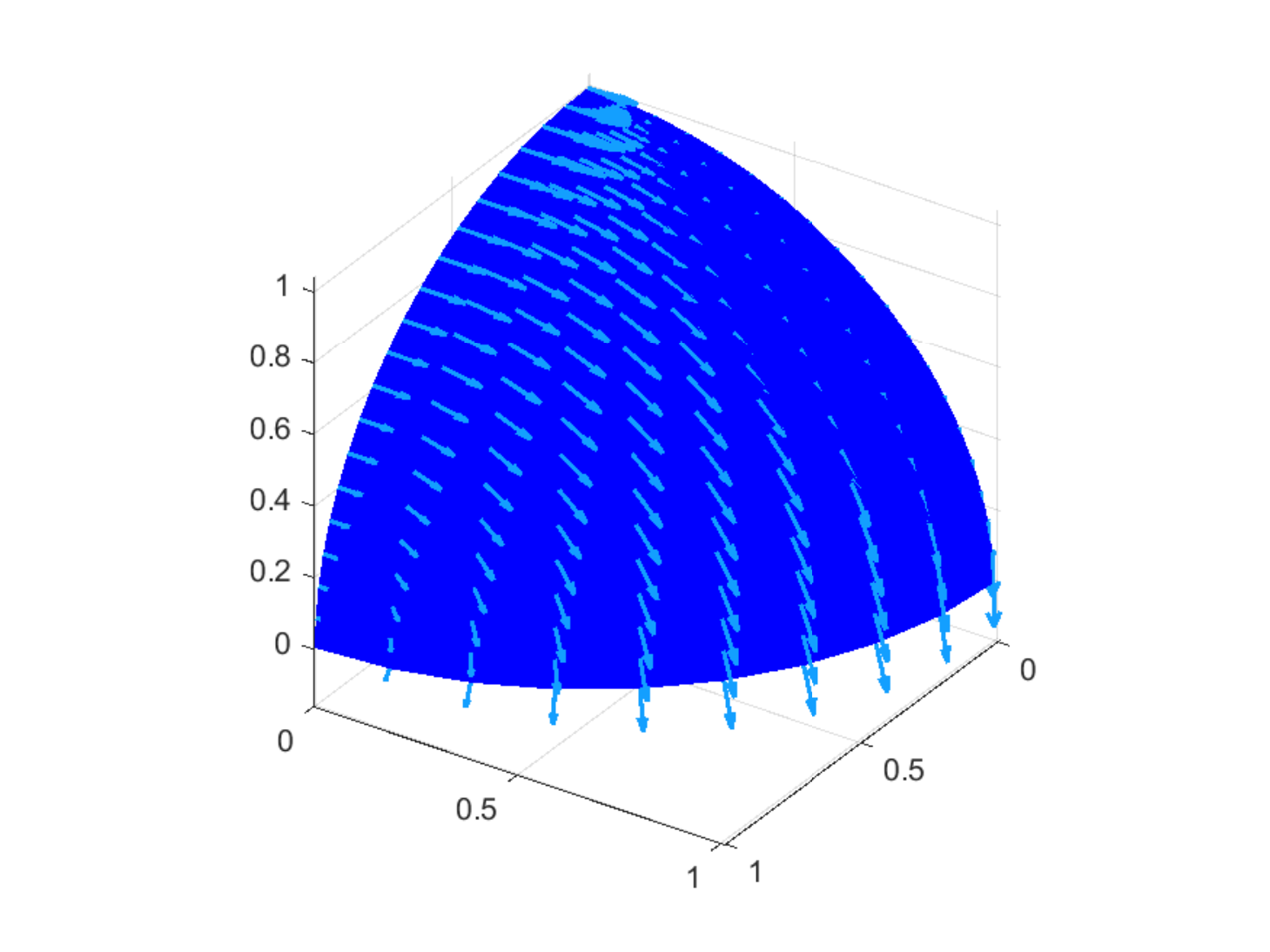}  
  \caption{Fibers orientation with $\theta=90$ degree}
  \label{fig:Fiber_90}
\end{subfigure}
\caption{Different fibers on the sphere.}
\label{fig:Fibers_Sphere_Abaqus}
\end{figure}

\subsubsection*{Model for strain energy function}
For a homogeneous transversely isotropic non-linear material, let consider a free energy function  that depends only on two invariants ($I_1,I_4$)   
\begin{equation*}
\Psi = \Psi\left(I_1(\mathbf{C}), I_4(\mathbf{C},\mathbf{a}_0)\right)
\end{equation*}
where $I_1=\tr(\mathbf{C})$, while
\begin{equation}
I_4 = \mathbf{a}_0\mathbf{C} \mathbf{a}_0,  
\label{eq:Invariants_4}
\end{equation}
is the invariant related to anisotropy. Since we assume incompressibility of the isotropic matrix material, i.e., $I_3=1$, the free energy is enhanced by an indeterminate Lagrange multiplier $p$ which is identified as a reaction pressure 
\begin{equation*}
\Psi = \Psi[I_1(\mathbf{C}),  I_4(\mathbf{C},\mathbf{a}_0)] + p (I_3-1).
\end{equation*}

The specific model used here is developed for membranous or thin shell-like sheets considering a plane stress state throughout the sheet~\cite{Holzapfel2007}. Following the method of Humphrey~\cite{Humphrey1990} which is based on a derivation by Spencer~\cite{Spencer1972}, the strain energy function is defined as
\begin{equation}
\Psi(I_1,I_4) := c_0(\text{exp}(Q)-1),\quad Q := c_1(I_1-3)^2 + c_2(I_4-1)^2
\label{eq:Strain_energy_MV}
\end{equation}
where $c_i,i=0,1,2$ are material parameters defined as: $c_0 = 86.1$, $c_1 = 0.0059$ and $c_2 = 0.031$ (dimensionless). 

\begin{rem}
This model introduces an inherent constitutive coupling between the isotropic and anisotropic material response. In order to avoid non-physical behavior of soft biological tissues, the related strain-energy function must be polyconvex. It can be shown that polyconvexity of a (continuous) strain-energy function implies that the corresponding acoustic tensor is elliptic for all deformations, which means from the physical point of view that only real wave speeds occur; then the material is said to be stable. There exists a vast literature on polyconvexity, a term introduced by Ball~\cite{Ball1976}. In~\eqref{eq:Strain_energy_MV}, the anisotropic term $ c_2(I_4-1)^2$ is activated only when $I_4 \geq 1$ (the actual fiber stretches are greater than unity).

Moreover, as discussed in~\cite{May1998}, the constitutive description based on~\eqref{eq:Strain_energy_MV} is limited to deformations in which the in-plane strains are positive, or tensile and is not able to incorporate the behavior of the structure in compression. Due to the membrane-like geometry of the structure, it is unlikely to support compressive strains without buckling. This limitation extends to the issue of bending stiffness, which is neglected in this model. 
\end{rem}

\subsubsection*{Snapshot matrices and error norms}

In what follows, the training points  corresponding to the fiber orientation angle $\theta$ will be noted with parameter $\lambda$ for convenience with the previous sections. FEM simulations are performed in Abaqus/Standard for the points $\lambda_i \in \Lambda_s = \{0,45,50,60,85,90\}$.
Note that the spherical balloon changes from a pumpkin (Figure~\ref{fig:Ballon_inflation_modes}(a)) to rugby shaped  (Figure~\ref{fig:Ballon_inflation_modes}(d)) for  $\lambda=0$ and $\lambda=90$, respectively. Observe in Figure~\ref{fig:Fibers_Sphere_Abaqus} that the fiber orientation on the sphere is far from being trivial for $\theta \in ]0;90[$.

The target point for interpolation is set to $\widetilde{\lambda}=75$. Thus, it is natural to constraint the training set to $\Lambda_t =\{50,60,85,90\}$  (see Figure~\ref{fig:Ballon_inflation_modes} for some FEM results). We note that the target point $\tilde{\lambda} = 75$ represents a worst case scenario for assessing the interpolation accuracy since it is spaced nearly at the maximum distance between the adjacent training points $\lambda = 60$ and $\lambda = 85$. Additionally, another reason for this choice is the remarkable shape transition of the spherical balloon inflation in this range of fibration angles as can been seen from Figure~\ref{fig:Ballon_inflation_modes}(b) and Figure~\ref{fig:Ballon_inflation_modes}(c), respectively.  
Hence, this selection gives an upper bound of the interpolation accuracy over the considered parametric range.

For each parametric simulation, a sequence of uniform time snapshots is extracted from the model database. From the discretization of the space-time fields (displacement/rotation),  the snapshot matrices $\bS(\lambda_i)$ of size $(n=1728) \times (N_t=1000)$ are formed. The eigenvalue spectrum of the matrices $\bS(\lambda_i)$ corresponding to training points $\lambda_i \in \Lambda_t$ is shown in a log-log scale in Figure~\ref{fig:Sing_values_magnitude_infl_ballon}. The condition number of the matrices is of the order of $1.0e+10$. Notice that the distance between the first and the second eigenvalue is of two orders of magnitude. 

\begin{figure}[http]
\begin{subfigure}[t]{0.49\textwidth}
  \centering
  \includegraphics[width=\linewidth]{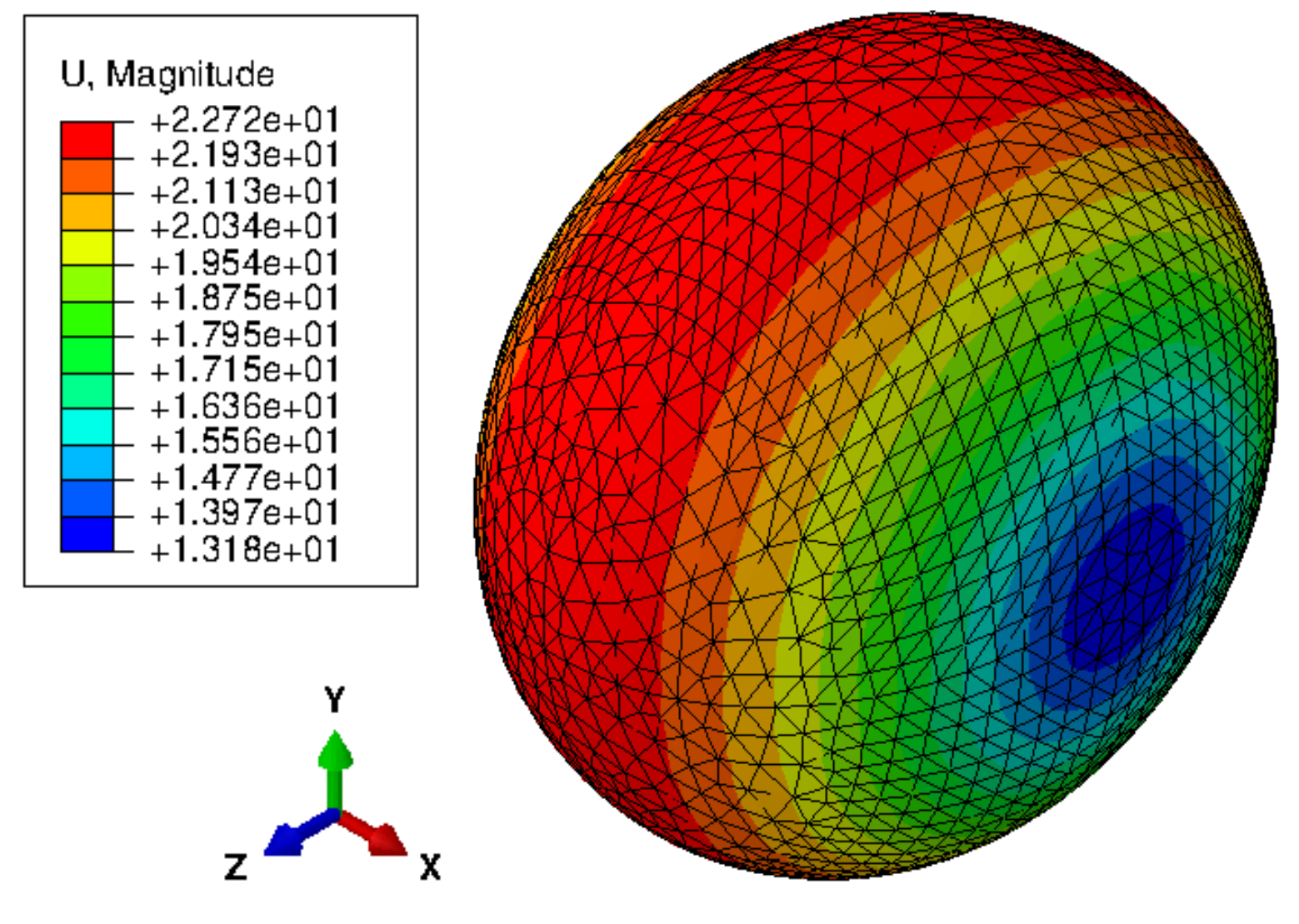}
    \caption{For $\theta = 0^{\circ}$}
  \label{fig:Inflation_0}
\end{subfigure}
\begin{subfigure}[t]{0.49\textwidth}
  \centering
 \includegraphics[width=\linewidth]{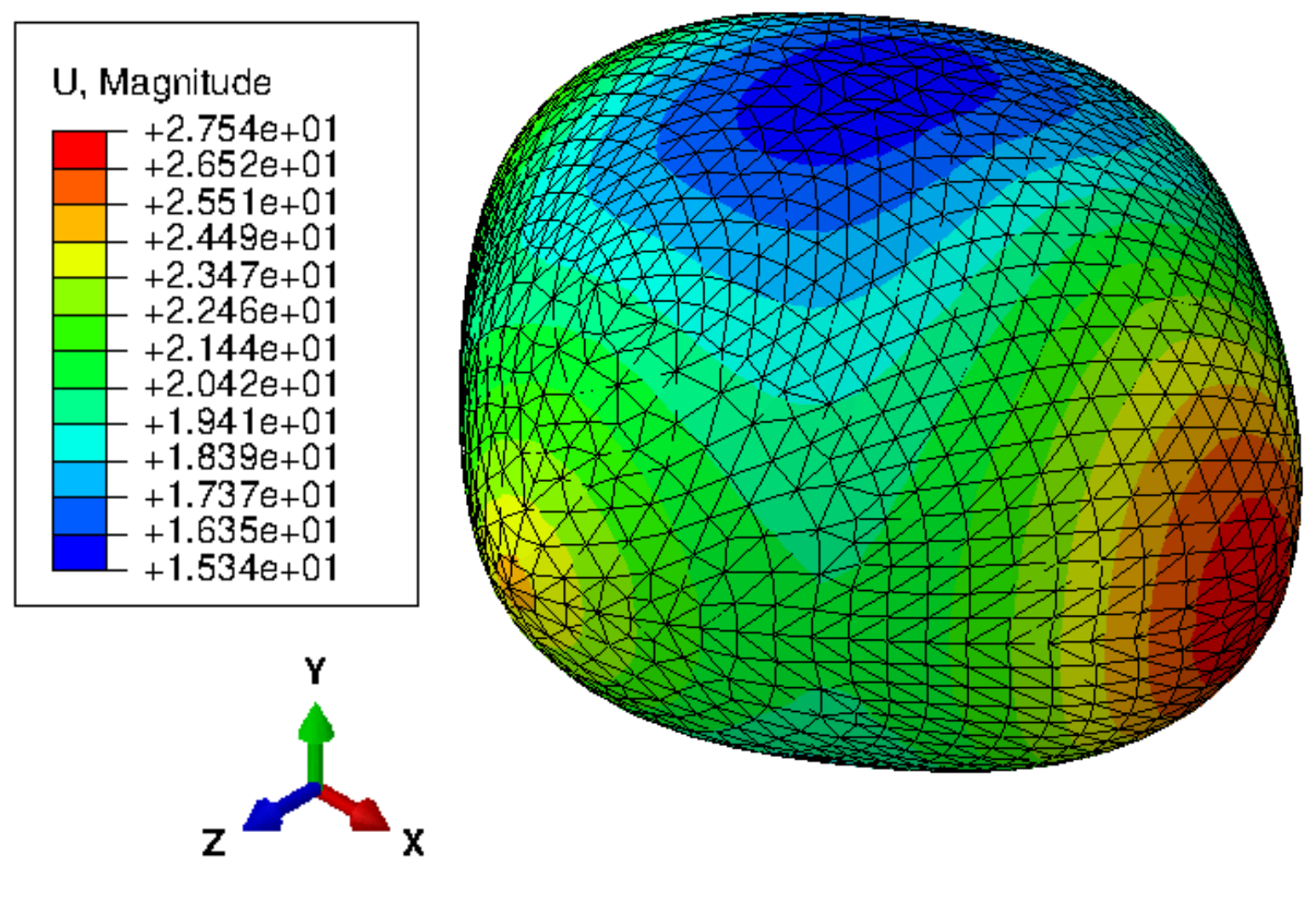}
    \caption{For $\theta = 60^{\circ}$}
  \label{fig:Inflation_60}
\end{subfigure} \\
\begin{subfigure}[t]{0.49\textwidth}
  \centering
 \includegraphics[width=\linewidth]{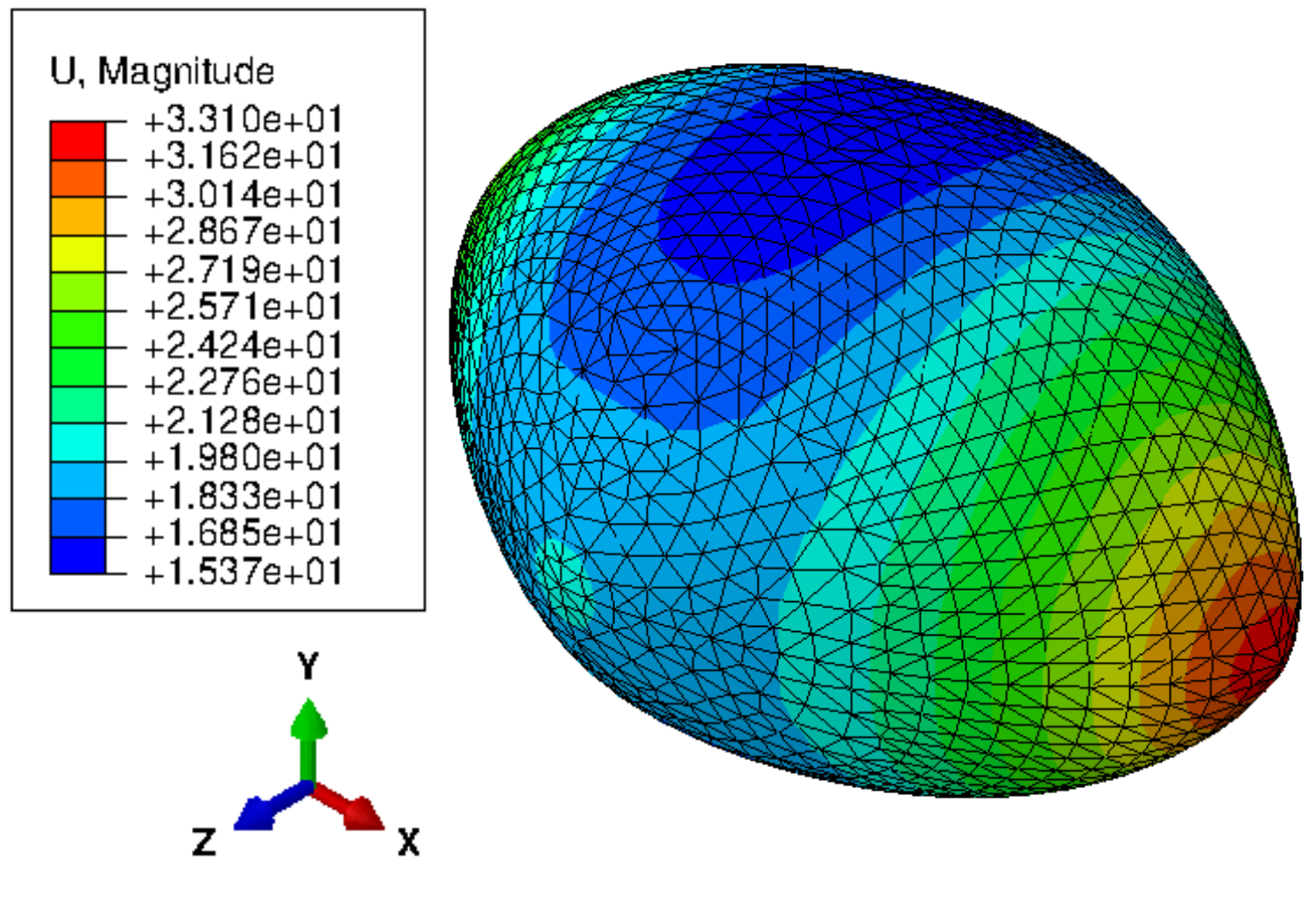}
    \caption{For $\theta = 75^{\circ}$}
  \label{fig:Inflation_75}
\end{subfigure} 
\begin{subfigure}[t]{0.49\textwidth}
  \centering
 \includegraphics[width=\linewidth]{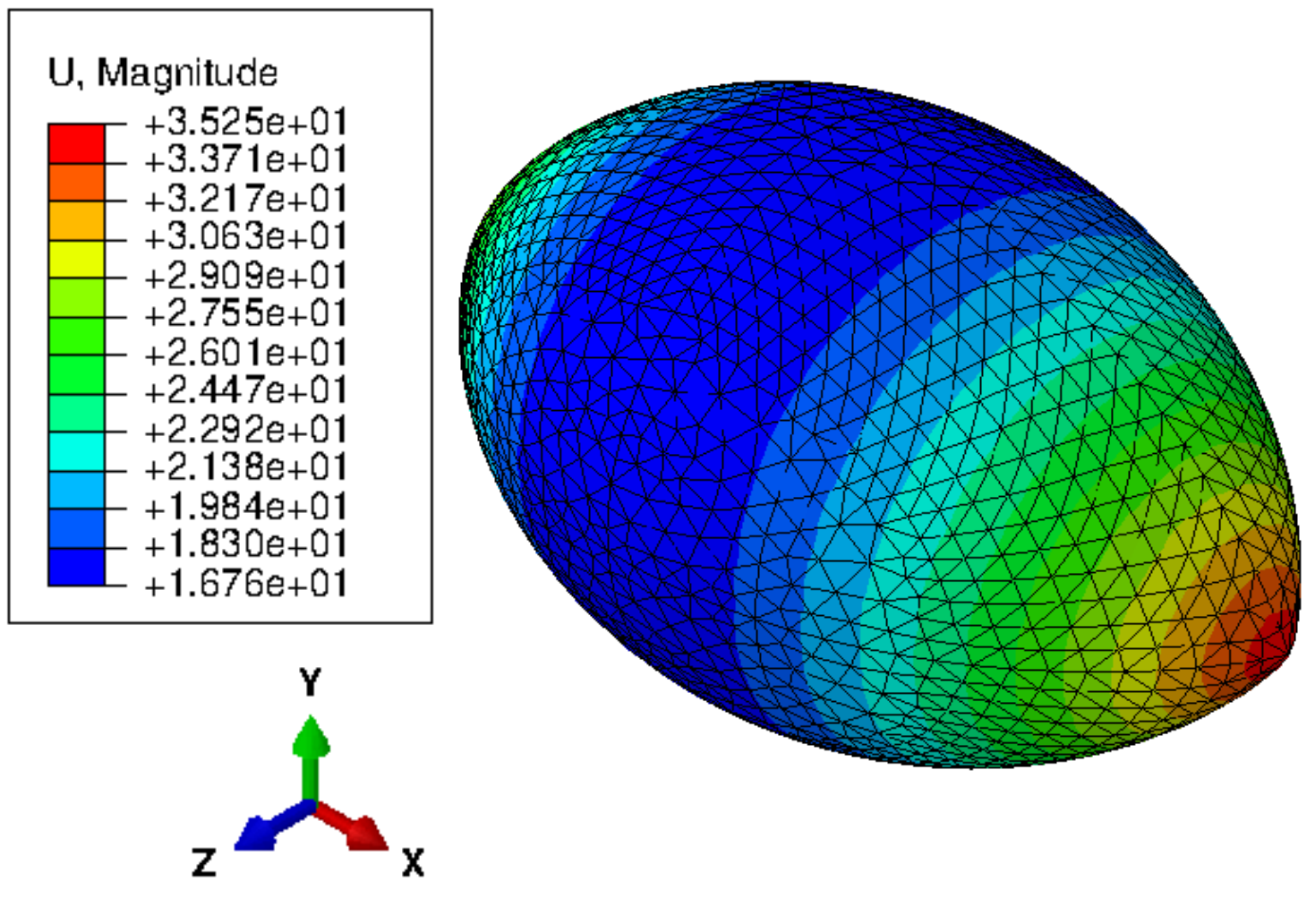}
    \caption{For $\theta = 90^{\circ}$}
  \label{fig:Inflation_90}
\end{subfigure}
\caption{Inflation modes of the benchmark anisotropic spherical balloon after reconstruction of the complete balloon using the plane symmetries conditions at the boundaries of the octant $\mathrm{S_0}$.}
\label{fig:Ballon_inflation_modes}
\end{figure}

\begin{figure}[http]
	\centering
	\includegraphics[width=0.6\columnwidth]{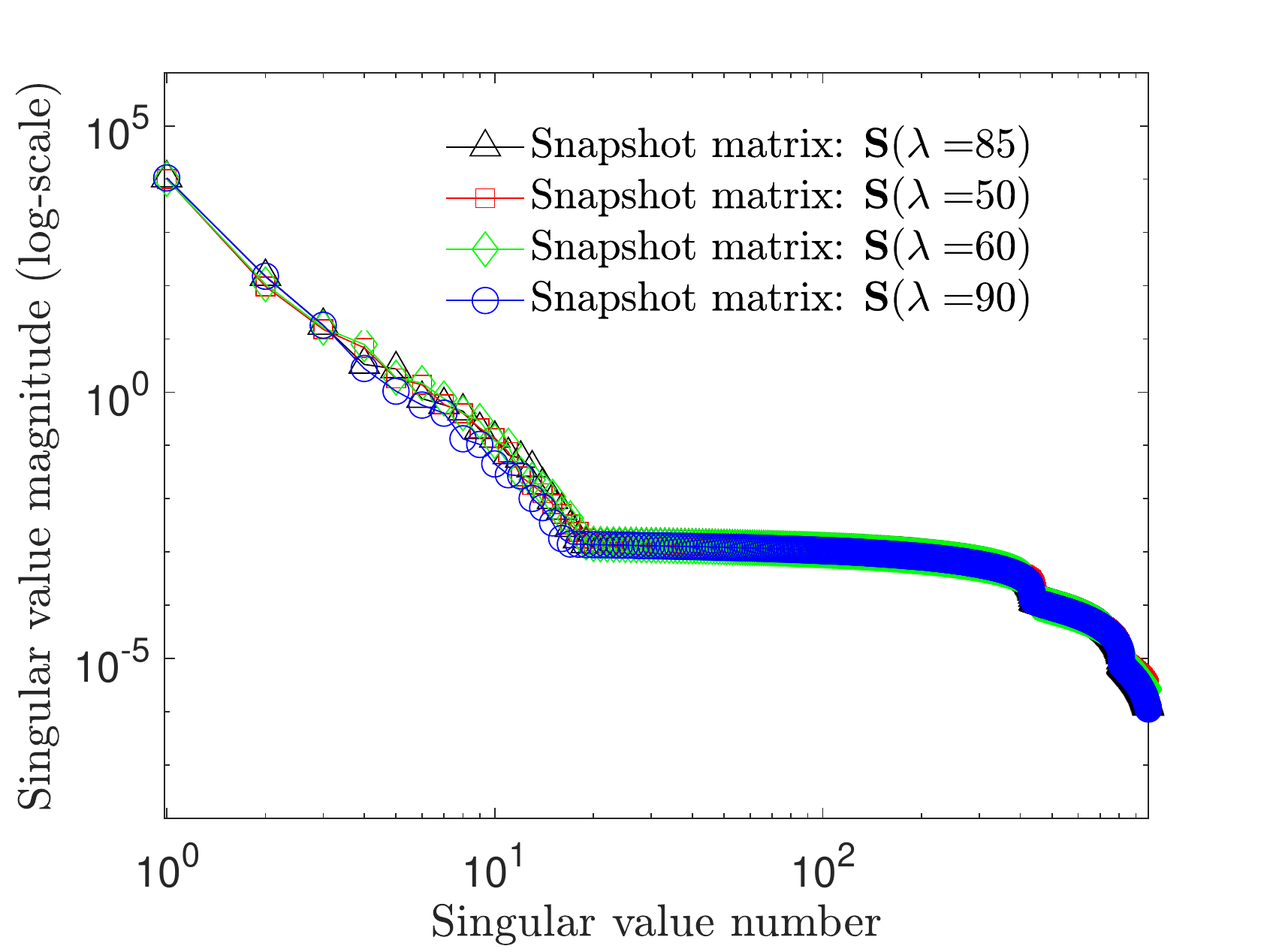}
	\caption{The eigenvalue spectrum of snapshot matrices $\bS_i$ corresponding to training points $\lambda_i=50,60,85,90$.}
	\label{fig:Sing_values_magnitude_infl_ballon}
\end{figure}

To quantify the accuracy of the interpolation, the relative $L_2$-error norm (in time) for a given target point $\tilde{\lambda}$ is evaluated with respect to the high-fidelity FEM solution. Using the interpolated and the HF-FEM snapshot matrices $\tilde{\bS}$ and $\bS^{\text{FEM}}$, respectively, the following error measure is defined at each time snapshot 

\begin{equation}
e_{L_2}(\tilde{\bS})= 
\frac{\Vert \mathbf{\tilde{u}}_i - \mathbf{u}^{\text{FEM}}_i \Vert_{L_2}}{\Vert  \mathbf{u}^{\text{FEM}}_i) \Vert_{L_2}}, \quad
i=1,\dots,N_t
\label{eq:L2_error_norm_HF}
\end{equation} 

In addition, the relative Frobenius error norm represents a global error measure which considers the error in the full time interval of the time steps

\begin{equation}
e_{F}(\tilde{\bS})= \Vert \mathbf{\tilde{S}} - \mathbf{S}^{\text{FEM}} \Vert_{F} / \Vert \mathbf{S}^{\text{FEM}} \Vert_{F}
\label{eq:Frobenious_error_norm}
\end{equation}

Using the linear constraint equations defined in~\eqref{eq:Linear constraint equations_POD}, $p$ reference points (for each POD mode) are created to assign the spatial POD basis representing the interpolated subspace $\tilde{\bom} \in \mathcal{G}(p,n)$ and the unknown time variables. Thus, the total number of equations of the ROM-FEM model is $6 \times p$ while the total number of equations of the corresponding HF-FEM model is $288 \times 6 = 1728$.

\subsubsection*{Stability conditions (C1) and (C2)}

First we need to know if the interpolation is (C1) and (C2) stable.

\
\textbf{Stability (C1).}
All points $\bom_1,\dotsc,\bom_N \in \mathcal{G}(p,n)$ lie in $\mathrm{U}_{\bom_0}$, given by~\eqref{eq:Def_Open_Set_Log}. We need to check that for all $i=1,\dots,N$, the matrix $\bY_0^{T}\bY_i$ is non singular. (C1) condition is satisfied for all $i=1,\dots,N$ and $p = 1,2,5,10,20$ POD modes considered.  

\

\textbf{Stability (C2).}
We need to know if all velocity vectors $\widetilde{v}(\lambda)$ belong to the subset $\mathrm{V}_{\bom_0}$ given by~\eqref{eq:Def_Vm_Angle}, for the parametric range $\lambda \in [\lambda_{1},\lambda_{N} ]$. Thus, we have to check that the first (maximum) singular value $\theta_1$ of a horizontal lift $\tilde{\mathbf{Z}}(\lambda)$ of the velocity vector $\widetilde{v}(\lambda)$ is such that $\theta_{1} < \pi/2$, for all $\lambda \in [\lambda_{1},\lambda_{N} ]$. We proceed by uniformly sampling 401 points over the parametric range $[50;90]$.
Figure~\ref{fig:Stability_condition_1_pbm_1} shows the maximum eigenvalue $\theta_1$ of the horizontal lift $\tilde{\mathbf{Z}}(\lambda)$ for all samples using $\bom_0(\lambda=85)$ as a reference point on the Grassmann manifold.
These curves provide all important information for the (C2) stability of interpolation by detecting the exact intervals of the loss of injectivity of the Exponential mapping for various POD modes $p=1,2,5,10,20$. Observe the loss of injectivity in a specific interval of the parameter range for  modes $p=10,20$. A remarkable result is the loss of injectivity inside the parameter range and not at the boundaries where the Exponential map becomes again injective. Note also that by increasing the dimension $p$, the curves shift more rapidly closer to $\pi/2$.  Figure~\ref{fig:Stability_condition_1_pbm_1} reveals that interpolation is (C2) stable for the target point $\tilde{\lambda} = 75$ for all POD modes $p$.         

\begin{figure}[H]
	\centering
	\includegraphics[width=0.7\columnwidth]{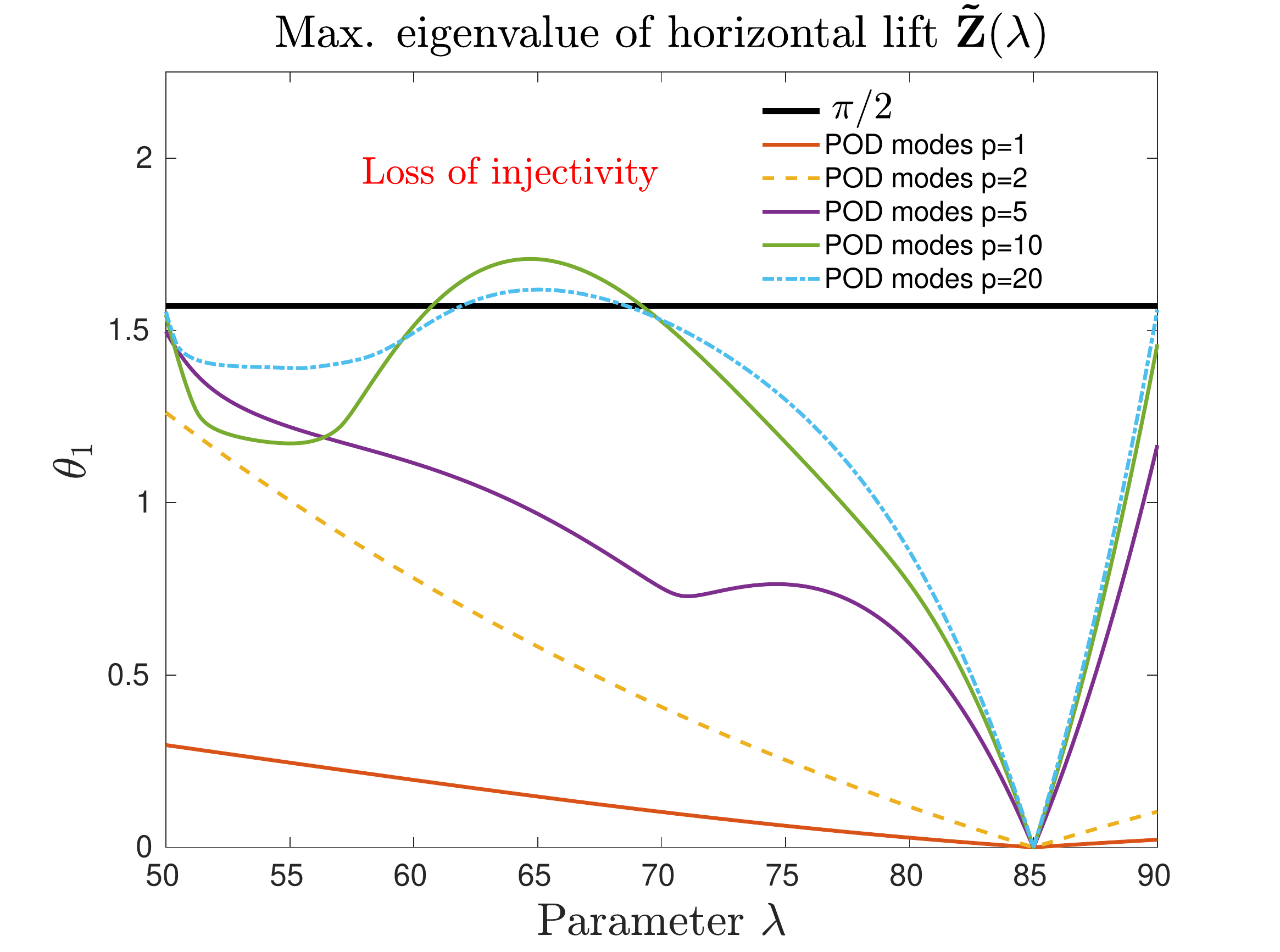}
	\caption{Stability (C2); Computation of the maximum eigenvalue $\theta_1$ of the horizontal lift $\tilde{\mathbf{Z}}(\lambda)$ over the parametric range $\lambda \in [50;90]$. Observe the loss of injectivity in a specific interval in the parametric range for POD modes $p=10,20$. Reference point on Grassmann manifold $\bom_0(\lambda=85)$.}
	\label{fig:Stability_condition_1_pbm_1}
\end{figure}

\subsubsection*{Accuracy and Stability condition (C3)}

Figure~\ref{fig:Standard_POD_vector_number_L2_error_norm} and Figure~\ref{fig:Standard_POD_vector_number_Frobenious_error_norm} show the relative $L_2$-error norm $e_{L_2}(\tilde{\bS})$ and the Frobenius error norm $e_{F}(\tilde{\bS})$ for the target point $\tilde{\lambda}$ of the ROM-FEM  solution constructed from the interpolated $p$-dimensional spatial modes. Additionally, Table~\ref{table:Grassmannian_dim_pbl_1} shows the Grassmannian dimension for the different number of POD modes.

\begin{figure}[H]
	\centering
	\includegraphics[width=0.6\columnwidth]{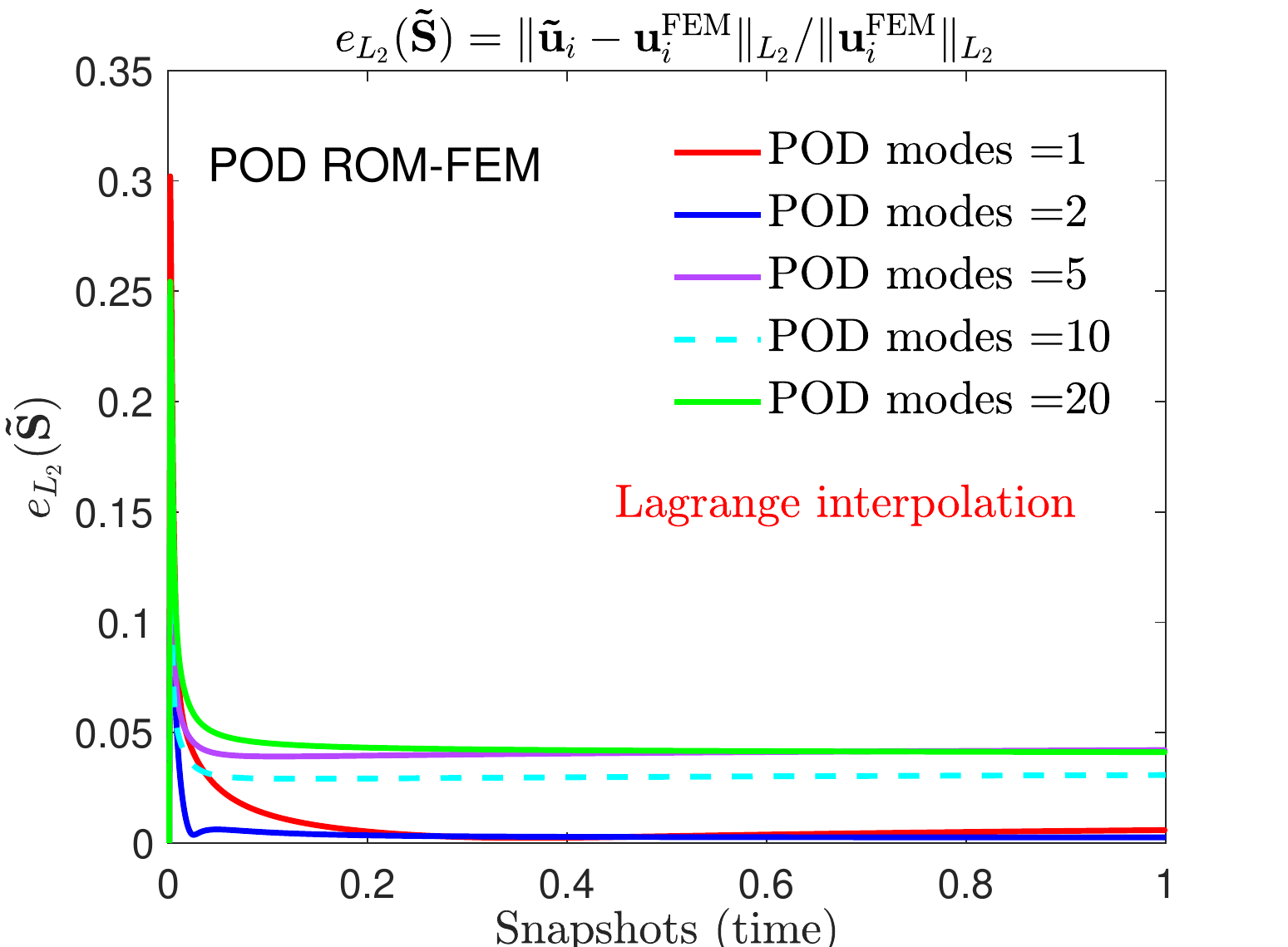}
	\caption{Relative $L_2$-error norm $e_{L_2}(\tilde{\bS})$ against the number of POD vectors for the POD ROM-FEM; target point: $ \tilde{\bom}(\lambda=75)$.}
	\label{fig:Standard_POD_vector_number_L2_error_norm}
\end{figure}

\begin{figure}[H]
	\centering
	\includegraphics[width=0.65\columnwidth]{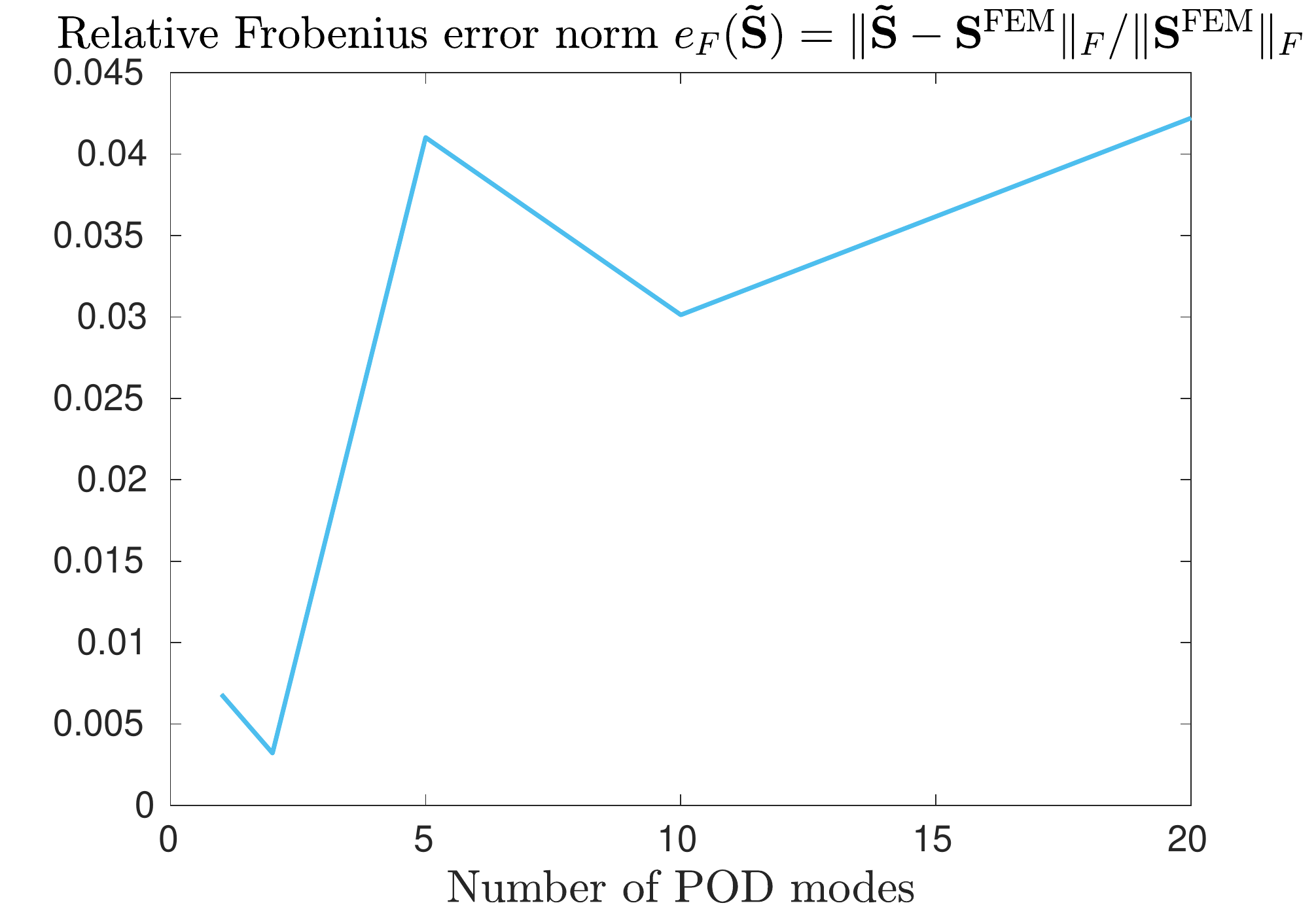}
	\caption{Relative Frobenius error norm against the number of POD vectors for the POD ROM-FEM; target point: $ \tilde{\bom}(\lambda=75)$.}
	\label{fig:Standard_POD_vector_number_Frobenious_error_norm}
\end{figure}

\begin{table}[ht]
\caption{Dimension of the Grassmann manifold $\mathcal{G}(p,n)$}
\centering
\begin{tabular}{ c c c c c c}
\hline 
\textbf{Number of modes} & \textbf{$p=1$} & \textbf{$p=2$} & \textbf{$p=5$} & \textbf{$p=10$} & \textbf{$p=20$}      \\ \hline
\textbf{Dimension: $p(n-p)$ }   &   1727  &  3452 &  8615 & 17180 & 34160 \\ \hline
\end{tabular}
\label{table:Grassmannian_dim_pbl_1}
\end{table}

\textbf{Stability (C3).}
We need to check if the interpolated subspaces $\widetilde{\mathcal{V}}$ and $\widetilde{\mathcal{V}}'$ respectively associated to the matrices $\widetilde{\bY}$ and $\widetilde{\bY}'$ correspond  to mode $p$ and $p'>p$ interpolation are such that $\widetilde{\mathcal{V}}\subset \widetilde{\mathcal{V}}'$.
Before examine if the interpolation is (C3) stable, observe from Figure~\ref{fig:Standard_POD_vector_number_L2_error_norm} and Figure~\ref{fig:Standard_POD_vector_number_Frobenious_error_norm} of the relative error norms \eqref{eq:L2_error_norm_HF} and \eqref{eq:Frobenious_error_norm}, respectively, that the error is minimum for $p=2$ POD modes and increases by introducing additional modes which at first glance contradicts the `expected' improvement of the solution by increasing the number of modes. In this case, the non-monotonous error decrease and the random oscillations follows from the non-inclusion defect between subspaces $\mathcal{V}$ and $\mathcal{V}'$ obtained by using different POD modes. To prove that fact, we compute the non-inclusion defect considering the geometric distance $\delta(\mathcal{V},\mathcal{V}')$ using the principal angles defined in \eqref{eq:Geom_Distance}.
We assume a set of POD modes $p \in \mathscr{P}_m = \{1,2,5,10,20\}$ and a threshold value $T_V =100$.
Figure~\ref{fig:Distance_subspaces_different_dimensions_pbl_1} lists the distances of the obtained POD basis of various dimensions $p \in \mathscr{P}_m$ in a symmetric table form. Observe that i) $\delta(\widetilde{\bY},\widetilde{\bY}') \neq 0$ for all $p \neq p'$ and ii) $\delta(\widetilde{\bY},\widetilde{\bY}')$ increase rapidly for $p>2$. Thus, this table explains why the relative error norms (Figure~\ref{fig:Standard_POD_vector_number_L2_error_norm} and Figure~\ref{fig:Standard_POD_vector_number_Frobenious_error_norm}) have a minimum at $p=2$ modes. 
Since the relative error $\epsilon$ given by~\eqref{eq:Threshold_Stab_C3} is here $\epsilon=554.03 > T_V $, we can conclude that the interpolation is not (C3) stable.
The results make clear and prove the non-inclusion defect of different subspaces which in turn give rise to the oscillatory behavior of the error norms as described above.

\begin{figure}[H]
	\centering
	\includegraphics[width=0.6\columnwidth]{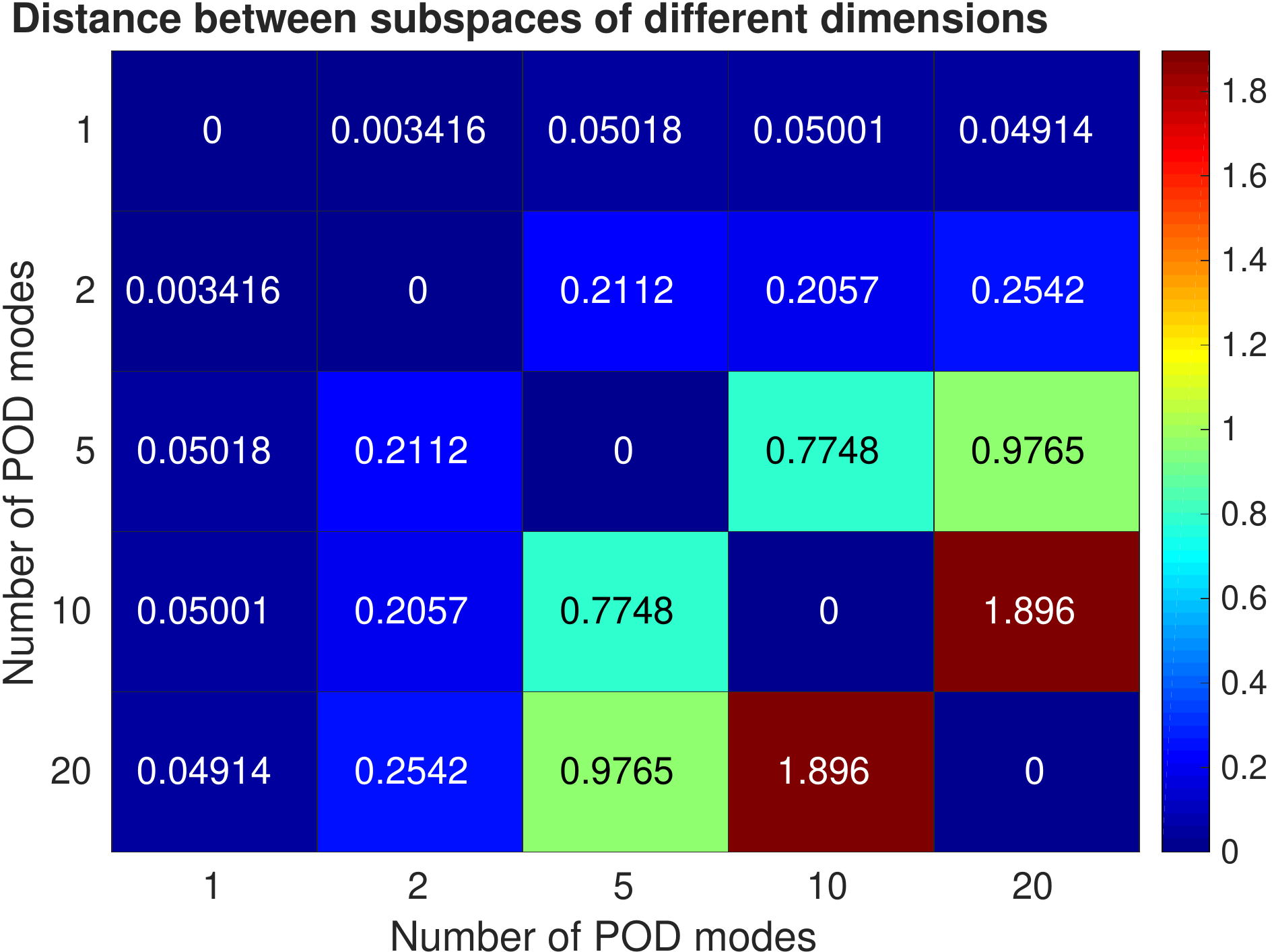}
	\caption{Stability (C3); Geometric distance $\delta(\bY,\bY')$ between interpolated subspaces of different dimensions.}
	\label{fig:Distance_subspaces_different_dimensions_pbl_1}
\end{figure}

Moreover, the  interpolation accuracy  is assessed using the relative displacement error $e_{\mathbf{u}} =\Vert \mathbf{\tilde{u}}(t)-\mathbf{u}^{FEM}(t) \Vert_{L_2} / \Vert \mathbf{u}^{FEM}(t) \Vert_{L_2}$ at the nodal points computed for $p=$1,2,5 and 10 POD modes. Figure~\ref{fig:Snapshot_L2_norm_balloon_modes_state_12} and Figure~\ref{fig:Snapshot_L2_norm_balloon_modes_state_1} present the local error at the increment state $t=0.002$ and at the final increment state  $t=1$ displayed at the position vector $\mathbf{x}^{FEM}(t)$ of the high-fidelity FEM model, respectively. In general, different patterns of the spatial error distribution can be observed with respect to the number of POD modes. In the majority of cases, the maximum error is located at the boundary points of the octant $\mathrm{S}_0$ of the initially spherical balloon where plane symmetries are imposed and at points of maximum displacement. Again, observe that the error is not decreasing by using more POD modes as Figure~\ref{fig:Snapshot_L2_norm_balloon_modes_state_1} shows.

\begin{figure}[H]
\begin{minipage}[b]{0.5\linewidth}
\centering
\includegraphics[width=\textwidth]{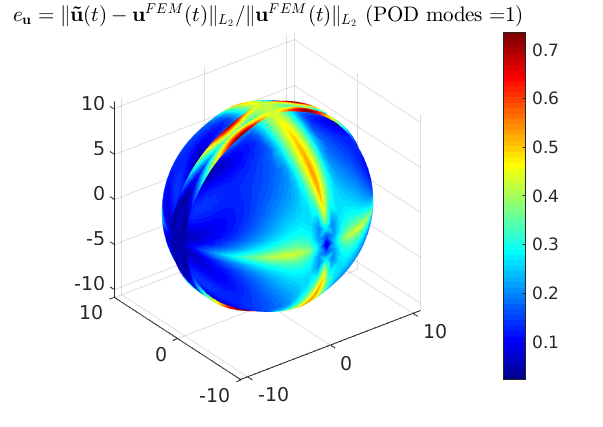}
\end{minipage}
\begin{minipage}[b]{0.5\linewidth}
\centering
\includegraphics[width=\textwidth]{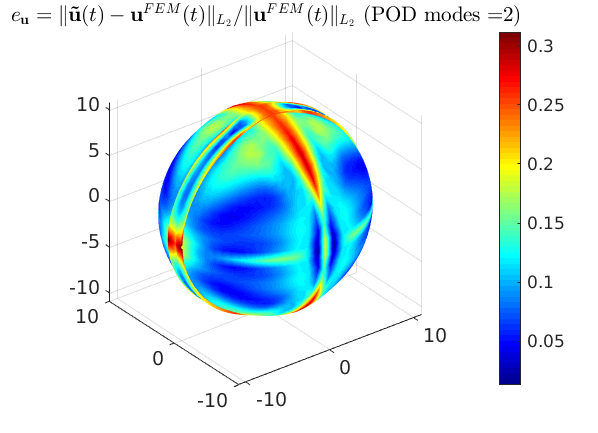}
\end{minipage}

\begin{minipage}[b]{0.5\linewidth}
\centering
\includegraphics[width=\textwidth]{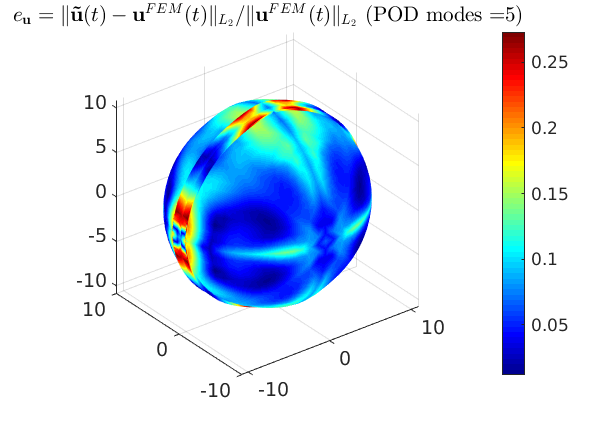}
\end{minipage}
\begin{minipage}[b]{0.5\linewidth}
\centering
\includegraphics[width=\textwidth]{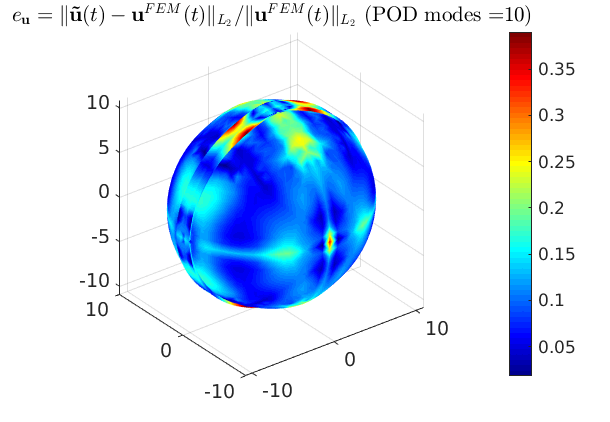}
\end{minipage}

\caption{Relative displacement error $e_{\mathbf{u}} =\Vert \mathbf{\tilde{u}}(t)-\mathbf{u}^{FEM}(t) \Vert_{L_2} / \Vert \mathbf{u}^{FEM}(t) \Vert_{L_2}$  at the nodal points at state $t=0.002$ for POD modes $p= \{1,2,5,10\}$ displayed at the position vector $\mathbf{x}^{FEM}(t)$ of the high-fidelity FEM model; target point: $ \tilde{\bom}(\lambda=75)$.}
\label{fig:Snapshot_L2_norm_balloon_modes_state_12}
\end{figure}

\begin{figure}[H]
\begin{minipage}[b]{0.5\linewidth}
\centering
\includegraphics[width=\textwidth]{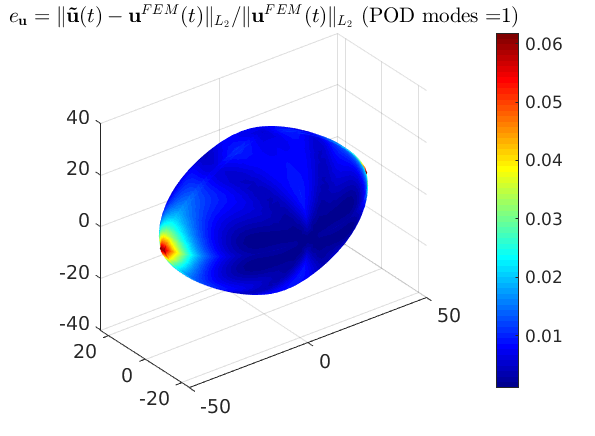}
\end{minipage}
\begin{minipage}[b]{0.5\linewidth}
\centering
\includegraphics[width=\textwidth]{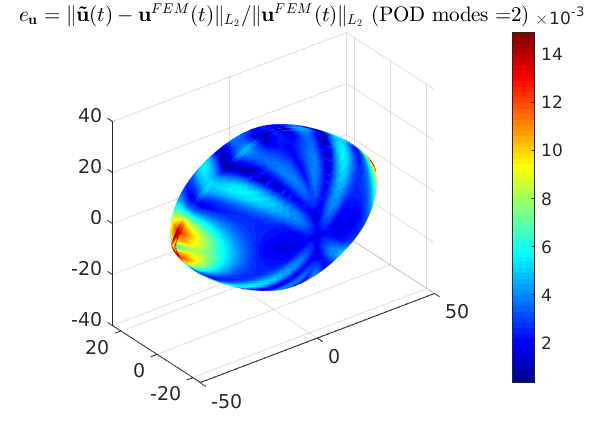}
\end{minipage}

\begin{minipage}[b]{0.5\linewidth}
\centering
\includegraphics[width=\textwidth]{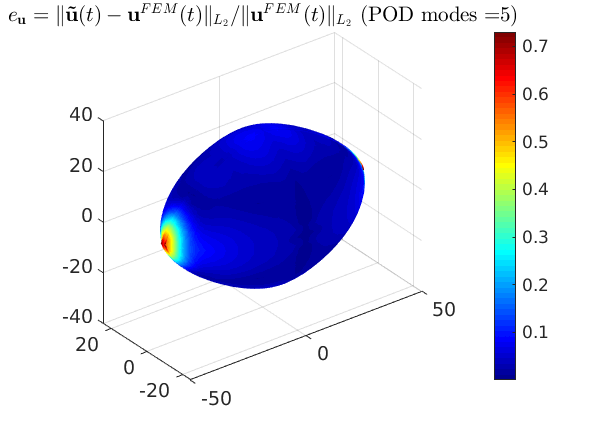}
\end{minipage}
\begin{minipage}[b]{0.5\linewidth}
\centering
\includegraphics[width=\textwidth]{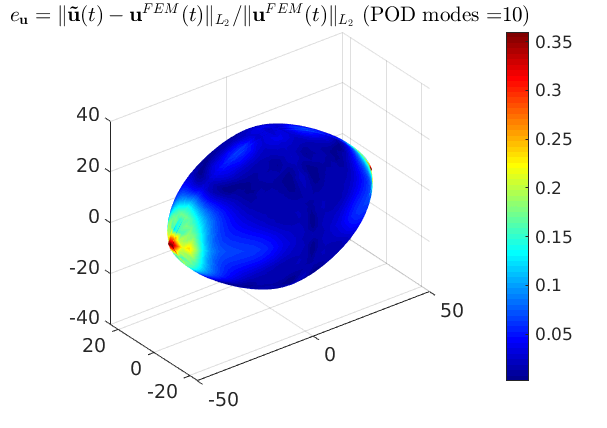}
\end{minipage}
\caption{Relative displacement error $e_{\mathbf{u}} =\Vert \mathbf{\tilde{u}}(t)-\mathbf{u}^{FEM}(t) \Vert_{L_2} / \Vert \mathbf{u}^{FEM}(t) \Vert_{L_2}$  at the nodal points at state $t=1$ for POD modes $p= \{1,2,5,10\}$ displayed at the position vector $\mathbf{x}^{FEM}(t)$ of the high-fidelity FEM model; target point: $ \tilde{\bom}(\lambda=75)$.}
\label{fig:Snapshot_L2_norm_balloon_modes_state_1}
\end{figure}

Finally, Figure~\ref{fig:Total_displacement_radial_points_ROM_FEM_balloon} shows the time-displacement histories for the radial points A, B and C on the initially spherical balloon for the POD ROM-FEM model compared against its high fidelity counterpart solution using POD mode $p=1$. It can be observed that the interpolated ROM-FEM solution delivers good accuracy and is accurate enough to predict the anisotropic balloon inflation at the target parameter.

\begin{figure}[H]
	\centering
	\includegraphics[width=0.6\columnwidth]{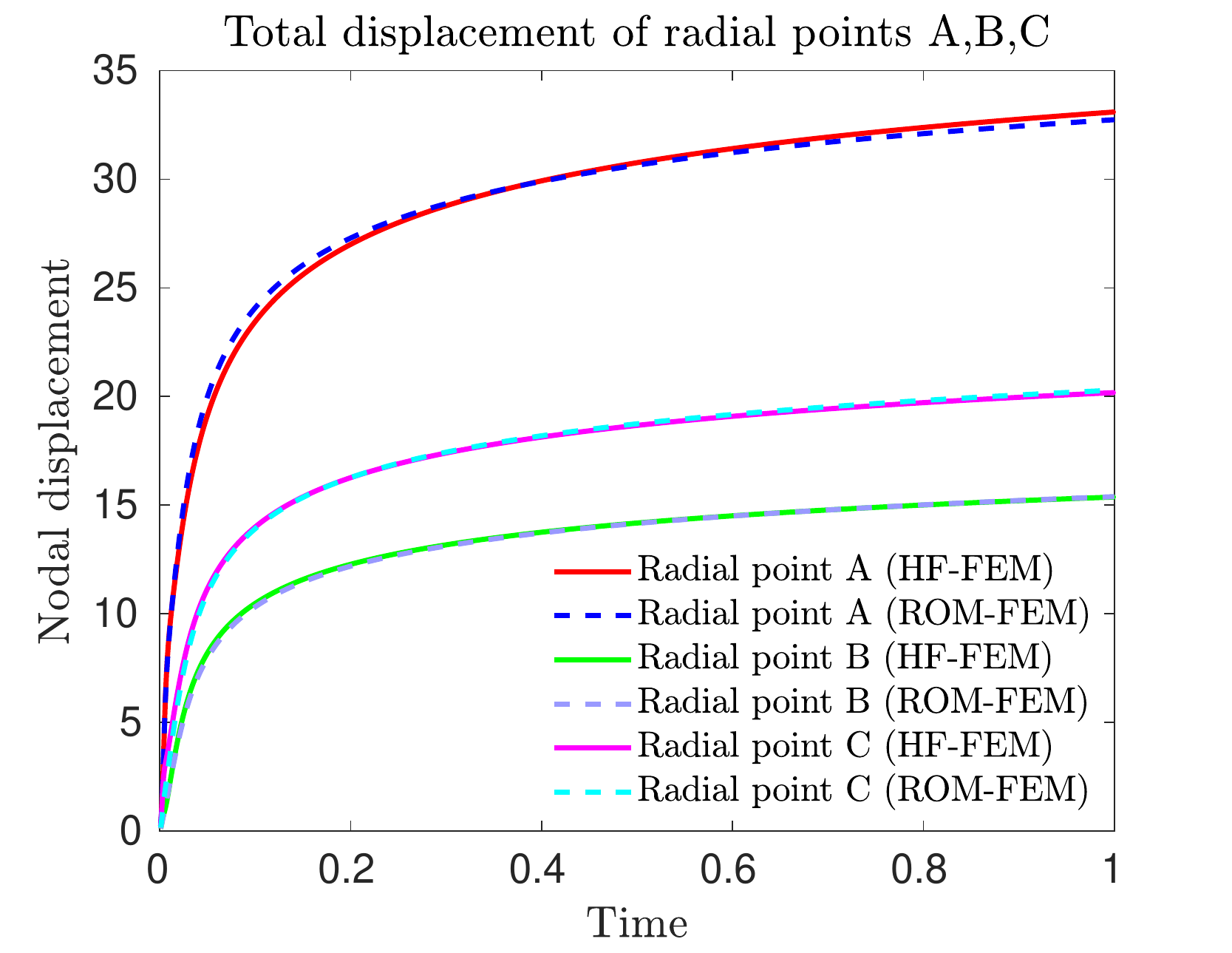}
	\caption{POD ROM-FEM model using Lagrange interpolation; comparison of the displacement of radial points A,B and C against the high-fidelity FEM solution; training points: $\bom_0(\lambda=85)$ (reference point); $\bom_1(\lambda=50)$; $\bom_2(\lambda=60)$; $\bom_3(\lambda=90)$; target point: $ \tilde{\bom}(\lambda=75)$; POD modes $p=1$.}
	\label{fig:Total_displacement_radial_points_ROM_FEM_balloon}
\end{figure}

\subsection{Hyperelastic structure with multiple components}\label{subsec:Benchmark_structural}

In what follows, pMOR is investigated for a hyperelastic structure considering the material stiffness as a parameter. The model consists of two basic components: a plane shell section which is connected with six truss elements (non-symmetrically) (see Figure~\ref{fig:Simple_MV_model}). The plane section has dimensions $20 \times 20$ (mm), a constant thickness of 0.5 mm and is meshed with rectangular shells (S4). The hyperelastic model defined in~\eqref{eq:Strain_energy_MV} (UMAT) is assigned to the plane section in which the fiber orientations are aligned with the x-axis. The following parameters are used: $c_0 = 0.0520$ (kPa),  $c_1 = 4.63$ and  $c_2 = 22.6$. The truss elements are of type T3D2 with a cross-section area of 1 mm$^2$. For these elements, an isotropic incompressible hyperelastic material model is implemented into  Abaqus/Standard subroutine UHYPER~\cite{ABAQUS2014}. The material model is derived from the following strain-energy function

\begin{equation}
U = \alpha_1 (\text{exp}[\alpha_2 (I_1-3)] - 1)
\label{eq:Strain_energy_CT}
\end{equation}

where $\alpha_1$ and $\alpha_2$ are material parameters defined as: $\alpha_1 = 0.0565$ kPa and $\alpha_2$ is used for the parametric analysis. At the boundary  of the plane section ($x=0$) and at the foundations of the truss elements all degrees of freedom are set to zero. A constant hydrostatic pressure of 120 mmHg (0.016 MPa) is applied at the bottom side of the plane section.

\begin{figure}[htbp]
	\begin{minipage}[b]{0.5\linewidth}
		\centering
		\includegraphics[width=\textwidth]{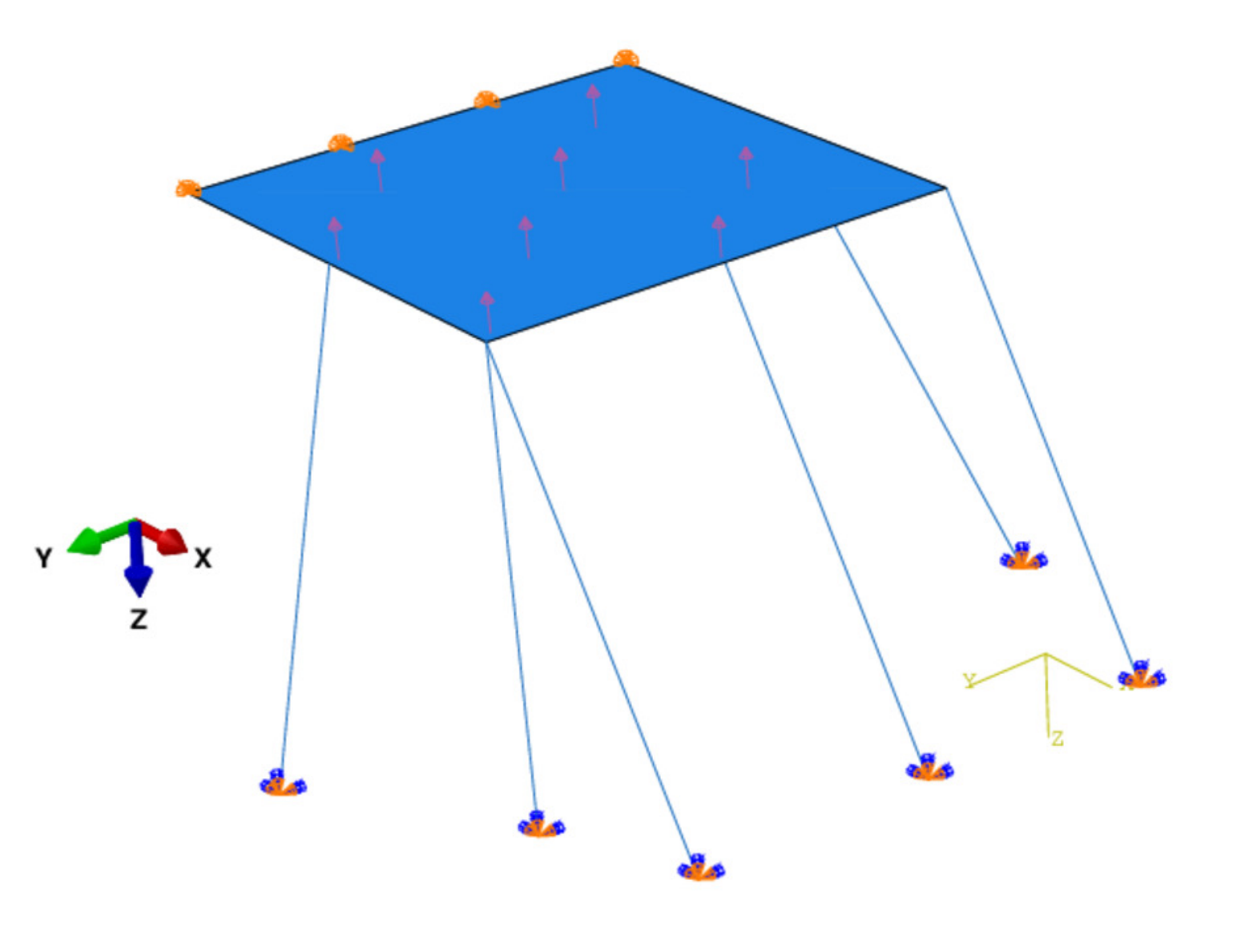}
	\end{minipage}
	\begin{minipage}[b]{0.5\linewidth}
		\centering
		\includegraphics[width=\textwidth]{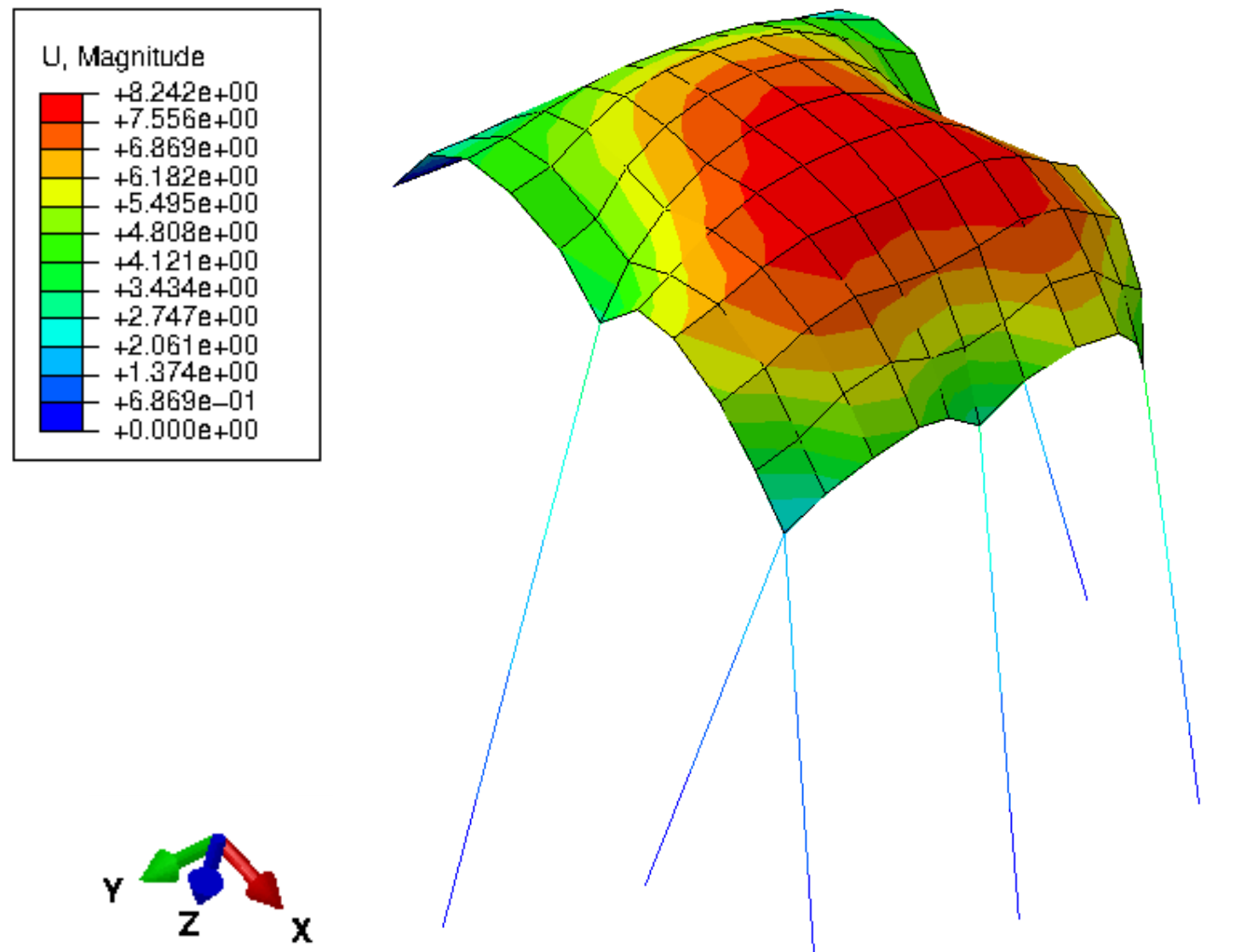}
	\end{minipage}
	\caption{Geometry, boundary conditions and total displacement of the structural multi-component model subjected to hydrostatic pressure, comprised of an anisotropic hyperelastic plane shell section which is non-symmetrically supported by a set of  hyperelastic truss elements.}
	\label{fig:Simple_MV_model}
\end{figure}

\subsubsection*{Snapshot matrices for pMOR}

The FEM simulations are performed using  Abaqus/Standard (Implicit) software.  For the exponential parameter $\alpha_2$, the following set of training points are chosen where for convenience with the previous sections we changed the notation to $\lambda \in \{5, 10, 15, 20, 25, 30 \}$. Figure~\ref{fig:2PKS_stretch} shows the second Piola-Kirchhoff stress-stretch curves for the corresponding parameter values which reveals a wide spectrum of stress values. For each parametric simulation, a sequence of snapshots uniformly distributed over time using an increment of $\Delta t =0.001$ is extracted for all nodes of the plane structure from the model database. The space-time snapshot matrices $\bS(\lambda_i) \in \mathbb{R}^{n \times N_t} $ of size $(n=726) \times (N_t=1000)$ are associated to nodal displacement and rotational fields.
The following training points $\lambda_i \in \Lambda_t = \{ 15,20,25,30 \}$ are chosen for estimating the target point
$\tilde{\lambda}=17.5$. After construction of the set of low-dimensional POD basis for the training points $\lambda_i$, a POD basis for the target point $\tilde{\lambda}$ is interpolated on a Grassmann manifold using Lagrange interpolation. Then, the interpolated POD spatial basis is introduced in Abaqus software using the linear constraint equations (Section~\ref{sec:multi_point_constraint_equations}) to construct a ROM for FEM analysis associated to the target parameter point. For each ROM FEM model of $p$ POD modes,  the same number of reference points are created to assign the interpolated spatial POD modes and the unknown `time' variables that need to be determined.

The eigenvalue spectrum of snapshot matrices $\bS_{i}$ corresponding to training points $\lambda_i \in \Lambda_t$ is shown in a log-log scale in Figure~\ref{fig:Singular_value_magnitude}. It is evident that the distance between the first three eigenvalues is of one order of magnitude each. In our experiments we perform interpolation using $p=1,2,5,10,20$ POD modes since they capture the most important characteristics of the system.

\begin{figure}[H]
	\centering
	\includegraphics[width=0.6\columnwidth]{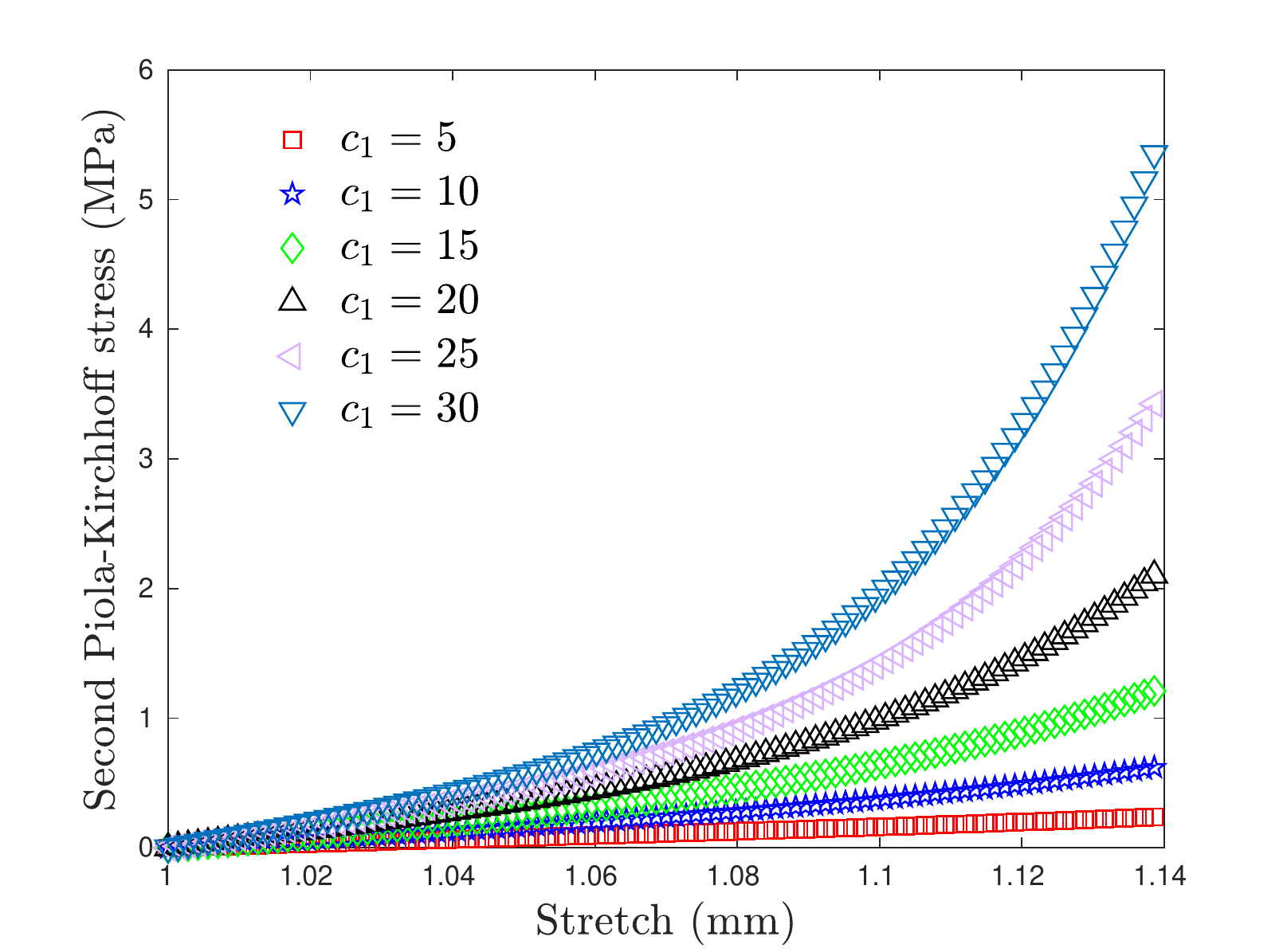}
	\caption{Second Piola-Kirchhoff stress vs stretch for the examined parameter range.}
	\label{fig:2PKS_stretch}
\end{figure}

\begin{figure}[H]
	\centering
	\includegraphics[width=0.6\columnwidth]{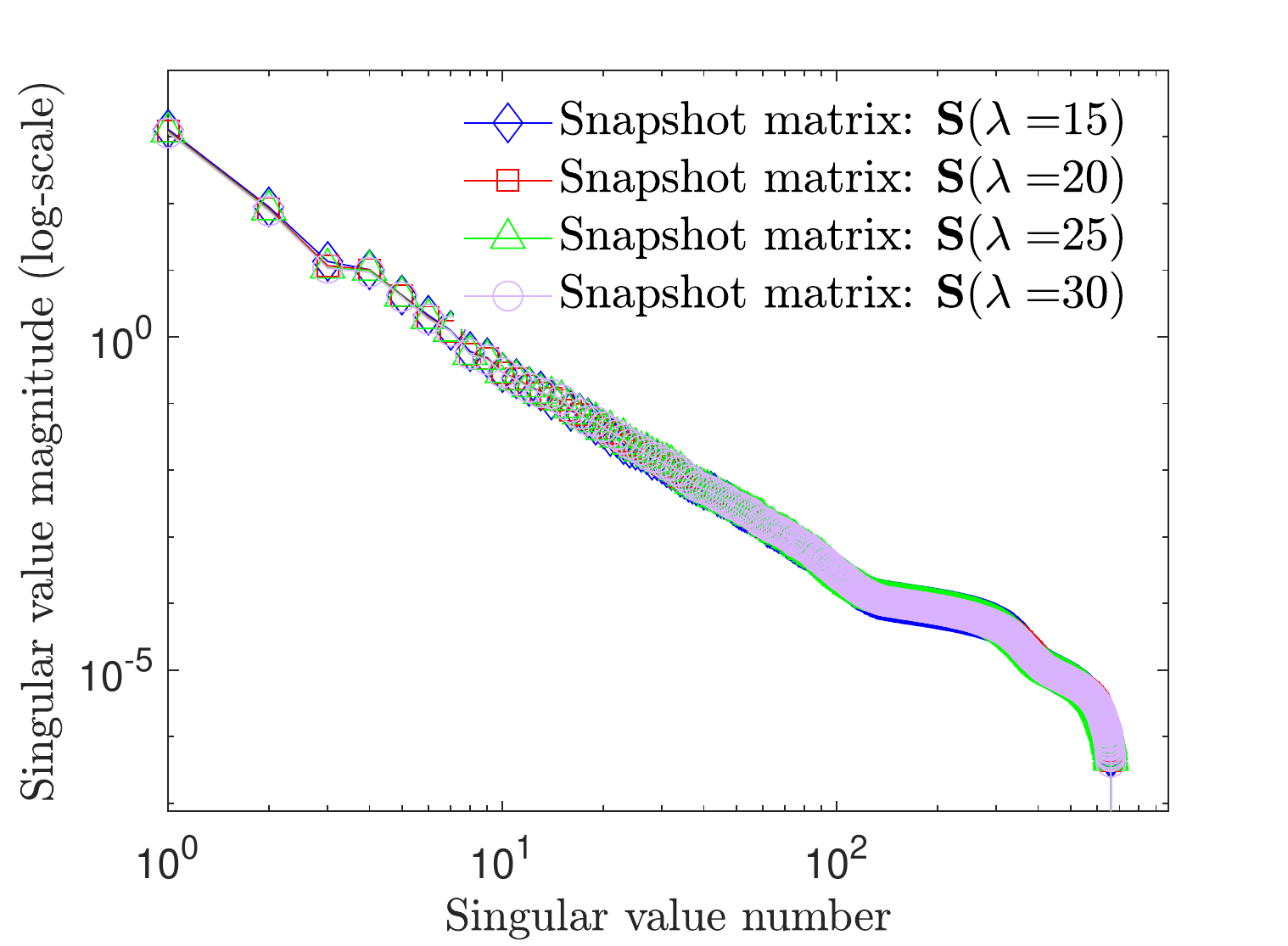}
	\caption{The eigenvalue spectrum of snapshot matrices $\bS_i$ corresponding to training points $\lambda_i \in \Lambda_t = \{ 15,20,25,30 \}$.}
	\label{fig:Singular_value_magnitude}
\end{figure}

\subsubsection*{Stability conditions C1 and C2}

First we need to know if the interpolation is well-defined by evaluating the (C1) and (C2) stability conditions.

\textbf{Stability (C1).}
This condition requires that all points $\bom_1,\dotsc,\bom_N \in \mathcal{G}(p,n)$ lie in $\mathrm{U}_{\bom_0}$ given by~\eqref{eq:Def_Open_Set_Log}. Thus, we need to check if the matrix $\bY_0^{T}\bY_i$ is non-singular for all $i=1,\dots,N$. Since this condition is satisfied for all $i=1,\dots,N$ and $p = 1,2,5,10,20$ POD modes considered in this example, the interpolation is (C1) stable.         
\

\textbf{Stability (C2).}
We need to know if all velocity vectors $\widetilde{v}(\lambda)$ belong to the subset $\mathrm{V}_{\bom_0}$ given by~\eqref{eq:Def_Vm_Angle}, for the parametric range $\lambda \in [\lambda_{1},\lambda_{N} ]$. Thus we have to check that the first (maximum) singular value $\theta_1$ of an horizontal lift $\tilde{\mathbf{Z}}(\lambda)$ of the velocity vector $\widetilde{v}(\lambda)$ is such that $\theta_{1} < \pi/2$, for all $\lambda \in [\lambda_{1},\lambda_{N} ]$.  We proceed by uniformly sampling 151 points over the parametric range $[15;30]$. Figure~\ref{fig:Stability_condition_1_pbm_2} shows the maximum eigenvalue $\theta_1$ of the horizontal lift $\tilde{\mathbf{Z}}(\lambda)$ for all samples using $\bom_0(\lambda=15)$ as a reference point on the Grassmann manifold.
From these curves we are able to assess the (C2) stability of interpolation by detecting the exact  intervals of the loss of injectivity of the Exponential mapping over the parametric range for different number of POD modes $p$. It is clear that for $p \leq 10$ the interpolation is stable over the entire parametric range. Observe  
the loss of injectivity in a specific interval of parameter $\lambda$ for $p=20$ modes. Again, as in the previous example, note that by increasing the dimension $p$, the curves progressively tend to shift closer to $\pi/2$.    Figure~\ref{fig:Stability_condition_1_pbm_2} reveals that interpolation is (C2) stable for the target point $\tilde{\lambda} = 17.5$ for all POD modes p.

\begin{figure}[H]
	\centering
	\includegraphics[width=0.7\columnwidth]{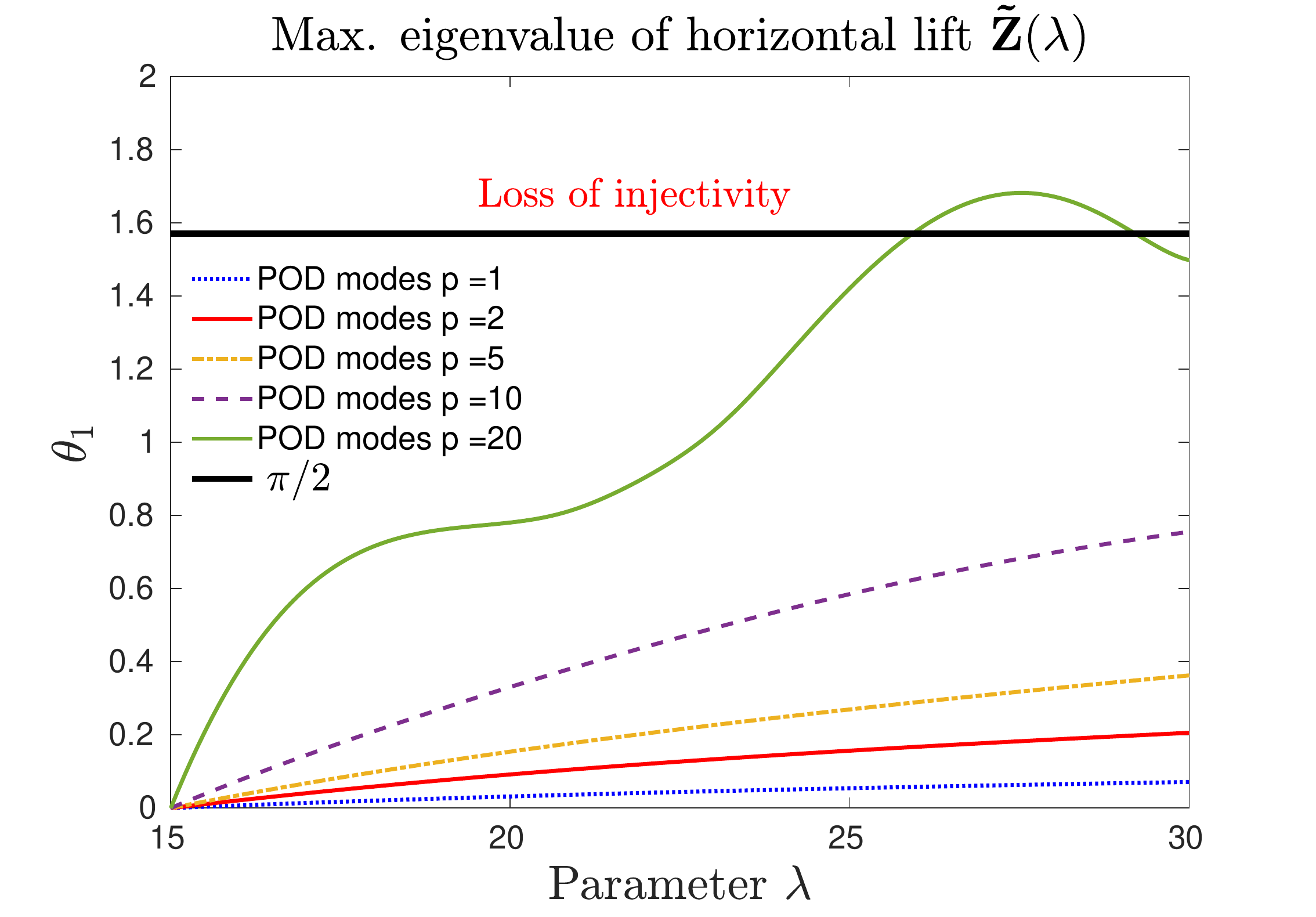}
	\caption{Stability (C2); Computation of the maximum eigenvalue $\theta_1$ of the horizontal lift $\tilde{\mathbf{Z}}(\lambda)$ over the parametric range $[15;30]$. Observe the loss of injectivity in a specific interval of parameters for POD modes $p=20$. Reference point on Grassmann manifold $\bom_0(\lambda=15)$.}
	\label{fig:Stability_condition_1_pbm_2}
\end{figure}

\subsubsection*{Interpolation accuracy and Stability condition (C3)}

The accuracy of interpolation is assessed by comparing the relative $L_2$-error norm $e_{L_2}(\tilde{\bS})$ and the relative Frobenius error norm $e_{F}(\tilde{\bS})$ defined by the ROM FEM model and its  high-fidelity counterpart solution against the number of POD modes $p$, as shown in Figure~\ref{fig:Standard_POD_L2_error_norm} and Figure~\ref{fig:Standard_POD_vector_number_Frobenious_error_norm_pbl_2}, respectively. Additionally, Table~\ref{table:Grassmannian_dim_pbl_2} illustrates the Grassmannian dimension for the corresponding number of POD modes $p$.

\textbf{Stability (C3).}
We need to check if the interpolated subspaces $\widetilde{\mathcal{V}}$ and $\widetilde{\mathcal{V}}'$ respectively associated to matrices $\widetilde{\bY}$ and $\widetilde{\bY}'$ correspond to mode $p$ and $p'>p$ interpolation, are such that $\widetilde{\mathcal{V}}\subset \widetilde{\mathcal{V}}'$.
Before performing this stability test, observe the monotonic decrease of the  relative error norms \eqref{eq:L2_error_norm_HF} and \eqref{eq:Frobenious_error_norm} by increasing mode $p$, as depicted in Figure~\ref{fig:Standard_POD_L2_error_norm} and Figure~\ref{fig:Standard_POD_vector_number_Frobenious_error_norm_pbl_2}, respectively. We are now ready to see how the geometric distance $\delta(\mathcal{V},\mathcal{V}')$ using the principal angles defined in \eqref{eq:Geom_Distance} relate to the error norm behavior.
Again, we assume a set of POD modes $p \in \mathscr{P}_m = \{1,2,5,10,20\}$ and a threshold value $T_V =100$.
To this end, we compute the distances $\delta(\widetilde{\bY},\widetilde{\bY}')$ of the interpolated POD basis on Grassmann manifolds $\mathcal{G}(p,n)$ of various dimensions $p \in \mathscr{P}_m$, plotted in a symmetric table form, as Figure~\ref{fig:Distance_subspaces_different_dimensions_pbl_2} shows. Again, the results prove the non-connectivity of different subspaces of various dimensions $p$. What is remarkable to observe in this case, is that the geometric distance $\delta(\widetilde{\bY},\widetilde{\bY}') \approx 0$ for all $p \neq p'$.  
Moreover, the relative error $\epsilon$ given by~\eqref{eq:Threshold_Stab_C3} is here $\epsilon=73.60 < T_V $, which is sufficiently small to assure a (C3) stable interpolation.

Finally, Figure~\ref{fig:Time_displacement_histories_ROM_FEM_POD_20} shows a comparison of the predicted time histories of selected nodal total displacements for the ROM FEM model using $p = 20$ POD modes against the high fidelity FEM solution. It is evident that all nodal time-histories are nearly identical.

\begin{figure}[http]
	\centering
	\includegraphics[width=0.7\columnwidth]{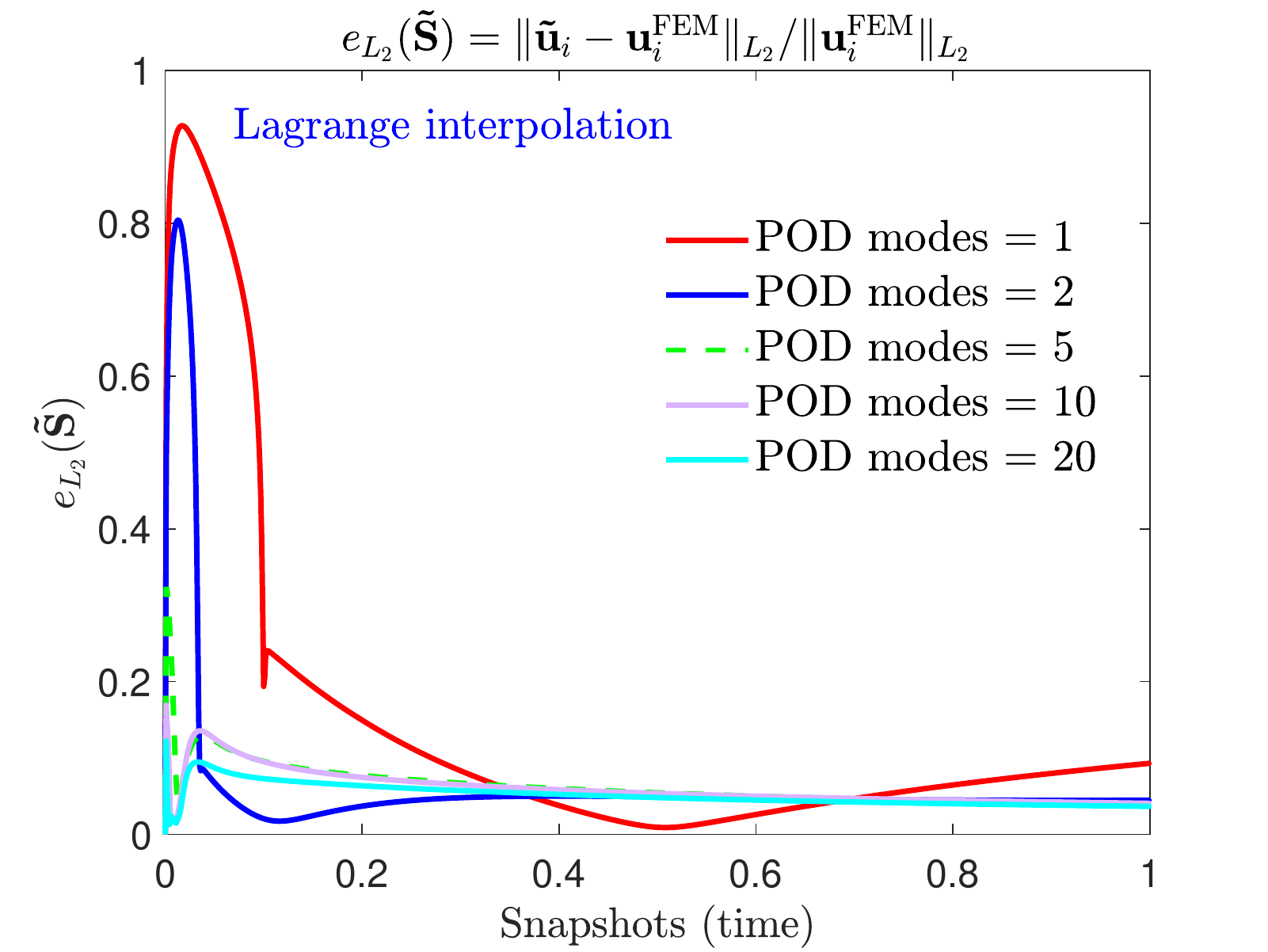}
	\caption{POD ROM-FEM; relative $L_2$-error norm $e_{L_2}(\tilde{\bS})$ against the number of POD vectors; target point: $ \tilde{\bom}(\lambda=17.5)$.}
	\label{fig:Standard_POD_L2_error_norm}
\end{figure}

\begin{figure}[H]
	\centering
	\includegraphics[width=0.65\columnwidth]{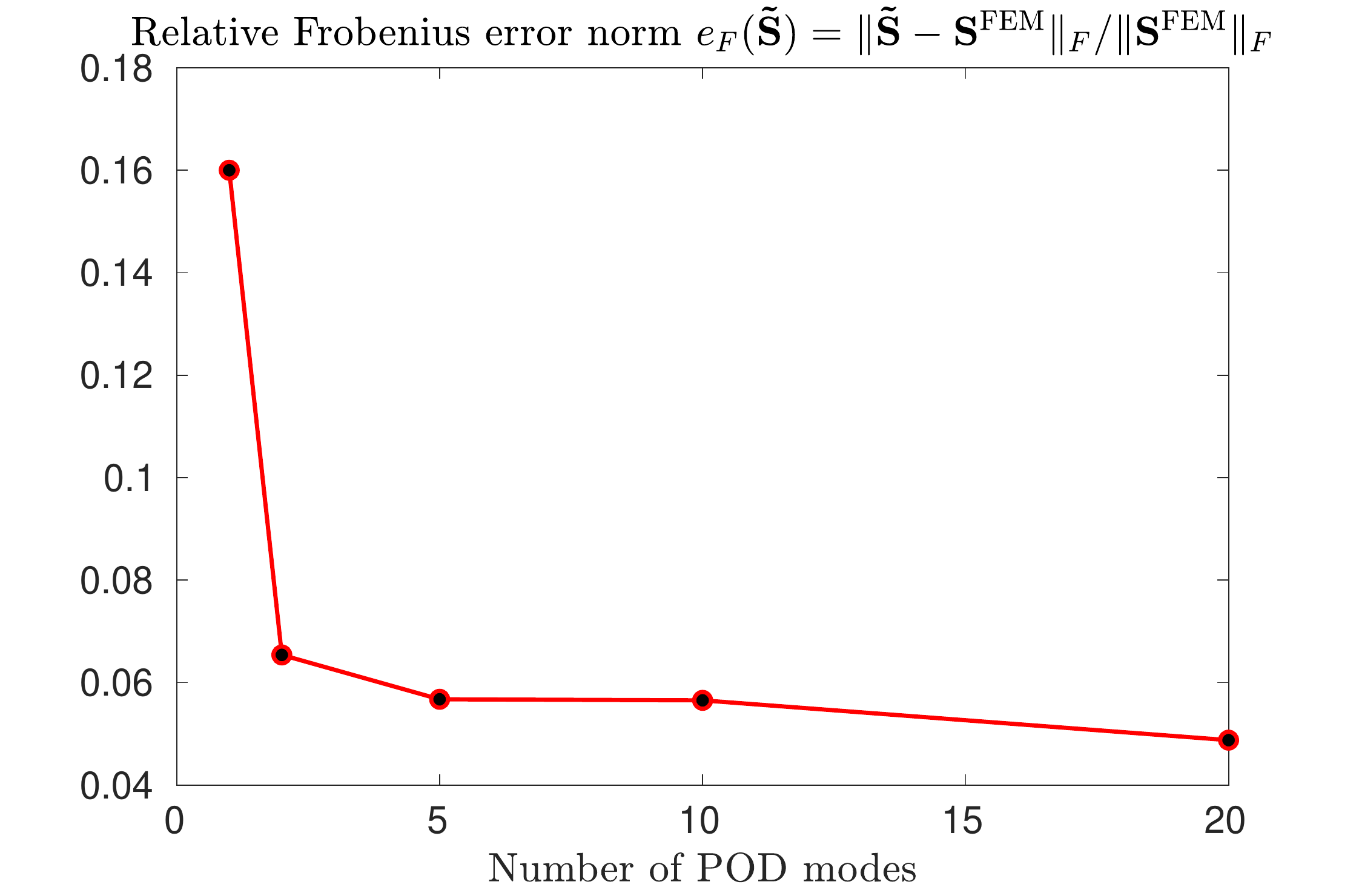}
	\caption{Relative Frobenius error norm against the number of POD vectors for the POD ROM-FEM; target point: $ \tilde{\bom}(\lambda=17.5)$.}
	\label{fig:Standard_POD_vector_number_Frobenious_error_norm_pbl_2}
\end{figure}

\begin{table}[ht]
	\caption{Dimension of the Grassmann manifold $\mathcal{G}(p,n)$}
	\centering
	\begin{tabular}{ c c c c c c}
		\hline 
		\textbf{Number of modes} & \textbf{$p=1$} & \textbf{$p=2$} & \textbf{$p=5$} & \textbf{$p=10$} & \textbf{$p=20$}      \\ \hline
		\textbf{Dimension: $p(n-p)$ } & 725 & 1448 & 3605 & 7160 & 14120   \\ \hline
	\end{tabular}
	\label{table:Grassmannian_dim_pbl_2}
\end{table}

\begin{figure}[H]
	\centering
	\includegraphics[width=0.6\columnwidth]{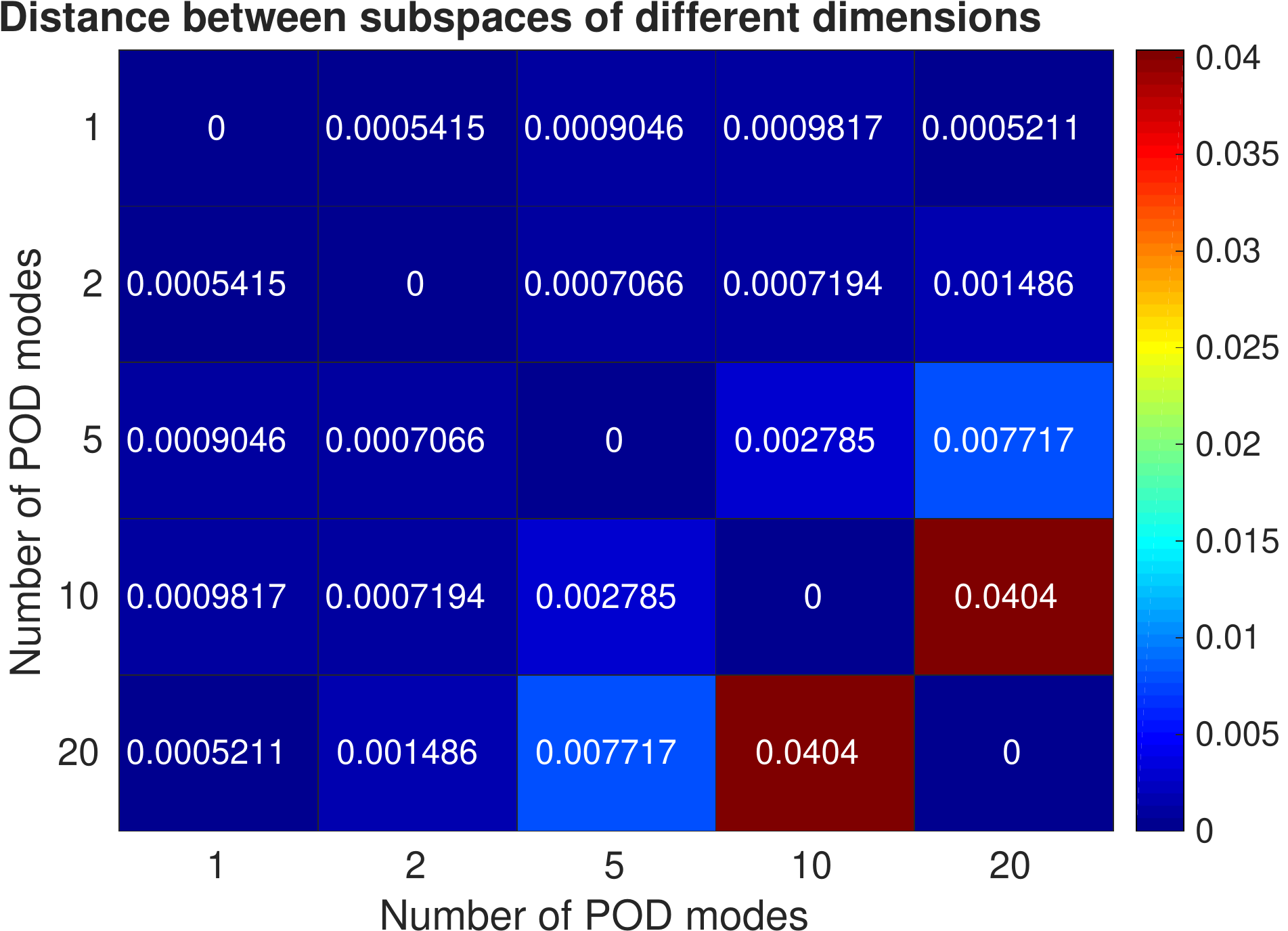}
	\caption{Stability (C3); Geometric distance $\delta(\bY,\bY')$ between interpolated subspaces of different dimensions $p$.}
	\label{fig:Distance_subspaces_different_dimensions_pbl_2}
\end{figure}

\begin{figure}[http]
	\centering
	\includegraphics[width=0.7\columnwidth]{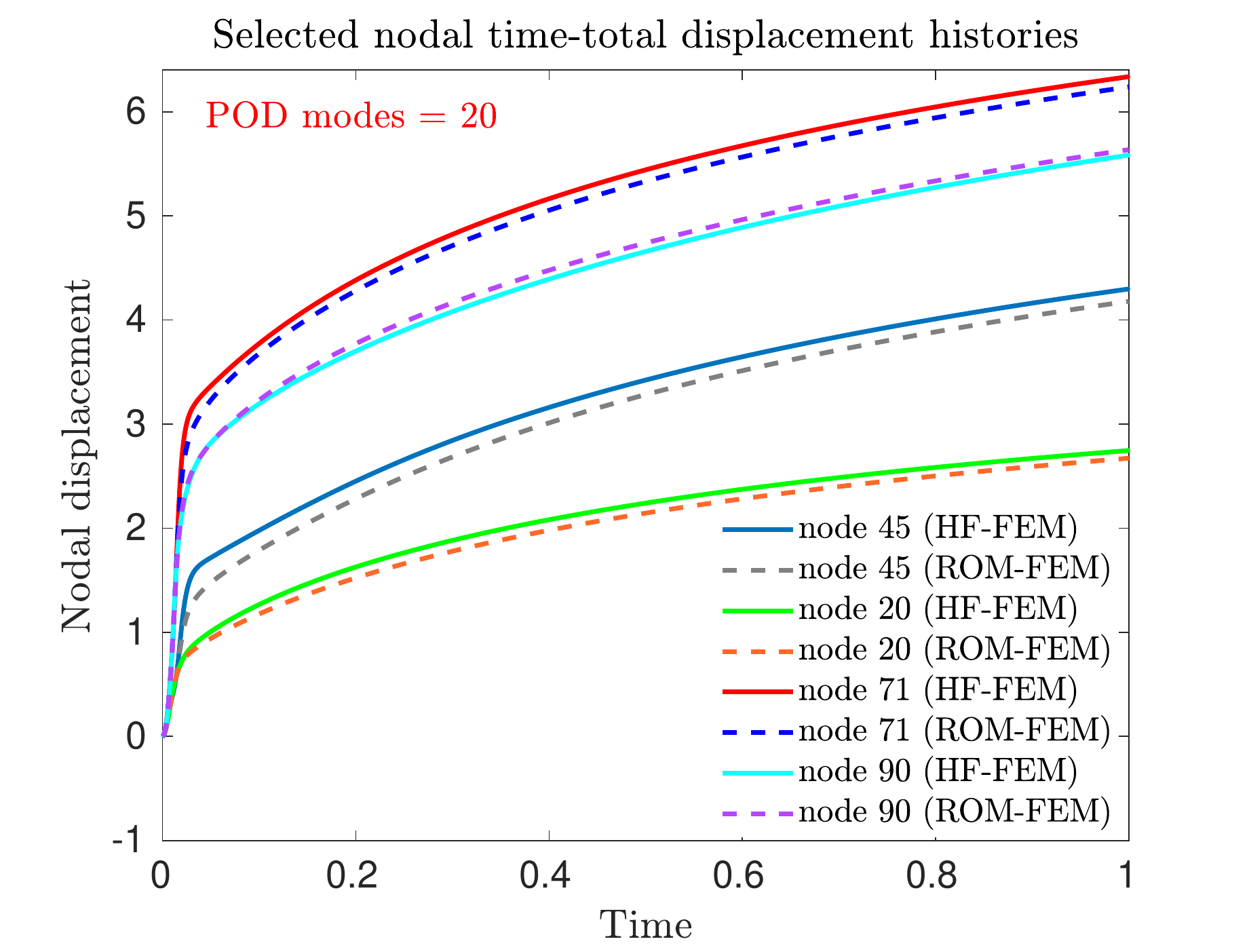}
	\caption{POD ROM-FEM; comparison of selected nodal time-displacement histories against the high-fidelity FEM solution; training points: $\bom_0(\lambda=15)$; $\bom_1(\lambda=20)$; $\bom_2(\lambda=25)$; $\bom_3(\lambda=30)$; target point: $ \tilde{\bom}(\lambda=17.5)$; POD modes = 20.}
	\label{fig:Time_displacement_histories_ROM_FEM_POD_20}
\end{figure}

\section{Conclusions}
\label{sec:Conclusions}

Effective mathematical definitions for stability conditions of POD basis interpolation on Grassmann manifolds for pMOR in hyperelasticity are given. Special attention has been paid on the definition of local maps on Grassmann manifolds considering the Logarithm and Exponential maps. In this context, the notion of cut--locus is introduced since it optimally captures the loss of injectivity of the exponential map. 
The formulae for the  Grassmannian cut--locus to establish a stable interpolation is mathematically proved. 
Another intrinsic stability condition is defined by computing the geometric distance of the interpolated POD basis of different mode. This enables us to explain intrinsic oscillations of the error norm with increasing mode, and on the contrary, solutions with  monotonic behavior. The
pMOR benchmark examples revealed important aspects of stability.   

\section{Acknowledgements}
This work has been founded by DGA (``direction générale pour l’armement", French ministry of defense) under the RAPID contract called ``Innvivotech Tissus Mous" in partnership with BIOMODEX.


\appendix

\renewcommand{\thethm}{\Alph{section}.\arabic{thm}}

\section{Riemannian geometry of Grassmann Manifolds}
\label{sec:Grassmann_Manifolds}

The purpose of this appendix is to recall main results about Grassmann manifolds, as well as new ones about the \emph{cut--locus} and injectivity condition for the exponential map.
As far as we know, the normal coordinates are classically defined using the exponential map restricted on an open disk deduced from the injectivity radius~\cite{Kozlov2000,Mosquera2019b}. In fact, it will be possible to go beyond such an injectivity radius, using an open set deduced from the \emph{cut--locus} of the Grassmann manifold, all this being detailed in~\ref{subsec:Cut_Locus}.

Note that some results recalled here are classical, either given in their matrix forms~\cite{Absil2004,Amsallem2009,Edelman1998g,Mosquera2018,Mosquera2019b,Oulghelou2018}, or given in a more abstract one~\cite{Kozlov1997,Kozlov2000}, but it was necessary to write them back for our proofs to be clearly established. Note also that all details about general differential Riemannian geometry can be found in~\cite{Lee2013,Boothby1986,Gallot1990}.

From now on, let us consider two integers $p,n$ such that $p\leq n$ and take $\mathcal{G}(p,n)$ to be the Grassmann manifold of $p$ dimensional subspaces of $\RR^n$. A first way to obtain a point $\bom\in \mathcal{G}(p,n)$ is to consider a basis $\by_1,\dots,\by_p$ of the associated subspace
\begin{equation*}
\bom=\text{Vect}(\by_1,\dots,\by_p).
\end{equation*}
Without loss of generality, it is possible to restrict oneself in the case of \emph{orthonormal basis}, so $\bom$ can be represented by a matrix
\begin{equation*}
\bY:=[\by_1,\dotsc,\by_p]\in \VecMat{n}{p},\quad \bY^T\bY=\bI_p.
\end{equation*}
Such matrix $\bY$ is not unique, as any matrix in the set
\begin{equation*}
\left\{\bY\bP,\quad \bP\in \mathrm{O}(p)\right\},\quad \mathrm{O}(p):=\left\{\bP\in \VecMat{p}{p},\quad \bP^T\bP=\bI_p\right\},
\end{equation*}
can represent the same point $\bom$.

From this, the Grassmann manifold $\mathcal{G}(p,n)$ is obtained as a \emph{quotient space}~\cite[Chapter 21]{Lee2013} of the (compact) space of $p$ ordered orthonormal vectors of $\RR^n$. More specifically~\cite[Appendix C.2]{Helmke2012}, first define the compact Stiefel manifold $\mathcal{S}t^{c}(p,n)$ to be the set of $p$ orthonormal vectors $\{\by_1,\dotsc,\by_p\}$ of $\RR^n$. Taking any basis of $\RR^n$, such a set can be represented by a rank $p$ matrix
\begin{equation*}
\bY:=[\by_1,\dotsc,\by_p]\in \VecMat{n}{p},\quad \bY^T\bY=\bI_p.
\end{equation*}
This led to define a \emph{fiber bundle}~\cite{Kobayashi1996,Ferrer1994}, which is also a \emph{submersion}~\cite{Lee2013}:
\begin{equation}\label{eq:Sub_Stiefel_NC}
\pi \: : \: \bY \in \mathcal{S}t^{c}(p,n)\mapsto \pi(\bY)=\bom:=\{ \bY\bP,\quad \bP\in \mathrm{O}(p)\} \in \mathcal{G}(p,n)
\end{equation}
Informally speaking, it means that any point $\bom$ of the Grassmann manifold $\mathcal{G}(p,n)$ can be represented by any point $\bY$ of the \emph{fiber} $\pi^{-1}(\bom)$ (Figure~\ref{fig:Fiber_bundle}).

\begin{figure}[http]
	\centering
	\includegraphics[scale=0.65]{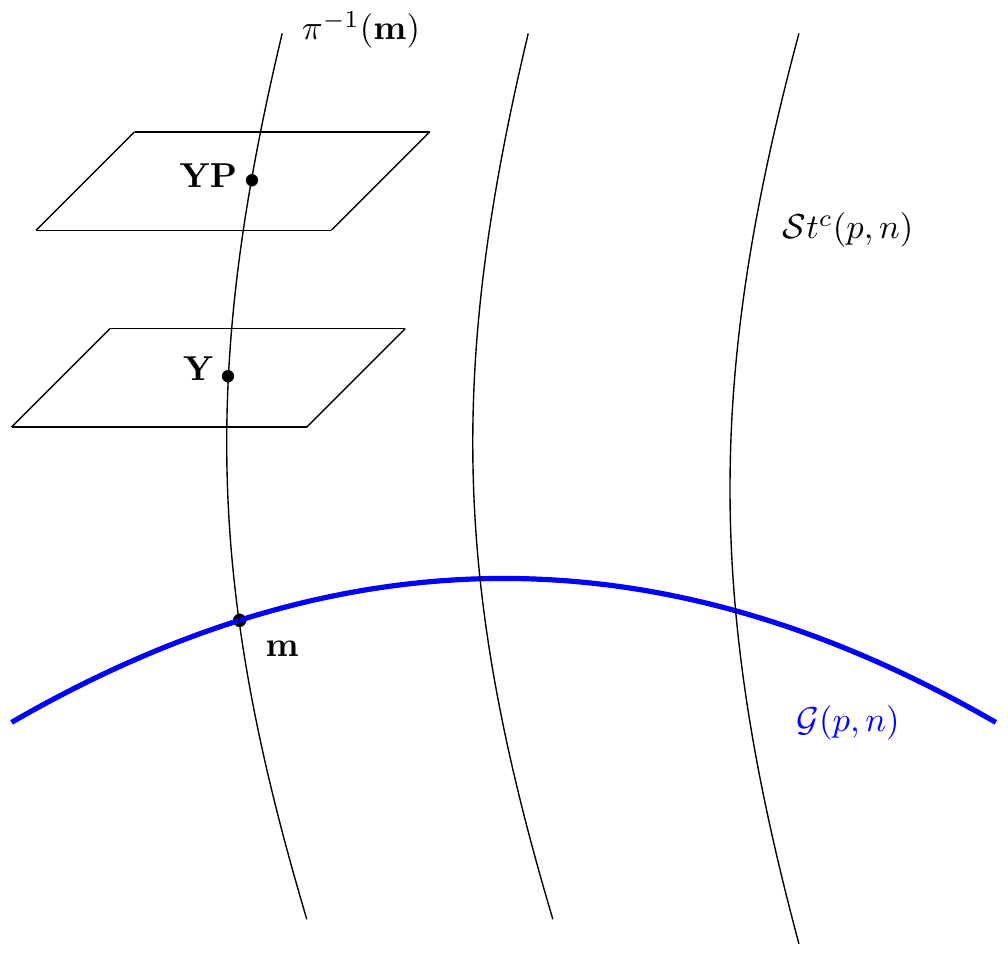}
	\caption{Schematic of a fiber bundle.}  
	\label{fig:Fiber_bundle}
\end{figure}

\subsection{The Grassmann Manifold and its Riemannian metric}

From the submersion $\pi$ given by~\eqref{eq:Sub_Stiefel_NC}, the Grassmann manifold $\mathcal{G}(p,n)$ can inherit the geometry of the Stiefel manifold $\mathcal{S}t^{c}(p,n)$ and its Riemannian structure~\cite{Gallot1990}.

First, the Stiefel manifold $\mathcal{S}t^{c}(p,n)\subset\VecMat{n}{p}$, is naturally endowed with an inner product
given by
\begin{equation*}
\langle \mathbf{Z}_1,\mathbf{Z_2}\rangle:=\tr(\mathbf{Z}_1^{T}\mathbf{Z_2}),\quad \bZ_1,\bZ_2\in \VecMat{n}{p}.
\end{equation*}
Now, we need to attach, to each $\bom\in \mathcal{G}(p,n)$ a \emph{tangent space} $T_{\bom}\mathcal{G}(p,n)$, which is a vector space isomorphic to $\RR^{p\times (n-p)}$, equipped with a scalar product (depending smoothly on $\bom$), so that $\mathcal{G}(p,n)$ becomes a Riemannian manifold.

In fact, there is no canonical way to get a representation of a velocity vector $v\in T_{\bom}\mathcal{G}(p,n)$, as it depends on the choice of a matrix $\bY\in \mathcal{S}t^{c}(p,n)$ defining $\bom$ (see Figure~\ref{fig:Fiber_bundle}): for any $\bY\in \pi^{-1}(\bom)$, we define indeed its associated \emph{horizontal space} by:
\begin{equation}\label{eq:Def_Hor_Lift}
\text{Hor}_{\bY}:=\{ \bZ \in \VecMat{n}{p},\quad \bZ^{T}\bY=\mathbf{0} \}.
\end{equation}
Finally:
\begin{enumerate}
	\item The tangent space $T_{\bom}\mathcal{G}(p,n)$ is isomorphic to any $\text{Hor}_{\bY}$ with $\bY$ such that $\pi(\bY)=\bom$. An isomorphism is given by
	\begin{equation*}
	\text{d}\pi_{\bY \mid\text{Hor}_{\bY}}\: : \: \text{Hor}_{\bY}\longmapsto T_{\bom}\mathcal{G}(p,n).
	\end{equation*}
	\item For any $v\in T_{\bom}\mathcal{G}(p,n)$, the unique $\bZ\in \text{Hor}_{\bY}$ such that
	\begin{equation}\label{eq:Hor_Lift}
	\text{d}\pi_{\bY}\cdot \bZ=v
	\end{equation}
	is called a \emph{horizontal lift} of $v$.
	\item For any $\bP\in \mathrm{O}(p)$, then $\bZ\bP$ is another horizontal lift of $v$ (but belonging to the vector space $\text{Hor}_{\bY\bP}$) and
	\begin{equation*}
	\text{d}\pi_{\bY\bP}\cdot (\bZ\bP)=v.
	\end{equation*}
\end{enumerate}
The Riemannian metric on the Grassmannian $\mathcal{G}(p,n)$ is then defined by
\begin{equation*}
\langle v_1,v_2\rangle_{\bom}:=\langle \bZ_1,\bZ_2\rangle_{\bY},\quad
\end{equation*}
with $\pi(\bY)=\bom$ and $\bZ_{1}$ (resp. $\bZ_{2}$) a horizontal lift of $v_1$ (resp. $v_2$) in $\text{Hor}_{\bY}$.

For the proofs of the following subsections, an interesting geometric approach, due to Zhou~\cite{Zhou1998}, is given by:

\begin{lem}\label{lem:Hor_Lift_Intrisic}
	Let $\bom\in \mathcal{G}(p,n)$ and $v\in T_{\bom}\mathcal{G}(p,n)$, with $2p\leq n$. Then there exists an orthonormal basis $\by_1,\dots,\by_n$ of $\RR^n$ such that
	\begin{multline*}
	\bY=[\by_1,\dots,\by_p]\in \pi^{-1}(\bom),\quad \bZ=[\theta_1\by_{p+1},\cdots,\theta_{p}\by_{2p}]\in \text{\emph{Hor}}_{\bY},\\
	\theta_{1}\geq \cdots \geq \theta_{p}\geq 0.
	\end{multline*}
\end{lem}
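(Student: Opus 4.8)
The plan is to produce the desired orthonormal basis by applying a singular value decomposition to a horizontal lift of $v$, and then reading off the basis vectors from the left and right singular vectors. First I would fix any matrix $\bY_0 \in \pi^{-1}(\bom)$ with $\bY_0^T \bY_0 = \bI_p$, and take the (unique) horizontal lift $\bZ_0 \in \text{Hor}_{\bY_0}$ of $v$, so that $\bZ_0^T \bY_0 = \mathbf{0}$. Since $\bZ_0 \in \VecMat{n}{p}$, a thin SVD gives $\bZ_0 = \bU_0 \bThe \bV_0^T$ with $\bU_0 \in \VecMat{n}{p}$ having orthonormal columns, $\bV_0 \in \mathrm{O}(p)$, and $\bThe = \mathrm{diag}(\theta_1,\dots,\theta_p)$ with $\theta_1 \geq \cdots \geq \theta_p \geq 0$.

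Next I would exploit the freedom in the choice of orthonormal frame for $\bom$: replace $\bY_0$ by $\bY := \bY_0 \bV_0$, which still lies in $\pi^{-1}(\bom)$ since $\bV_0 \in \mathrm{O}(p)$, and correspondingly replace $\bZ_0$ by $\bZ := \bZ_0 \bV_0 = \bU_0 \bThe$, which is the horizontal lift of $v$ in $\text{Hor}_{\bY}$ by the third bullet following~\eqref{eq:Hor_Lift}. Writing $\bY = [\by_1,\dots,\by_p]$ for the columns of $\bY_0\bV_0$ and $\by_{p+1},\dots,\by_{2p}$ for the columns of $\bU_0$, the relation $\bZ_0^T\bY_0 = \mathbf{0}$ transported through $\bV_0$ gives $\bU_0^T \bY = \mathbf{0}$, hence $\langle \by_{p+i}, \by_j\rangle = 0$ for all $1\leq i \leq p$, $1\leq j\leq p$; together with $\bY^T\bY = \bI_p$ and $\bU_0^T\bU_0 = \bI_p$ this shows that $\by_1,\dots,\by_{2p}$ are orthonormal. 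By construction $\bZ = \bU_0\bThe = [\theta_1\by_{p+1},\dots,\theta_p\by_{2p}] \in \text{Hor}_{\bY}$, which is the stated form.

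Finally, since $2p \leq n$, the orthonormal set $\{\by_1,\dots,\by_{2p}\}$ can be completed to a full orthonormal basis $\by_1,\dots,\by_n$ of $\RR^n$ (Gram--Schmidt on a complementary subspace), and this completion does not affect any of the relations above. This finishes the construction. I do not anticipate a serious obstacle here; the only point requiring a little care is the bookkeeping that the SVD of the horizontal lift is compatible with the quotient structure — specifically, that absorbing the right singular factor $\bV_0$ into the frame $\bY_0$ keeps us in the same fiber while diagonalizing the lift — which is exactly what the equivariance property in the third bullet after~\eqref{eq:Hor_Lift} guarantees.
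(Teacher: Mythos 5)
Your proposal is correct and follows essentially the same route as the paper's proof: take any representative $\bY_0\in\pi^{-1}(\bom)$ and its horizontal lift, perform a thin SVD, absorb the right singular factor into the frame via the $\mathrm{O}(p)$-equivariance of horizontal lifts, and read off the orthonormal family from the left singular vectors before completing it to a basis of $\RR^n$. The only point both you and the paper gloss over equally is that when some $\theta_i=0$ the corresponding columns of $\bU_0$ must be \emph{chosen} orthogonal to the span of $\bY$ (which is possible since $2p\leq n$), as the relation $\bThe\bU_0^{T}\bY_0=\mathbf{0}$ alone does not force it.
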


\begin{proof}
	Let us consider any $\bY\in \pi^{-1}(\bom)$ and a horizontal lift $\bZ$ of $v$ such that $\bZ^{T}\bY=\mathbf{0}$. We define a thin singular value decomposition of $\bZ$, so we can find orthonormal vectors $\uu_1,\dots,\uu_p$ in $\RR^n$ and $\vv_1,\dots,\vv_p$ in $\RR^p$ such that
	\begin{equation*}
	\bZ=\sum \theta_i \uu_i \vv_i^{T},\quad \theta_{1}\geq \cdots \geq \theta_p\geq 0.
	\end{equation*}
	From the condition $\bZ^T\bY=\mathbf{0}$ we thus deduce that $\by_1,\dots,\by_p,\uu_1,\dots,\uu_p$ is a family of orthonormal vectors. Taking now the matrix $\bP:=[\vv_1,\dots,\vv_p]\in \mathrm{O}(p)$ and $\bY':=\bY\bP\in \pi^{-1}(\bom)$, we obtain
	\begin{equation*}
	\bZ':=[\theta_1\by_{p+1},\cdots,\theta_{p}\by_{2p}]\in \text{Hor}_{\bY\bP},\quad \by_{p+i}:=\uu_i,
	\end{equation*}
	so we can conclude.
\end{proof}

\begin{rem}
	In the case when $2p>n$, that is $p>n-p$, then we can only write a horizontal lift as
	\begin{equation*}
	\bZ=[\theta_{1}\by_{p+1},\cdots,\theta_{n-p}\by_{n},\underbrace{\mathbf{0},\dots,\mathbf{0}}_{2p-n \text{ times }}],\quad \theta_{1}\geq \cdots \geq \theta_{n-p}\geq 0.
	\end{equation*}
\end{rem}

\subsection{Geodesics and distance on Grassmann manifolds}

The Grassmann manifold $\mathcal{G}(p,n)$ being equipped with a Riemannian metric, it is possible to define the length  of any curve $c:[0;1]\rightarrow \mathcal{G}(p,n)$:
\begin{equation}\label{eq:Curve_Lenght}
L(c)=\int_{0}^{1} \langle \dot{c}(t),\dot{c}(t)\rangle_{c(t)}\text{d}t
\end{equation}
and so the associated \emph{Riemannian distance}  
\begin{equation}\label{eq:Riem_Distance}
d_{r}(\bom,\bom'):=\Inf \{L(c),\quad c(0)=\bom,c(1)=\bom'\}.
\end{equation}
To obtain an explicit computation of such a distance, one can use the \emph{geodesics} obtained from the Riemannian metric and its associated \emph{Levi-Civita connection}~\cite{Gallot1990,Lee2013} (see also~\cite[III.6]{Kobayashi1996}). First recall that for Grassmann manifold, geodesics are obtained explicitly~\cite{Kozlov1997,Absil2004}:
\begin{thm}\label{thm:Eq_Geo_Grass}
	Let $\bom\in \mathcal{G}(p,n)$ and $v\in T_{\bom}\mathcal{G}(p,n)$ with horizontal lift given by $\bZ\in \text{Hor}_{\bY}$, where $\pi(\bY)=\bom$ and $\bY^{T}\bY=\bI_p$. Let $\bZ=\bU\bThe\bV^{T}$ be a thin singular value decomposition of $\bZ$. Then
	\begin{equation}\label{eq:geo_v}
	\alpha_{v} : \: t\in \RR\mapsto \pi\left( \bY\bV\cos(t\bThe)+\bU\sin(t\bThe)\right) \in \mathcal{G}(p,n)
	\end{equation}
	is the unique maximal geodesic such that $\alpha_{v}(0)=\bom$ and $\dot{\alpha_{v}}(0)=v$, maximality meaning here that such curve is defined on all $\RR$.
\end{thm}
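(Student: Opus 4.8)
The plan is to verify that the curve $\alpha_v$ defined in \eqref{eq:geo_v} satisfies the geodesic equation, by lifting the computation to the Stiefel manifold $\mathcal{S}t^c(p,n)$ via the submersion $\pi$, and then invoking uniqueness of geodesics from the theory of ODEs. First I would, using Lemma~\ref{lem:Hor_Lift_Intrisic}, reduce to a convenient normal form: choose an orthonormal basis $\by_1,\dots,\by_n$ of $\RR^n$ so that $\bY=[\by_1,\dots,\by_p]$ and the horizontal lift $\bZ$ has the diagonal form $\bZ=[\theta_1\by_{p+1},\dots,\theta_p\by_{2p}]$, i.e.\ $\bV=\bI_p$ and $\bU=[\by_{p+1},\dots,\by_{2p}]$ in the thin SVD. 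This is harmless because both the geodesic equation and the expression \eqref{eq:geo_v} are equivariant under the right $\mathrm{O}(p)$-action and under a change of orthonormal basis of $\RR^n$; the general case follows by substituting back $\bY\bV$ and $\bU$.

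Next I would define the lifted curve $\widehat{\alpha}_v(t):=\bY\bV\cos(t\bThe)+\bU\sin(t\bThe)$ in $\VecMat{n}{p}$ and check three things: (i) $\widehat{\alpha}_v(t)^T\widehat{\alpha}_v(t)=\bI_p$ for all $t$, so the curve genuinely lies in $\mathcal{S}t^c(p,n)$ — this is a direct computation using $\bY^T\bY=\bI_p$, $\bU^T\bU=\bI_p$, $\bY^T\bU=\mathbf{0}$ (the last from $\bZ^T\bY=\mathbf{0}$), together with the identity $\cos^2(t\bThe)+\sin^2(t\bThe)=\bI_p$; (ii) $\dot{\widehat{\alpha}}_v(t)$ is horizontal at $\widehat{\alpha}_v(t)$, i.e.\ $\dot{\widehat{\alpha}}_v(t)^T\widehat{\alpha}_v(t)=\mathbf{0}$, again a short trigonometric computation; (iii) $\widehat{\alpha}_v(0)=\bY$ and $\dot{\widehat{\alpha}}_v(0)=\bU\bThe\bV^T=\bZ$, so $\alpha_v:=\pi\circ\widehat{\alpha}_v$ satisfies $\alpha_v(0)=\bom$ and $\dot\alpha_v(0)=\mathrm{d}\pi_{\bY}\cdot\bZ=v$. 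Then I would compute the second derivative $\ddot{\widehat{\alpha}}_v(t)=-\widehat{\alpha}_v(t)\,\bV\bThe^2\cos(t\bThe)\cdots$ — more precisely, one finds $\ddot{\widehat{\alpha}}_v(t)=-\widehat{\alpha}_v(t)\,(\bV\cos(t\bThe))^{-1}\bThe^2(\bV\cos(t\bThe))$ type expression — and show it is \emph{vertical}, i.e.\ of the form $\widehat{\alpha}_v(t)\,\bS(t)$ with $\bS(t)$ symmetric $p\times p$; equivalently that its horizontal part vanishes. Since for a Riemannian submersion a curve downstairs is a geodesic precisely when its horizontal lift has acceleration with vanishing horizontal component, this shows $\alpha_v$ is a geodesic.

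The main obstacle I anticipate is the acceleration computation in step three: in the reduced coordinates $\bV=\bI_p$ it is clean, because $\ddot{\widehat{\alpha}}_v(t)=-\bY\bThe^2\cos(t\bThe)-\bU\bThe^2\sin(t\bThe)=-\widehat{\alpha}_v(t)\,\bThe^2$ when $\bY^T\bU=\mathbf 0$ and the blocks are diagonal, so the acceleration is manifestly a left-multiple (by the symmetric matrix $-\bThe^2$) of $\widehat{\alpha}_v(t)$, hence vertical. Without the normal form from Lemma~\ref{lem:Hor_Lift_Intrisic} one must carry $\bU,\bV$ and the mixed terms through, which obscures the symmetry; so the real work is organizing the reduction so that this computation stays transparent. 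Finally, for maximality and uniqueness: the formula \eqref{eq:geo_v} is defined and smooth for all $t\in\RR$, and by the standard existence--uniqueness theorem for the geodesic ODE (a second-order ODE on the tangent bundle, with smooth coefficients since $\mathcal{G}(p,n)$ is a smooth Riemannian manifold), there is a unique maximal geodesic with prescribed initial position $\bom$ and velocity $v$; since $\alpha_v$ is such a geodesic defined on all of $\RR$, it is the unique maximal one. (That geodesics on the compact manifold $\mathcal{G}(p,n)$ are complete, i.e.\ defined on all $\RR$, also follows from Hopf--Rinow, but here it is seen directly from the explicit formula.)
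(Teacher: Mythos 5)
Your proposal is essentially sound, but note first that the paper does not actually prove Theorem~\ref{thm:Eq_Geo_Grass}: it is recalled as a known result with citations to Kozlov and to Absil--Mahony--Sepulchre, and only the auxiliary fact that the lifted curve stays in $\mathcal{S}t^{c}(p,n)$ is proved there (Lemma~\ref{lem:Geod_Inside_Stiefel_Compact}). So there is no in-paper argument to compare against; what you have written is the standard Riemannian-submersion proof from the literature, and it works. Two corrections to the write-up are needed. First, a terminological error: a matrix of the form $\widehat{\alpha}_v(t)\,\bS(t)$ with $\bS(t)$ symmetric is \emph{normal} to the Stiefel manifold inside $\VecMat{n}{p}$, not \emph{vertical} (the vertical space at $\mathbf{W}$ is $\{\mathbf{W}\mathbf{A}:\ \mathbf{A}^{T}=-\mathbf{A}\}$, tangent to the fiber $\mathbf{W}\,\mathrm{O}(p)$). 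What your computation actually shows is that the ambient acceleration is normal to $\mathcal{S}t^{c}(p,n)$, hence the Levi-Civita acceleration inside the Stiefel manifold vanishes entirely, so in particular its horizontal component vanishes and the projected curve is a geodesic of $\mathcal{G}(p,n)$; your parenthetical ``equivalently that its horizontal part vanishes'' is the statement to keep. Second, the reduction to $\bV=\bI_p$ is unnecessary for the acceleration step: since $\tfrac{d^{2}}{dt^{2}}\cos(t\bThe)=-\cos(t\bThe)\bThe^{2}$ and likewise for $\sin$, one gets directly $\ddot{\widehat{\alpha}}_v(t)=-\widehat{\alpha}_v(t)\,\bThe^{2}$ with no mixed terms, in full generality; the expression involving $(\bV\cos(t\bThe))^{-1}$ that you sketched is both incorrect and ill-defined when $\cos(t\bThe)$ is singular. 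Minor further points: $\dot{\widehat{\alpha}}_v(0)=\bU\bThe=\bZ\bV\in\text{Hor}_{\bY\bV}$, not $\bZ$ itself, which still pushes forward to $v$ by item (3) of the paper's list of horizontal-lift properties; and when some singular values of $\bZ$ vanish one must choose the thin SVD so that $\bU^{T}\bY=\mathbf{0}$ (only $\bThe\bU^{T}\bY=\mathbf{0}$ is automatic from $\bZ^{T}\bY=\mathbf{0}$), exactly the point handled in the proof of Lemma~\ref{lem:Geod_Inside_Stiefel_Compact}. With these repairs the argument is complete, and the uniqueness and maximality conclusion via the geodesic ODE is standard.
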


\begin{rem}\label{rem:Geodesics_Intrinsic}
	There is another approach proposed in~\cite{Zhou1998} which produces a more intrinsic formula for the geodesics. Indeed, let us consider $2p\leq n$ and take back the result from Lemma~\ref{lem:Hor_Lift_Intrisic}. Then one horizontal lift of $v$ can writes
	\begin{equation*}
	\bZ=[\theta_1\by_{p+1},\cdots,\theta_{p}\by_{2p}],\quad \theta_{1}\geq \cdots \geq \theta_{p}\geq 0.
	\end{equation*}
	where $\bY=[\by_1,\dots,\by_p]\in \pi^{-1}(\bom)$ and $\by_1,\dots,\by_{2p}$ is an orthonormal family. The unique geodesic obtained from velocity vector $v$ is then defined by $\pi(\bY(t))$, with
	\begin{equation*}
	\bY(t)=\left[\cos(\theta_{1}t)\by_1+\sin(\theta_{1}t)\by_{p+1},\dots,\cos(\theta_{p}t)\by_p+\sin(\theta_{p}t)\by_{2p}\right].
	\end{equation*}
	We observe that the norm of the velocity vector is given by
	\begin{equation*}
	\|v\|=\sqrt{\sum \theta_{i}^2}.
	\end{equation*}
\end{rem}

In fact, all matrices given by~\eqref{eq:geo_v} are lying in $\mathcal{S}t^c(p,n)$:
\begin{lem}\label{lem:Geod_Inside_Stiefel_Compact}
	Let $\bom\in \mathcal{G}(p,n)$ and $v\in T_{\bom}\mathcal{G}(p,n)$. Take $\bY\in \pi^{-1}(\bom)$ and $\bZ\in \text{Hor}_{\bY}$ like in statement of Theorem~\ref{thm:Eq_Geo_Grass}. Then for any $t\in \RR$ we have
	\begin{equation*}
	\bY(t):=\bY\bV\cos(t\bThe)+\bU\sin(t\bThe)\in \mathcal{S}t^c(p,n) \text{, meaning that } \bY(t)^{T}\bY(t)=\bI_p.
	\end{equation*}
\end{lem}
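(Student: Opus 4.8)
The plan is to expand the matrix product $\bY(t)^{T}\bY(t)$ block by block and show that the two cross terms vanish by the horizontality condition, leaving a Pythagorean identity. First I would record the algebraic data at hand: by hypothesis $\bY^{T}\bY=\bI_p$; from the thin SVD $\bZ=\bU\bThe\bV^{T}$ we have $\bU^{T}\bU=\bI_p$ and, since $\bV\in\mathrm{O}(p)$ is a genuine $p\times p$ orthogonal matrix, $\bV^{T}\bV=\bV\bV^{T}=\bI_p$; and because $\bZ\in\text{Hor}_{\bY}$ we have $\bZ^{T}\bY=\mathbf{0}$, that is $\bV\bThe\bU^{T}\bY=\mathbf{0}$, hence (multiplying on the left by $\bV^{T}$) $\bThe\,\bU^{T}\bY=\mathbf{0}$. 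Note also that $\cos(t\bThe)$ and $\sin(t\bThe)$ are diagonal, hence symmetric and mutually commuting, which is what licenses the manipulations below.

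The key intermediate fact is that $\sin(t\bThe)\,\bU^{T}\bY=\mathbf{0}$ with \emph{no} assumption on the zero pattern of $\bThe$: the $(i,k)$ entry is $\sin(t\theta_i)\,(\bu_i^{T}\by_k)$, which is zero when $\theta_i\neq0$ because then $\bu_i^{T}\by_k=0$ by $\bThe\,\bU^{T}\bY=\mathbf{0}$, and is zero when $\theta_i=0$ because $\sin(0)=0$. With this in hand I would expand
\begin{multline*}
\bY(t)^{T}\bY(t)=\cos(t\bThe)\bV^{T}\bY^{T}\bY\bV\cos(t\bThe)+\cos(t\bThe)\bV^{T}\bY^{T}\bU\sin(t\bThe)\\
+\sin(t\bThe)\bU^{T}\bY\bV\cos(t\bThe)+\sin(t\bThe)\bU^{T}\bU\sin(t\bThe).
\end{multline*}
The two middle terms carry the factor $\sin(t\bThe)\bU^{T}\bY$ (for the third term) or its transpose (for the second), hence vanish. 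Using $\bY^{T}\bY=\bI_p$ and $\bV^{T}\bV=\bI_p$, the first term reduces to $\cos(t\bThe)\bV^{T}\bV\cos(t\bThe)=\cos^{2}(t\bThe)$; using $\bU^{T}\bU=\bI_p$, the last term reduces to $\sin^{2}(t\bThe)$. Adding, $\bY(t)^{T}\bY(t)=\cos^{2}(t\bThe)+\sin^{2}(t\bThe)=\bI_p$, which is the claim.

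The only real point requiring care — the ``obstacle'', such as it is — is precisely the possible presence of zero singular values in $\bThe$, which prevents cancelling $\bThe$ in $\bThe\,\bU^{T}\bY=\mathbf{0}$ to conclude $\bU^{T}\bY=\mathbf{0}$ outright; the observation that $\sin(t\bThe)$ carries a matching zero in those slots handles it cleanly. As an alternative in the range $2p\leq n$ one could sidestep the SVD bookkeeping by invoking the intrinsic representation of Lemma~\ref{lem:Hor_Lift_Intrisic} and Remark~\ref{rem:Geodesics_Intrinsic}: there the columns of $\bY(t)$ are $\cos(\theta_i t)\by_i+\sin(\theta_i t)\by_{p+i}$ with $\by_1,\dots,\by_{2p}$ orthonormal, and pairwise orthonormality of these $p$ columns is immediate from $\cos^{2}+\sin^{2}=1$. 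I would nonetheless keep the SVD computation as the main argument, since Theorem~\ref{thm:Eq_Geo_Grass} and the present lemma are stated for all $p\leq n$, not only $2p\leq n$.
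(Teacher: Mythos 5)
Your proof is correct and follows essentially the same route as the paper: expand $\bY(t)^{T}\bY(t)$, use $\bZ^{T}\bY=\mathbf{0}$ to get $\bThe\,\bU^{T}\bY=\mathbf{0}$ and hence $\sin(t\bThe)\bU^{T}\bY=\mathbf{0}$, so the cross terms vanish and $\cos^{2}(t\bThe)+\sin^{2}(t\bThe)=\bI_p$ remains. Your explicit entrywise treatment of the zero singular values is a welcome clarification of a step the paper passes over quickly.
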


\begin{proof}
	By direct computation we have:
	\begin{align*}
	\bY(t)^{T}\bY(t)&=\cos^2(t\bThe)+\sin^{2}(t\bThe)+\mathbf{X}+\mathbf{X}^{T},
	\quad \mathbf{X}:=\sin(t\bThe)\bU^{T}\bY\bV\cos(t\bThe) \\
	&=\bI_p+\mathbf{X}+\mathbf{X}^{T}.
	\end{align*}
	As we have $\bZ^{T}=\bU\bThe\bV^{T}$ and $\bZ^{T}\bY=0$ we deduce that
	\begin{equation*}
	\bV\bThe\bU^{T}\bY=0,\quad \bV\in \mathrm{O}(p)\implies \bThe\bU^{T}\bY=0.
	\end{equation*}
	and thus $\sin(t\bThe)\bU^{T}\bY=0$ for all $t$, which conclude the proof.
\end{proof}

As a consequence of Hopf-Rinow Theorem~\cite[Theorem 2.103]{Gallot1990}, any two points of the Grassmann manifold can be joined by a length minimizing geodesic. An explicit expression of such a geodesic is given by (see also~\cite{Kozlov2000}):
\begin{thm}\label{thm:Min_Geodeiscs_between_Two_Points}
	Let $\bom,\bom'\in \mathcal{G}(p,n)$ be any two points on the Grassmann manifold $\mathcal{G}(p,n)$. Then, for $2p\leq n$:
	\begin{enumerate}[(1)]
		\item There exists an orthonormal family $\by_1,\dots,\by_n$ of $\RR^n$ such that
		\begin{align*}
		\bY'&=[\cos(\theta_1)\by_1+\sin(\theta_1)\by_{p+1},\dots,\cos(\theta_p)\by_p+\sin(\theta_p)\by_{2p}]\in \pi^{-1}(\bom'),\\
		\bY&=[\by_1,\dots,\by_p]\in \pi^{-1}(\bom),
		\end{align*}
		with $\theta_i\in \left[0,\pi/2\right]$ are the \emph{Jordan's principal angles} between $\bY$ and $\bY'$, meaning that $\theta_i=\arccos(\sigma_{p-i+1})$, where $0\leq \sigma_p\leq \dots\leq \sigma_1$ are the singular values of $\bY^{T}\bY'$.
		\item A length minimizing geodesic from $\bom$ to $\bom'$ is given by $t\in [0,1] \mapsto \pi(\bY(t))$ with
		\begin{equation*}
		\bY(t):=[\cos(t\theta_1)\by_1+\sin(t\theta_1)\by_{p+1},\dots,\cos(t\theta_p)\by_p+\sin(t\theta_p)\by_{2p}].
		\end{equation*}
		Furthermore, such length minimizing geodesic is unique if and only if $\theta_{1}<\pi/2$.
	\end{enumerate}
	In the case $2p>n$, the same result holds using
	\begin{multline*}
	\bY'=[\cos(\theta_1)\by_1+\sin(\theta_1)\by_{p+1},\dots,\cos(\theta_{n-p})\by_{n-p}+\sin(\theta_{n-p})\by_{n-p},\\
	\by_{n-p+1},\dots,\by_p]\in \pi^{-1}(\bom).
	\end{multline*}
\end{thm}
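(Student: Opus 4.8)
The plan is to reduce everything to the singular value decomposition of the $p\times p$ matrix $\bY^{T}\bY'$: this single object produces both the normal form of part~(1) and the length bounds needed in part~(2). For part~(1), I would take arbitrary representatives $\bY,\bY'$ and a thin SVD $\bY^{T}\bY'=\bA\bSig\bB^{T}$, with $\bA,\bB\in\mathrm{O}(p)$ and $\bSig=\mathrm{diag}(\sigma_1,\dots,\sigma_p)$, $1\geq\sigma_1\geq\dots\geq\sigma_p\geq0$ (the bound $\sigma_1\le1$ because $\bY,\bY'$ have orthonormal columns). Replacing $\bY$ by $\bY\bA$ and $\bY'$ by $\bY'\bB$ — which does not change the points $\bom,\bom'$ — and reversing the column order, I may assume $\bY^{T}\bY'=\mathrm{diag}(\cos\theta_1,\dots,\cos\theta_p)$ with $\theta_i=\arccos(\sigma_{p-i+1})$, so the $\theta_i$ are the Jordan angles and $\theta_1\geq\dots\geq\theta_p\geq0$. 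Writing $\bY=[\by_1,\dots,\by_p]$, $\bY'=[\by_1',\dots,\by_p']$, the relations $\by_i^{T}\by_j'=\delta_{ij}\cos\theta_i$ show that $\by_i'-\cos\theta_i\by_i$ is orthogonal to $\bom$, has norm $\sin\theta_i$, and that these residuals are pairwise orthogonal; normalising those with $\theta_i\ne0$ gives unit vectors $\by_{p+i}\perp\bom$ with $\by_i'=\cos\theta_i\by_i+\sin\theta_i\by_{p+i}$, and the indices with $\theta_i=0$ are filled in freely. Since $2p\le n$ there is room to extend $\by_1,\dots,\by_{2p}$ to an orthonormal basis of $\RR^n$; the case $2p>n$ is the same construction truncated (at most $n-p$ angles are nonzero).

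For part~(2), existence and minimality: by Remark~\ref{rem:Geodesics_Intrinsic} (equivalently Theorem~\ref{thm:Eq_Geo_Grass}) the curve $t\mapsto\pi(\bY(t))$ with $\bY(t)=[\cos(t\theta_1)\by_1+\sin(t\theta_1)\by_{p+1},\dots,\cos(t\theta_p)\by_p+\sin(t\theta_p)\by_{2p}]$ is the geodesic from $\bom$ whose initial velocity has horizontal lift $[\theta_1\by_{p+1},\dots,\theta_p\by_{2p}]$, of norm $(\sum\theta_i^2)^{1/2}$; since geodesics have constant speed and $\bY(0)\in\pi^{-1}(\bom)$, $\bY(1)\in\pi^{-1}(\bom')$, this curve joins $\bom$ to $\bom'$ with length $(\sum\theta_i^2)^{1/2}$, whence $d_r(\bom,\bom')\le(\sum\theta_i^2)^{1/2}$. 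For the reverse inequality I would invoke the Hopf--Rinow theorem~\cite{Gallot1990} to write $\bom'=\Exp_{\bom}(w)$ with $\|w\|=d_r(\bom,\bom')$; writing its horizontal lift as $\bZ=\bU\bThe\bV^{T}\in\text{Hor}_{\bY}$, $\bThe=\mathrm{diag}(\theta_1',\dots,\theta_p')$, we have $\|w\|^2=\sum(\theta_i')^2$, and by Theorem~\ref{thm:Eq_Geo_Grass} the endpoint representative $\bY(1)=\bY\bV\cos\bThe+\bU\sin\bThe$ satisfies $\bY^{T}\bY(1)=\bV\cos\bThe$ (using $\sin\bThe\,\bU^{T}\bY=\mathbf{0}$, as in the proof of Lemma~\ref{lem:Geod_Inside_Stiefel_Compact}). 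Hence the singular values $|\cos\theta_i'|$ of $\bY^{T}\bY(1)$ equal, as a multiset, those of $\bY^{T}\bY'$, i.e. the $\cos\theta_i$; since $\arccos(|\cos\theta_i'|)\le\theta_i'$ with equality iff $\theta_i'\le\pi/2$, summing gives $\|w\|^2=\sum(\theta_i')^2\ge\sum\theta_i^2$. Therefore $d_r(\bom,\bom')=(\sum\theta_i^2)^{1/2}$ and the explicit curve is length-minimizing.

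For uniqueness, equality in the previous estimate forces $\theta_i'=\theta_i$ for every $i$; assuming $\theta_1<\pi/2$ this means every singular value of the horizontal lift $\bZ$ of any minimizing direction is $<\pi/2$, so $\cos\bThe$ is invertible and $\bY^{T}\bY'$ is nonsingular. I would then note that the matrix $\bM:=\bY'(\bY^{T}\bY')^{-1}-\bY$ is independent of the representative chosen for $\bom'$, is horizontal ($\bM^{T}\bY=\mathbf{0}$), and — substituting $\bY(1)=\bY\bV\cos\bThe+\bU\sin\bThe$ — equals $\bU\tan(\bThe)\bV^{T}$. Since $\bZ$ is recovered from $\bM$ by applying $\arctan$ to its singular values, an operation unambiguous even when singular values repeat (because $\arctan:[0,\infty)\to[0,\pi/2)$ is a bijection, so on a degenerate block it acts as a scalar and commutes with the remaining rotational freedom), the lift $\bZ$, hence $w$, hence the minimizing geodesic, is uniquely determined by $\bom$ and $\bom'$. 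Conversely, when $\theta_1=\pi/2$ I would exhibit two distinct minimizers from the normal form of part~(1), namely the curves sending $\by_1\mapsto\cos(t\theta_1)\by_1\pm\sin(t\theta_1)\by_{p+1}$ and leaving the other blocks unchanged: both join $\bom$ to $\bom'$ (since $\pm\by_{p+1}$ span the same line) with length $(\sum\theta_i^2)^{1/2}$, but have opposite initial velocities in the first slot.

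The routine CS-decomposition bookkeeping of part~(1) and the constant-speed length computation are straightforward; the delicate point is uniqueness, where one must prevent repeated principal angles and the $\mathrm{O}(p)$ gauge freedom (in the representatives and in the SVD of $\bZ$) from producing extra minimizers. The device that makes this clean is never to work with the SVD of $\bZ$ itself but only with the gauge-invariant matrix $\bM=\bY'(\bY^{T}\bY')^{-1}-\bY$, which is well-defined precisely in the regime $\theta_1<\pi/2$; I expect the main effort to go into checking carefully that $\bM$ reconstructs $\bZ$ without ambiguity across degenerate blocks, and that in the boundary case $\theta_1=\pi/2$ the exhibited pair of geodesics is genuinely a pair of distinct length-minimizers.
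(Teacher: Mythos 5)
Your proof is correct. Part (1) is essentially the paper's own argument: an SVD of $\bY^{T}\bY'$, absorption of the orthogonal factors into the representatives, and construction of $\by_{p+1},\dots,\by_{2p}$ by normalising the residuals $\by_i'-\cos(\theta_i)\by_i$. Part (2), however, follows a genuinely different route. The paper writes a competing geodesic as $t\mapsto\pi([\cos(\alpha_1 t)\by_1+\sin(\alpha_1 t)\by_{p+1},\dots])$ \emph{in the same adapted frame} $\by_1,\dots,\by_{2p}$ produced in part (1), deduces $\alpha_i=\theta_i+k_i\pi$ from $\pi(\bY(1))=\bom'$, and minimises $\sum(\theta_i+k_i\pi)^2$ over $k_i\in\mathbb{Z}$; non-uniqueness is then read off from $\alpha_i=-\theta_i$ forcing $\theta_i=\pi/2$. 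You instead take an arbitrary minimizing geodesic $\Exp_{\bom}(tw)$ supplied by Hopf--Rinow, compute $\bY^{T}\bY(1)=\bV\cos\bThe$ from the horizontality relation, and compare the representative-independent singular values of $\bY^{T}\bY'$ with the angles $\theta_i'$ of the lift via $\arccos|\cos\theta_i'|\le\theta_i'$. This buys something real: the paper's step tacitly assumes the competing geodesic is diagonalised in the frame of part (1), which is not automatic (Remark~\ref{rem:Geodesics_Intrinsic} only adapts a frame to the geodesic's \emph{own} initial velocity, and when principal angles repeat the frame is far from unique), whereas your multiset comparison of singular values is frame-free and closes that gap. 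Your uniqueness argument via the gauge-invariant matrix $\mathbf{M}=\bY'(\bY^{T}\bY')^{-1}-\bY$ and the well-definedness of $\bU\arctan(\bSig)\bV^{T}$ across degenerate SVD blocks is likewise sound (it is in effect a proof that $\Log_{\bom}$ inverts $\Exp_{\bom}$ on $\mathrm{U}_{\bom}$, anticipating Lemma~\ref{lem:Exp_and_Log}), and your explicit pair of minimizers with opposite signs in the first slot when $\theta_1=\pi/2$ is a cleaner witness of non-uniqueness than the paper's congruence computation. The price is that your argument is longer and leans on Hopf--Rinow for existence, which the paper also invokes, so nothing extra is assumed.
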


\begin{proof}
	Take any $\bY\in \pi^{-1}(\bom)$ and $\bY'\in \pi^{-1}(\bom')$. Let now consider a \emph{reordered} SVD of the square matrix $\bY^{T}\bY'$:
	\begin{equation*}
		\bY^{T}\bY'=\bU\bSig \bV^{T},\quad 
		\bSig=\begin{pmatrix}
		\sigma_p & \dots & 0 \\
		\vdots & \ddots & \vdots \\
		0 & \dots & \sigma_{1}
		\end{pmatrix},\quad \bU,\bV\in \mathrm{O}(p),
	\end{equation*}
	with singular values $0\leq \sigma_p\leq \dots \leq \sigma_{1}$. Define 
	\begin{equation*}
		\widehat{\bY}:=\bY\bU\in \pi^{-1}(\bom),\quad	\widehat{\bY}':=\bY'\bV\in \pi^{-1}(\bom')
	\end{equation*}
	and write 
	\begin{equation*}
		\widehat{\bY}=[\by_1,\dots,\by_p]\in \VecMat{n}{p},\quad \widehat{\bY}'=[\bx_1,\dots,\bx_p]\in \VecMat{n}{p}
	\end{equation*}
	so we can deduce from $\widehat{\bY}^{T}\widehat{\bY}'=\bSig$ the inner products 
	\begin{equation*}
		\langle \by_i,\bx_j\rangle=\sigma_{p-i+1}\delta_{ij},\quad \sigma_{p-i+1}\in [0,1].
	\end{equation*} 
	Using a direct induction on $i$, we obtain a family of orthonormal vectors $\by_{p+1},\dots,\by_{2p}$ such that
	\begin{equation*}
		\bx_i=\cos(\theta_i)\by_i+\sin(\theta_{i})\by_{p+i},\quad \theta_i:=\arccos(\sigma_{p-i+1}),\quad
		\langle \by_i,\by_{p+j}\rangle=0
	\end{equation*}
	which conclude the proof of $(1)$. 
	
	Now, from Remark~\ref{rem:Geodesics_Intrinsic}, any other geodesic from $\bom$ to $\bom'$ reads $t\mapsto \pi(\bY(t))$ with 
	\begin{equation*}
		\bY(t)=\left[\cos(\alpha_{1}t)\by_1+\sin(\alpha_{1}t)\by_{p+1},\dots,\cos(\alpha_{p}t)\by_p+\sin(\alpha_{p}t)\by_{2p}\right],\quad \pi(\bY(1))=\bom'
	\end{equation*}
	so that $\cos(\alpha_{i})=\cos(\theta_i)$ and $\alpha_{i}=\theta_i+k_i\pi$, with $k_i\in \mathbb{Z}$. We deduce that the length of this geodesic is given by
	\begin{equation*}
		\left( \sum_{i=1}^{p} (\theta_i+k_i\pi)^2\right)^{1/2}.
	\end{equation*}
	As $(\theta+k\pi)^2\geq \theta^2$ for all $k\in \mathbb{Z}$ and $\theta\in [0,\pi/2]$, we deduce length minimization for $k_i=0$. Non unicity can only occur if and only if there is non-zero $k_i\in \mathbb{Z}$ such that $\theta_{i}+k_i\pi=-\theta_i$, so that 
	\begin{equation*}
		k_i=\frac{-2\theta_{i}}{\pi}\in \mathbb{Z}-\{0\}
	\end{equation*}
	which translate into $\theta_i=\theta_{i-1}=\dots=\theta_1=\pi/2$, which conclude the proof.
\end{proof}

As a consequence of Theorem~\ref{thm:Min_Geodeiscs_between_Two_Points}, for any two points $\bom$ and $\bom'$ of $\mathcal{G}(p,n)$ the Riemannian distance is given by
\begin{equation}\label{eq:Geodesic_distance_Grassmann}
d_{r}(\bom,\bom')=\bigg(\sum_{i=1}^{p} \theta^2_i \bigg)^{1/2} 
\end{equation}
with $\theta_{i}$ the Jordan's principal angles as defined in the statement of the theorem. Finally, the \emph{diameter} of $\mathcal{G}(p,n)$ (the maximum distance between two points) is given by
\begin{equation}\label{eq:Diam_Grass}
\text{diam}=\sqrt{r}\frac{\pi}{2},\quad r=\min(p,n-p).
\end{equation}

\subsection{Exponential and logarithm map on Grassmann manifolds}

By exploiting geodesics of a Riemannian manifold, it is possible to establish local maps using \emph{normal coordinates}~\cite{Gallot1990} defined from the exponential map. 

In the case of Grassmann manifolds, the exponential map is obtained from the exact formulation of the geodesics (see Theorem~\ref{thm:Eq_Geo_Grass}). 

\begin{defn}[Exponential map]
	For any point $\bom\in \mathcal{G}(p,n)$, let consider the tangent plane $T_{\bom}\mathcal{G}(p,n)\simeq \RR^d$, with $d=p(n-p)$ the dimension of $\mathcal{G}(p,n)$. Then the exponential map is defined by
	\begin{equation}\label{eq:Def_Exp_Map}
		\Exp_{\bom}\,:\, v\in T_{\bom}\mathcal{G}(p,n) \mapsto \pi\left( \bY\bV\cos\bThe+\bU\sin\bThe\right) \in \mathcal{G}(p,n)
	\end{equation}
	where $\bY\in \pi^{-1}(\bom)$ and $\bZ=\bU\bThe\bV$ is a thin SVD of a horizontal lift $\bZ\in \text{Hor}_{\bY}$ of $v$.
\end{defn}

Such a map is only a diffeomorphism \emph{locally}, meaning that there exists some open set $\mathrm{W}\subset T_{\bom}\mathcal{G}(p,n)$ containing $0$ such that $\left(\Exp_{\bom}\right)_{\mid \mathrm{W}}$ is a diffeomorphism, which thus makes it possible to define local coordinates on $\mathrm{W}$. A first way to do so is to consider the \emph{injectivity radius} and thus the open disk:
\begin{equation}\label{eq:Disk_Inj}
\mathrm{D}_{\bom}:=\left\{v\in T_{\bom}\mathcal{G}(p,n),\quad \|v\|<\pi/2\right\},
\end{equation}
where $\pi/2$ is the injectivity radius for Grassmann manifolds~\cite{Kozlov2000}. We obtain here a local map
\begin{equation*}
	\left(\Exp_{\bom}\right)_{\mid_{\mathrm{D}_{\bom}}}\: : \: \mathrm{D}_{\bom} \longrightarrow \Exp_{\bom}\left(\mathrm{D}_{\bom}\right).
\end{equation*}
It turns out that in our case, it is possible to go beyond this injectivity radius. To do so, a logarithm map is directly define at each point of the Grassmann manifold.

First, for any point $\bom\in \mathcal{G}(p,n)$, let us define the open set
\begin{equation}\label{eq:Def_Open_Set_Log}
\mathrm{U}_{\bom}:=\{ \bom'\in \mathcal{G}(p,n),\quad \bY^{T}\bY' \text{ is invertible},\quad \pi(\bY)=\bom,\quad \pi(\bY')=\bom\}.
\end{equation}
A more geometric insight of such an open set is given by a lemma directly deduced from Jordan's principal angles (see Theorem~\ref{thm:Min_Geodeiscs_between_Two_Points}):
\begin{lem}\label{lem:COnd_Angle_Open_set}
	For any $\bom,\bom'\in \mathcal{G}(p,n)$, take $0\leq \theta_p\leq \dots\leq \theta_1\leq\pi/2$ to be their corresponding Jordan's principal angles. Then $\bom'\in\mathrm{U}_{\bom}$ if and only if $\theta_1<\pi/2$. 
\end{lem}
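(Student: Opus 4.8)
The plan is to reduce the invertibility condition appearing in the definition~\eqref{eq:Def_Open_Set_Log} of $\mathrm{U}_{\bom}$ to a strict positivity condition on the smallest singular value of $\bY^{T}\bY'$, and then to read off that singular value directly from the Jordan principal angles, using part (1) of Theorem~\ref{thm:Min_Geodeiscs_between_Two_Points}.

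First I would check that the condition ``$\bY^{T}\bY'$ is invertible'' does not depend on the chosen representatives $\bY\in \pi^{-1}(\bom)$ and $\bY'\in \pi^{-1}(\bom')$, so that $\mathrm{U}_{\bom}$ is well defined: any other representatives have the form $\bY\bP$ and $\bY'\bP'$ with $\bP,\bP'\in \mathrm{O}(p)$, and $(\bY\bP)^{T}(\bY'\bP')=\bP^{T}(\bY^{T}\bY')\bP'$ is invertible if and only if $\bY^{T}\bY'$ is, since $\bP$ and $\bP'$ are invertible.

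Next, fixing representatives $\bY,\bY'$, the square matrix $\bY^{T}\bY'\in \VecMat{p}{p}$ is invertible if and only if all its singular values are strictly positive, equivalently if and only if its smallest singular value is nonzero. By part (1) of Theorem~\ref{thm:Min_Geodeiscs_between_Two_Points}, the singular values of $\bY^{T}\bY'$ are $\sigma_{1}\geq \dots \geq \sigma_{p}\geq 0$ with $\theta_{i}=\arccos(\sigma_{p-i+1})$, that is $\sigma_{p-i+1}=\cos\theta_{i}$; in particular the smallest singular value is $\sigma_{p}=\cos\theta_{1}$. Hence $\bom'\in \mathrm{U}_{\bom}$ if and only if $\cos\theta_{1}\neq 0$, and since $\theta_{1}\in[0,\pi/2]$ this is equivalent to $\theta_{1}<\pi/2$, which is the assertion. (In the case $2p>n$ one argues identically from the corresponding statement of Theorem~\ref{thm:Min_Geodeiscs_between_Two_Points}, the additional angles being zero and hence irrelevant to $\theta_{1}$.)

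There is essentially no hard step in this proof; the only points requiring care are the well-definedness with respect to the fiber and the reversed indexing between the singular values $\sigma_{j}$ of $\bY^{T}\bY'$ and the principal angles $\theta_{i}$ as stated in Theorem~\ref{thm:Min_Geodeiscs_between_Two_Points}, which is exactly what makes the largest angle $\theta_{1}$ correspond to the smallest singular value $\sigma_{p}$.
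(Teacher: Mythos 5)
Your proof is correct and is precisely the argument the paper intends: the lemma is stated there as being ``directly deduced'' from part (1) of Theorem~\ref{thm:Min_Geodeiscs_between_Two_Points}, and your deduction---that the singular values of $\bY^{T}\bY'$ are the cosines of the principal angles, so invertibility is equivalent to $\sigma_p=\cos\theta_1\neq 0$, i.e.\ $\theta_1<\pi/2$---is exactly that. Your added check that invertibility of $\bY^{T}\bY'$ is independent of the representatives in the fibers is a worthwhile detail the paper leaves implicit.
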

From now on, let us suppose that $2p\leq n$, while the case $2p>n$ is straightforward. 

Following Theorem~\ref{thm:Min_Geodeiscs_between_Two_Points}, we can find an orthonormal family $\by_1,\dots,\by_n$ of $\RR^n$ such that 
\begin{align}\label{eq:Intrinsic_Jordan_Angles}
\bY'&=[\cos(\theta_1)\by_1+\sin(\theta_1)\by_{p+1},\dots,\cos(\theta_p)\by_p+\sin(\theta_p)\by_{2p}]\in \pi^{-1}(\bom'),\\
\bY&=[\by_1,\dots,\by_p]\in \pi^{-1}(\bom), \notag
\end{align}
and then $\bY^T\bY'=\cos\bThe$. The classical definition of the logarithm map~\cite{Mosquera2019b} makes use of a thin SVD of
\begin{equation}\label{eq:SVD_for_Log_Map}
	\bY'\left(\bY^{T}\bY'\right)^{-1}-\bY=[\tan(\theta_{1})\by_{p+1},\dots,\tan(\theta_p)\by_{2p}]
\end{equation}
where singular values are well-defined (as a consequence of Lemma~\ref{lem:COnd_Angle_Open_set}). From all this, it is possible to have the following definition, using the $\arctan$ function:
\begin{defn}[Logarithm map in Grassmann manifolds]\label{def:Log_map_Grass}
	For any $\bom\in \mathcal{G}(p,n)$, take the open set $\mathrm{U}_{\bom}$ defined by~\eqref{eq:Def_Open_Set_Log}. Then the logarithm map at $\bom$ is given by
	\begin{equation*}
		\Log_{\bom}\,: \, \bom'\in \mathrm{U}_{\bom}\mapsto \Log_{\bom}(\bom')\in T_{\bom}\mathcal{G}(p,n)
	\end{equation*}
	where an horizontal lift $\bZ$ of $\Log_{\bom}(\bom')$ is defined using a thin SVD
	\begin{equation*}
		\bY'\left(\bY^{T}\bY'\right)^{-1} -\bY=\bU\bSig \bV^{T},\quad \bY'\in \pi^{-1}(\bom'),
	\end{equation*}
	so that
	\begin{equation*}
		\bZ:=\bU \arctan(\bSig) \bV^{T}.
	\end{equation*}
\end{defn}

As a direct consequence of~\eqref{eq:Intrinsic_Jordan_Angles} and~\eqref{eq:SVD_for_Log_Map}, the horizontal lift $\bZ$ of $v=\Log_{\bom}(\bom')$ encodes the Jordan's principal angles between $\bom$ and $\bom'$, as we can write in the orthonormal basis $\by_1,\dots,\by_n$ of $\RR^n$:
\begin{equation*}
	\bZ=[\theta_{1}\by_{p+1},\dots,\theta_p\by_{2p}].
\end{equation*}
From Remark~\ref{rem:Geodesics_Intrinsic}, we deduce that we have $\Exp_{\bom}(v)=\bom'$, leading to:
\begin{lem}\label{lem:Exp_and_Log}
	For any $\bom\in \mathcal{G}(p,n)$, the map $\Log_{\bom}$ is a diffeomorphism from $\mathrm{U}_{\bom}$ onto $\Log_{\bom}\left(\mathrm{U}_{\bom}\right)$, with inverse map given by the exponential map at $\bom$:
	\begin{equation*}
	\Exp_{\bom}\circ \Log_{\bom}=\emph{\text{id}}_{\mathrm{U}_{\bom}}.
	\end{equation*}
\end{lem}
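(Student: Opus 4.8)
The plan is to deduce the lemma from the explicit computation already sketched just before the statement, supplemented by two soft arguments --- injectivity and smoothness. First I would fix $\bom'\in\mathrm{U}_{\bom}$ and observe, via Lemma~\ref{lem:COnd_Angle_Open_set}, that its Jordan principal angles with $\bom$ satisfy $0\le\theta_p\le\dots\le\theta_1<\pi/2$. Theorem~\ref{thm:Min_Geodeiscs_between_Two_Points} then supplies an orthonormal family $\by_1,\dots,\by_n$ of $\RR^n$ with $\bY=[\by_1,\dots,\by_p]\in\pi^{-1}(\bom)$ and $\bY'=[\cos(\theta_1)\by_1+\sin(\theta_1)\by_{p+1},\dots,\cos(\theta_p)\by_p+\sin(\theta_p)\by_{2p}]\in\pi^{-1}(\bom')$, so that $\bY^{T}\bY'=\cos\bThe$ and $\bY'(\bY^{T}\bY')^{-1}-\bY=[\tan(\theta_1)\by_{p+1},\dots,\tan(\theta_p)\by_{2p}]$. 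A thin SVD of the latter matrix can be read off directly ($\bU=[\by_{p+1},\dots,\by_{2p}]$, $\bSig=\mathrm{diag}(\tan\theta_1,\dots,\tan\theta_p)$, $\bV=\bI_p$), so the horizontal lift prescribed by Definition~\ref{def:Log_map_Grass} is $\bZ=\bU\arctan(\bSig)\bV^{T}=[\theta_1\by_{p+1},\dots,\theta_p\by_{2p}]\in\text{Hor}_{\bY}$. Feeding this $\bZ$ into~\eqref{eq:Def_Exp_Map} and using the intrinsic form of the geodesics (Remark~\ref{rem:Geodesics_Intrinsic}) then gives $\Exp_{\bom}(\Log_{\bom}(\bom'))=\pi(\bY')=\bom'$, i.e.\ $\Exp_{\bom}\circ\Log_{\bom}=\mathrm{id}_{\mathrm{U}_{\bom}}$.

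From here I would extract the diffeomorphism statement. Since $\Log_{\bom}$ admits a left inverse it is injective, hence a bijection onto $\mathrm{V}_{\bom}:=\Log_{\bom}(\mathrm{U}_{\bom})$; moreover for $v=\Log_{\bom}(\bom')\in\mathrm{V}_{\bom}$ we get $\Exp_{\bom}(v)=\bom'\in\mathrm{U}_{\bom}$, so $\Exp_{\bom}$ maps $\mathrm{V}_{\bom}$ into $\mathrm{U}_{\bom}$ and $\Exp_{\bom}|_{\mathrm{V}_{\bom}}=(\Log_{\bom})^{-1}$. Because $\Exp_{\bom}$ is globally smooth on $T_{\bom}\mathcal{G}(p,n)$ --- its formula~\eqref{eq:Def_Exp_Map} being that of the Riemannian exponential map of the compact manifold $\mathcal{G}(p,n)$, cf.\ Theorem~\ref{thm:Eq_Geo_Grass} --- the inverse $(\Log_{\bom})^{-1}$ is automatically smooth, and it only remains to prove that $\Log_{\bom}$ itself is smooth.

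That last point is the one I expect to need care, since the thin SVD in Definition~\ref{def:Log_map_Grass} is not a smooth function of its argument at matrices with coinciding or vanishing singular values. The device I would use is to rewrite $\Log_{\bom}$ without any SVD. Around any $\bom'\in\mathrm{U}_{\bom}$ choose a smooth local section $\bom''\mapsto\bY''$ of the submersion $\pi$ from~\eqref{eq:Sub_Stiefel_NC}; then $\mathbf{M}(\bom''):=\bY''(\bY^{T}\bY'')^{-1}-\bY$ is independent of the $\mathrm{O}(p)$-freedom in $\bY''$, is smooth in $\bom''$, is defined on the open set $\mathrm{U}_{\bom}$ (invertibility of $\bY^{T}\bY''$ being an open condition), and satisfies $\mathbf{M}^{T}\bY=0$. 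The horizontal lift prescribed by Definition~\ref{def:Log_map_Grass} then equals $\bZ=\mathbf{M}\,g(\mathbf{M}^{T}\mathbf{M})$, where $g(s)=\arctan(\sqrt{s})/\sqrt{s}=1-s/3+s^{2}/5-\cdots$ is real-analytic on $[0,\infty)$ (indeed holomorphic on $\mathbb{C}\setminus(-\infty,-1]$) and the singular values $\tan\theta_i$ of $\mathbf{M}$ keep the spectrum of $\mathbf{M}^{T}\mathbf{M}$ inside $[0,\infty)$. As the symmetric functional calculus $\mathbf{A}\mapsto g(\mathbf{A})$ is smooth on positive-semidefinite matrices, $\bZ$ --- hence $\Log_{\bom}$ --- depends smoothly on $\bom''$. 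Collecting the three steps: $\Log_{\bom}\colon\mathrm{U}_{\bom}\to\mathrm{V}_{\bom}$ is a smooth bijection with smooth inverse $\Exp_{\bom}|_{\mathrm{V}_{\bom}}$, i.e.\ a diffeomorphism, and $\Exp_{\bom}\circ\Log_{\bom}=\mathrm{id}_{\mathrm{U}_{\bom}}$.
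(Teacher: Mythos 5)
Your proof of the identity $\Exp_{\bom}\circ\Log_{\bom}=\mathrm{id}_{\mathrm{U}_{\bom}}$ is exactly the paper's argument: put $\bom'$ in the Jordan normal position of Theorem~\ref{thm:Min_Geodeiscs_between_Two_Points}, read off the thin SVD of $\bY'(\bY^{T}\bY')^{-1}-\bY$ to get the horizontal lift $\bZ=[\theta_1\by_{p+1},\dots,\theta_p\by_{2p}]$, and invoke Remark~\ref{rem:Geodesics_Intrinsic}. Where you genuinely go beyond the paper is the diffeomorphism claim: the paper states the lemma as a direct consequence of this computation and never addresses smoothness of $\Log_{\bom}$ (implicitly it is recovered later, since Theorems~\ref{thm:Exp_Diff_Cut_Locus} and~\ref{thm:Cut_Locus_Grass} identify $\Log_{\bom}(\mathrm{U}_{\bom})$ with $\mathrm{V}_{\bom}$, on which $\Exp_{\bom}$ restricts to a diffeomorphism, so $\Log_{\bom}$ is its inverse). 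Your SVD-free rewriting $\bZ=\mathbf{M}\,g(\mathbf{M}^{T}\mathbf{M})$ with $g(s)=\arctan(\sqrt{s})/\sqrt{s}$ analytic on $[0,\infty)$ is correct (one checks $\mathbf{M}=\bU\bSig\bV^{T}$ gives $\mathbf{M}\,g(\mathbf{M}^{T}\mathbf{M})=\bU\arctan(\bSig)\bV^{T}$, including at vanishing or coinciding singular values), and it honestly disposes of the non-smoothness of the SVD that the paper's Definition~\ref{def:Log_map_Grass} sweeps under the rug; your observation that $\mathbf{M}$ is invariant under the $\mathrm{O}(p)$-freedom in the representative is also the right way to see well-definedness on the quotient. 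The only mild caveat is circularity avoidance: you derive the diffeomorphism property independently of the cut-locus results, which is cleaner given that the lemma precedes Theorem~\ref{thm:Cut_Locus_Grass} in the paper's ordering. In short, same core computation, plus a self-contained and correct smoothness argument the paper omits.
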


As a conclusion of this subsection, we obtain here normal coordinates on all the open set $\mathrm{U}_{\bom}$, which is in fact an improvement compare to the open set deduced from the injectivity radius disk, thanks to the lemma:

\begin{lem}\label{lem:Cut_Better_than_Radius}
	For any $\bom\in \mathcal{G}(p,n)$ and $n,p$ such that $\min(p,n-p)\geq 2$, the open set $\mathrm{U}_{\bom}$ given by~\eqref{eq:Def_Open_Set_Log} strictly contains $\Exp_{\bom}\left(\mathrm{D}_{\bom}\right)$, with $\mathrm{D}_{\bom}$ given by~\eqref{eq:Disk_Inj}:
	\begin{equation*}
	\Exp_{\bom}\left(\mathrm{D}_{\bom}\right)\varsubsetneq \mathrm{U}_{\bom}.
	\end{equation*}
\end{lem}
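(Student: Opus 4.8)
The plan is to compare the two subsets by translating everything into Jordan's principal angles, using Lemma~\ref{lem:COnd_Angle_Open_set} and Theorem~\ref{thm:Min_Geodeiscs_between_Two_Points}. First I would establish the inclusion $\Exp_{\bom}(\mathrm{D}_{\bom})\subseteq \mathrm{U}_{\bom}$. Take $v\in \mathrm{D}_{\bom}$, so $\|v\|<\pi/2$, and let $\bZ$ be a horizontal lift with singular values $\theta_1\geq\dots\geq\theta_p\geq 0$, so that $\|v\|=(\sum_i \theta_i^2)^{1/2}<\pi/2$. In particular $\theta_1\leq \|v\|<\pi/2$. By Remark~\ref{rem:Geodesics_Intrinsic} (or the formula~\eqref{eq:Def_Exp_Map}), the point $\bom':=\Exp_{\bom}(v)$ has Jordan's principal angles with $\bom$ given exactly by $\theta_1,\dots,\theta_p$ (after reduction mod the geodesic, but here $\theta_i<\pi/2$ so they are already the principal angles). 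Since $\theta_1<\pi/2$, Lemma~\ref{lem:COnd_Angle_Open_set} gives $\bom'\in \mathrm{U}_{\bom}$. This proves the inclusion.

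Next I would show the inclusion is \emph{strict}, which is where the hypothesis $\min(p,n-p)\geq 2$ enters. The idea is to exhibit a point $\bom'\in \mathrm{U}_{\bom}$ that is not in $\Exp_{\bom}(\mathrm{D}_{\bom})$. Since $\min(p,n-p)\geq 2$, we may pick principal angles $\theta_1=\theta_2=:\theta$ with $\pi/2>\theta>\frac{\pi}{2\sqrt{2}}$ and $\theta_3=\dots=\theta_p=0$; concretely, using Theorem~\ref{thm:Min_Geodeiscs_between_Two_Points}(1) and an orthonormal family $\by_1,\dots,\by_{2p}$ of $\RR^n$ (which exists because $2p\leq n$, and the case $2p>n$ is handled by the analogous construction in the theorem), set
\begin{equation*}
\bY':=[\cos\theta\,\by_1+\sin\theta\,\by_{p+1},\ \cos\theta\,\by_2+\sin\theta\,\by_{p+2},\ \by_3,\dots,\by_p]\in\pi^{-1}(\bom').
\end{equation*}
Then $\theta_1=\theta<\pi/2$, so $\bom'\in\mathrm{U}_{\bom}$ by Lemma~\ref{lem:COnd_Angle_Open_set}. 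On the other hand, by Lemma~\ref{lem:Exp_and_Log} every point of $\mathrm{U}_{\bom}$ equals $\Exp_{\bom}(v)$ for the \emph{unique} $v=\Log_{\bom}(\bom')$ whose horizontal lift has singular values equal to the principal angles $\theta_1,\dots,\theta_p$; here $\|v\|^2=\theta_1^2+\theta_2^2=2\theta^2>\pi^2/4$, so $v\notin \mathrm{D}_{\bom}$. It remains to argue that no \emph{other} preimage under $\Exp_{\bom}$ lands in $\mathrm{D}_{\bom}$: any $w$ with $\Exp_{\bom}(w)=\bom'$ has, by the geodesic description, a horizontal lift with singular values of the form $\theta_i+k_i\pi$ ($k_i\in\mathbb Z$), and all such values have $|\theta_i+k_i\pi|\geq \theta_i$, so $\|w\|\geq \|v\|>\pi/2$. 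Hence $\bom'\notin\Exp_{\bom}(\mathrm{D}_{\bom})$, and the inclusion is strict.

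\textbf{Main obstacle.} The delicate point is the strictness: one must be careful that $\mathrm{D}_{\bom}$ is a set in the \emph{tangent space}, whereas $\mathrm{U}_{\bom}$ lives in the manifold, so $\Exp_{\bom}(\mathrm{D}_{\bom})\subsetneq \mathrm{U}_{\bom}$ requires ruling out \emph{all} tangent vectors in $\mathrm{D}_{\bom}$ mapping to the chosen $\bom'$, not merely the canonical one $\Log_{\bom}(\bom')$ — this is exactly what the bound $|\theta_i+k_i\pi|\geq\theta_i$ on singular values of alternative horizontal lifts delivers. The hypothesis $\min(p,n-p)\geq 2$ is essential and should be highlighted: when $\min(p,n-p)=1$ the Grassmannian is a projective space, $\mathrm{D}_{\bom}$ is an interval of half-length $\pi/2$, and one can check $\Exp_{\bom}(\mathrm{D}_{\bom})=\mathrm{U}_{\bom}$, so the two largest principal angles cannot both be made close to $\pi/2$ and the counterexample construction above breaks down.
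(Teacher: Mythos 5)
Your proof is correct and follows essentially the same route as the paper: the inclusion comes from $\theta_1\leq\|v\|<\pi/2$, and strictness comes from exhibiting a tangent vector whose largest singular value is below $\pi/2$ but whose norm exceeds $\pi/2$ (the paper spreads the mass over all $p$ angles, you use just two — both need $\min(p,n-p)\geq 2$). Your explicit length-minimization argument ruling out other preimages in $\mathrm{D}_{\bom}$ is a welcome refinement; the paper obtains the same conclusion implicitly from the injectivity of $\Exp_{\bom}$ on $\mathrm{V}_{\bom}\supseteq\mathrm{D}_{\bom}$ given by Theorem~\ref{thm:Exp_Diff_Cut_Locus} and Theorem~\ref{thm:Cut_Locus_Grass}.
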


\begin{proof}
	The inclusion follows from Theorem~\ref{thm:Cut_Locus_Grass} as any $v\in \mathrm{D}_{\bom}$ is such that
	\begin{equation*}
	\|v\|<\frac{\pi}{2}.
	\end{equation*}
	To obtain a strict inclusion we follow Remark~\ref{rem:Geodesics_Intrinsic} in the case $2p\leq n$. Let us consider an orthonormal basis $\by_1,\dots,\by_n$ and $v$ with horizontal lift given by
	\begin{equation*}
	\bZ=[\theta_{1}\by_{p+1},\dotsc,\theta_{p}\by_{2p}].
	\end{equation*}
	Then we can find $\theta_1,\dots,\theta_p$ such that
	\begin{equation*}
	\|v\|=\left(\sum \theta_{i}^2\right)^{1/2}\geq \pi/2 \text{ and } \theta_{1}<\pi/2
	\end{equation*}
	using for instance
	\begin{equation*}
	\theta_{i}:=\alpha<\frac{\pi}{2} \text{ with } \frac{\pi}{2} \leq \sqrt{p}\alpha.
	\end{equation*}
\end{proof}

\subsection{Cut--locus and exponential map injectivity on Grassmann manifolds}\label{subsec:Cut_Locus}

In this final subsection, it is proposed to establish the link between the open set $\mathrm{U}_{\bom}$ defined by~\eqref{eq:Def_Open_Set_Log} and the cut--locus of Grassmann manifolds. Such a notion of cut--locus is particularly related to the loss of injectivity of the exponential map. As far as we know, such a result about the cut--locus was suggested in~\cite{Wong1967}, but without any clear proof nor statement.

Let us take back here the geodesic $t\in \RR\mapsto \alpha_{v}(t)$ from~\eqref{eq:geo_v}, with non-zero initial velocity $v\in T_{\bom}\mathcal{G}(p,n)$. Define now
\begin{equation*}
I_{v}:=\{t\in \RR,\quad (\alpha_v)_{\mid_{[0,t]}} \text{ is length minimal}\}=[0,\rho(v)],
\end{equation*}
where $\rho(v)$ is some bounded real number (see~\cite[Section 2.C.7]{Gallot1990}). A first result is given by~\cite[Theorem 3.77]{Gallot1990}:
\begin{thm}\label{thm:Exp_Diff_Cut_Locus}
	Let $\bom\in \mathcal{G}(p,n)$ and
	\begin{equation}\label{eq:Open_set_Cut_locus}
	\mathrm{V}_{\bom}:=\left\{v\in T_{\bom}\mathcal{G}(p,n),\quad \rho(v)>1 \right\}\cup \{0\}.
	\end{equation}
	Then $\mathrm{V}_{\bom}$ is an open neighborhood of $0\in T_{\bom}\mathcal{G}(p,n)$ and the map
	\begin{equation*}
	\left(\Exp_{\bom}\right)_{\mid_{\mathrm{V}_\bom}} \: : \: \mathrm{V}_{\bom} \longrightarrow \Exp_{\bom}(\mathrm{V}_\bom)
	\end{equation*}
	is a diffeomorphism.
\end{thm}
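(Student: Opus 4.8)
The plan is to recall the proof of this classical fact of Riemannian geometry --- it is \cite[Theorem 3.77]{Gallot1990} --- specialised to the manifold $\mathcal{G}(p,n)$, which is compact hence complete, so by Hopf--Rinow $\Exp_{\bom}$ is already defined on the whole vector space $T_{\bom}\mathcal{G}(p,n)$, every geodesic eventually stops being minimal, and the \emph{cut time} $\rho(v)$ is a finite positive number for every $v\neq 0$. Note also that $\rho$ is homogeneous of degree $-1$, i.e.\ $\rho(sv)=\rho(v)/s$ for $s>0$, since replacing $\alpha_{v}$ by $t\mapsto \alpha_{v}(st)$ merely rescales the parameter. The single nontrivial input needed is the following: \textbf{(Key Lemma)} \emph{the map $v\mapsto\rho(v)$ is continuous on $T_{\bom}\mathcal{G}(p,n)\setminus\{0\}$, and $\inf_{\|v\|=1}\rho(v)>0$} (this infimum is the injectivity radius, equal to $\pi/2$ for $\mathcal{G}(p,n)$, cf.\ \eqref{eq:Disk_Inj}).

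Granting the Key Lemma, openness of $\mathrm{V}_{\bom}$ would follow at once: $\{v:\rho(v)>1\}$ is open because $\rho$ is continuous, and by homogeneity $\rho(v)=\rho(v/\|v\|)/\|v\|\geq \big(\inf_{\|u\|=1}\rho(u)\big)/\|v\|\to+\infty$ as $v\to 0$, so a whole ball about $0$ lies in $\mathrm{V}_{\bom}$; hence $\mathrm{V}_{\bom}$ is an open neighbourhood of $0$ (it is even star-shaped with respect to $0$, since $\rho(v)>1$ and $0\le s<1$ give $\rho(sv)=\rho(v)/s>1$).

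Next I would show $\left(\Exp_{\bom}\right)_{\mid_{\mathrm{V}_{\bom}}}$ is a local diffeomorphism. For $v\neq 0$ in $\mathrm{V}_{\bom}$ the geodesic $\alpha_{v}$ is minimal on $[0,1]$ with $1<\rho(v)$, so $\Exp_{\bom}(v)=\alpha_{v}(1)$ lies strictly before the cut point of $\bom$ along $\alpha_{v}$; since the first conjugate point never occurs strictly before the cut point, $\Exp_{\bom}(v)$ is not conjugate to $\bom$ along $\alpha_{v}$, which is exactly the statement that $\mathrm{d}(\Exp_{\bom})_{v}$ is invertible, and at $v=0$ one has $\mathrm{d}(\Exp_{\bom})_{0}=\mathrm{id}$. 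For injectivity, suppose $\Exp_{\bom}(v_{1})=\Exp_{\bom}(v_{2})=\bom'$ with $v_{1},v_{2}\in\mathrm{V}_{\bom}$: each $\alpha_{v_{i}}$ is minimal on $[0,1]$, so $\|v_{1}\|=d_{r}(\bom,\bom')=\|v_{2}\|$; and since $1<\rho(v_{1})$ the point $\bom'$ is not a cut point of $\bom$, whence $\bom$ and $\bom'$ are joined by a \emph{unique} minimal geodesic (the dichotomy characterising cut points; compare Theorem~\ref{thm:Min_Geodeiscs_between_Two_Points}), forcing $v_{1}=v_{2}$. An injective local diffeomorphism between manifolds of equal dimension is a diffeomorphism onto its image, and that image is open; this gives the theorem. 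One also identifies $\Exp_{\bom}(\mathrm{V}_{\bom})=\mathcal{G}(p,n)\setminus\mathrm{Cut}(\bom)$, but that is not needed here.

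The main obstacle is the Key Lemma, i.e.\ continuity of $\rho$. Upper semicontinuity is soft: if $v_{k}\to v$ then the $\rho(v_{k})$ are bounded (by the diameter \eqref{eq:Diam_Grass} divided by $\|v_{k}\|\to\|v\|>0$), and along a subsequence $\rho(v_{k})\to t$; since each $\alpha_{v_{k}}$ is minimal on $[0,\rho(v_{k})]$ while $\Exp$ and $d_{r}$ are continuous, $\alpha_{v}$ is minimal on $[0,t]$, so $t\le\rho(v)$. Lower semicontinuity is the delicate part: assuming $\liminf\rho(v_{k})=t<\rho(v)$, pass to a subsequence and use that the cut point $\alpha_{v_{k}}(\rho(v_{k}))$ is either conjugate to $\bom$ along $\alpha_{v_{k}}$ --- in which case continuous dependence of Jacobi fields makes $\alpha_{v}(t)$ conjugate, impossible since $t<\rho(v)$ --- or the endpoint of a second minimal geodesic $\alpha_{w_{k}}$ with $w_{k}\neq v_{k}$, $\|w_{k}\|=\|v_{k}\|$; extracting $w_{k}\to w$ with $\|w\|=\|v\|$ and $\alpha_{w}$ minimal to $\alpha_{v}(t)$, either $w\neq v$ (then $\alpha_{v}(t)$ is a cut point and $\rho(v)\le t$, contradiction) or $w=v$ (then a second-variation argument on the thin "bigon" between $\alpha_{v_{k}}$ and $\alpha_{w_{k}}$ again forces $\alpha_{v}(t)$ conjugate, contradiction). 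All of this is carried out in \cite[\S 2.C.7 and Theorem 3.77]{Gallot1990}, so in the paper it is legitimate to simply invoke that reference; the above only records the structure of the argument.
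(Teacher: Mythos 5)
Your proposal is correct and matches the paper's treatment: the paper proves nothing here but simply invokes \cite[Theorem 3.77]{Gallot1990}, and what you have written is precisely the standard argument behind that citation (finiteness and continuity of the cut time, homogeneity, absence of conjugate points and uniqueness of minimizing geodesics before the cut point), correctly specialised to the compact manifold $\mathcal{G}(p,n)$. Since you yourself conclude that citing the reference suffices, there is nothing to add.
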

The image of the boundary $\partial\mathrm{V}_{\bom}$ then define the cut-locus:
\begin{defn}[Cut-locus]
	For any point $\bom\in\mathcal{G}(p,n)$, the cut-locus of $\bom$ is given by
	\begin{equation*}
	\text{Cut}(\bom):=\left\{\Exp_{\bom}(\rho(v)v),\quad \|v\|=1 \right\}.
	\end{equation*}
\end{defn}

In the specific case of Grassmann manifolds, there is a way to explicitly obtain the bound $\rho(v)$, while the main ideas are directly taken from~\cite[Theorem 12.5]{Kozlov2000}:
\begin{lem}\label{lem:rho_v_Grass}
	Let $\bom\in \mathcal{G}(p,n)$ and $v\in T_{\bom}\mathcal{G}(p,n)$, with horizontal lift given by some $\bZ\in \VecMat{n}{p}$. Then we have
	\begin{equation*}
	\rho(v)=\frac{\pi}{2\theta_{1}},
	\end{equation*}
	where $\theta_1$ is the maximal singular value of $\bZ$ and thus, taking back the open set $\mathrm{V}_{\bom}$ defined by~\eqref{eq:Open_set_Cut_locus} we have
	\begin{equation}\label{eq:Def_Vm_Angle}
	\mathrm{V}_{\bom}=\left\{v\in T_{\bom}\mathcal{G}(p,n),\quad \theta_{1}<\frac{\pi}{2} \right\}\cup \{0\}.
	\end{equation}
\end{lem}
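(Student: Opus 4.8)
The plan is to reduce everything to the explicit description of geodesics from Remark~\ref{rem:Geodesics_Intrinsic} and of the Riemannian distance in terms of Jordan's principal angles (Theorem~\ref{thm:Min_Geodeiscs_between_Two_Points} and~\eqref{eq:Geodesic_distance_Grassmann}). Assume first $2p\le n$; the case $2p>n$ is identical, using the remark following Lemma~\ref{lem:Hor_Lift_Intrisic}, with $n-p$ nonzero angles in place of $p$. By Lemma~\ref{lem:Hor_Lift_Intrisic} choose an orthonormal family $\by_1,\dots,\by_{2p}$ of $\RR^n$ with $\bY=[\by_1,\dots,\by_p]\in\pi^{-1}(\bom)$ and horizontal lift $\bZ=[\theta_1\by_{p+1},\dots,\theta_p\by_{2p}]$, $\theta_1\ge\dots\ge\theta_p\ge 0$. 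Then $\alpha_v(t)=\pi(\bY(t))$ with $\bY(t)=[\cos(\theta_1 t)\by_1+\sin(\theta_1 t)\by_{p+1},\dots,\cos(\theta_p t)\by_p+\sin(\theta_p t)\by_{2p}]$, and $\|v\|=(\sum_i\theta_i^2)^{1/2}$, so the length of the geodesic restricted to $[0,t]$ (for $t\ge 0$) is $L(\alpha_v|_{[0,t]})=t\,(\sum_i\theta_i^2)^{1/2}$.

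Next I would compute the principal angles between $\bom$ and $\alpha_v(t)$. A direct computation gives $\bY^{T}\bY(t)=\mathrm{diag}(\cos(\theta_1 t),\dots,\cos(\theta_p t))$, hence its singular values are $|\cos(\theta_i t)|$ and the Jordan angles are $\phi_i(t)=\arccos|\cos(\theta_i t)|\in[0,\pi/2]$. If $t\le\pi/(2\theta_1)$ then $0\le\theta_i t\le\pi/2$ for every $i$ (since $\theta_i\le\theta_1$), so $\phi_i(t)=\theta_i t$; by~\eqref{eq:Geodesic_distance_Grassmann} this gives $d_r(\bom,\alpha_v(t))=(\sum_i(\theta_i t)^2)^{1/2}=t\,(\sum_i\theta_i^2)^{1/2}=L(\alpha_v|_{[0,t]})$. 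Thus $\alpha_v|_{[0,t]}$ is length-minimizing for every $t\le\pi/(2\theta_1)$, i.e. $[0,\pi/(2\theta_1)]\subseteq I_v$, and in particular $\rho(v)\ge\pi/(2\theta_1)$.

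For the reverse bound, take $t=\pi/(2\theta_1)+\epsilon$ with $\epsilon>0$ small. For the indices $i$ with $\theta_i=\theta_1$ one has $\theta_i t\in(\pi/2,\pi)$, so $\phi_i(t)=\pi-\theta_i t<\theta_i t$, while for the remaining indices $\theta_i t<\pi/2$ and $\phi_i(t)=\theta_i t$. Hence $d_r(\bom,\alpha_v(t))=(\sum_i\phi_i(t)^2)^{1/2}<t\,(\sum_i\theta_i^2)^{1/2}=L(\alpha_v|_{[0,t]})$, so $\alpha_v|_{[0,t]}$ is not minimizing and $t\notin I_v$. Since $I_v=[0,\rho(v)]$ is an interval, this forces $\rho(v)\le\pi/(2\theta_1)$, and therefore $\rho(v)=\pi/(2\theta_1)$, where $\theta_1$ is the maximal singular value of $\bZ$. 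Substituting into the definition~\eqref{eq:Open_set_Cut_locus} of $\mathrm{V}_{\bom}$, the condition $\rho(v)>1$ is equivalent to $\theta_1<\pi/2$, which is exactly~\eqref{eq:Def_Vm_Angle}.

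The only genuinely delicate point is the behaviour of the angles past the critical time: one must observe that $|\cos(\theta_1 t)|$ ceases to decrease and starts increasing precisely at $\theta_1 t=\pi/2$, so the largest Jordan angle $\phi_1(t)$ starts to \emph{decrease} there, which is what makes the geodesic stop realizing the distance; and one must keep track of the bookkeeping when several $\theta_i$ coincide or when more than one $\theta_i t$ crosses $\pi/2$ — in every such case the competing path is strictly shorter, so the strict inequality $d_r(\bom,\alpha_v(t))<L(\alpha_v|_{[0,t]})$ is preserved. The degenerate case $v=0$ (where $\theta_1=0$) is excluded from the formula for $\rho(v)$ and is accounted for by the explicit adjunction of $\{0\}$ in~\eqref{eq:Open_set_Cut_locus} and~\eqref{eq:Def_Vm_Angle}.
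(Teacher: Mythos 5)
Your proof is correct, and it reaches the conclusion by a route that differs from the paper's in one essential step. For the lower bound $\rho(v)\ge \pi/(2\theta_1)$ the two arguments coincide in substance: reduce to the canonical form of Lemma~\ref{lem:Hor_Lift_Intrisic} and observe, via the principal angles and the distance formula~\eqref{eq:Geodesic_distance_Grassmann}, that $\alpha_v|_{[0,t]}$ realizes the Riemannian distance for $t\le \pi/(2\theta_1)$ (the paper simply cites Theorem~\ref{thm:Min_Geodeiscs_between_Two_Points} for this minimality). For the upper bound the paper computes nothing further: it notes that at $t=\pi/(2\theta_1)$ the largest principal angle between $\bom$ and $\alpha_v(t)$ equals $\pi/2$, so by Theorem~\ref{thm:Min_Geodeiscs_between_Two_Points} the minimizing geodesic is no longer unique there, and then invokes the general Riemannian fact \cite[Corollary 2.111]{Gallot1990} that a geodesic ceases to be minimizing beyond its first non-uniqueness point. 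You instead prove the loss of minimality by hand: for $t=\pi/(2\theta_1)+\epsilon$ the largest Jordan angle becomes $\pi-\theta_1 t<\theta_1 t$, so $d_{r}(\bom,\alpha_v(t))<t\,\|v\|$ strictly and the restriction cannot be length minimizing. Your version is more self-contained --- it avoids the external cut-locus corollary and exhibits an explicit gap between distance and arc length --- at the cost of the bookkeeping you flag about which $\theta_i t$ have crossed $\pi/2$, which you handle correctly since $\arccos\lvert\cos x\rvert\le x$ for all $x\ge 0$, with strict inequality once $x>\pi/2$. Both arguments ultimately rest on Theorem~\ref{thm:Min_Geodeiscs_between_Two_Points}, so neither is more general; yours is simply more explicit.
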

\begin{proof}
	From Lemma~\ref{lem:Hor_Lift_Intrisic}, we can consider an orthonormal basis $\by_1,\dots,\by_n$ of $\RR^n$ such that $\bY\in \pi^{-1}(\bom)$ and a horizontal lift $\bZ$ of $v$ are given by (for $2p\leq n$):
	\begin{align*}    
	\bY=[\by_1,\dots,\by_{p}],\quad
	\bZ=[\theta_{1}\by_{p+1},\dotsc,\theta_{p}\by_{2p}],
	\end{align*}
	where $0\leq \theta_{p}\leq \dots \leq \theta_{1}$ are the singular values of any horizontal lift of $v$.  
	
	Now, from Theorem~\ref{thm:Min_Geodeiscs_between_Two_Points} the geodesic $\alpha(t)=\pi(\bY(t))$ with
	\begin{equation*}
	\bY(t)=\left[\cos(\theta_{1}t)\by_1+\sin(\theta_{1}t)\by_{p+1},\dots,\cos(\theta_{p}t)\by_p+\sin(\theta_{p}t)\by_{2p}\right]
	\end{equation*}
	is minimal for all $t\leq \pi/(2\theta_1)$, and is not unique anymore for $t=\pi/(2\theta_1)$. From ~\cite[Corollary 2.111]{Gallot1990}, $\alpha$ is no longer minimal on $[0,\pi/(2\theta_1)+\varepsilon]$ for all $\varepsilon>0$, so we can conclude (the proof being the same for $2p>n$). The last equation~\eqref{eq:Def_Vm_Angle} is straightforward.
\end{proof}
Our main result is now:
\begin{thm}\label{thm:Cut_Locus_Grass}
	For any $\bom\in \mathcal{G}(p,n)$ we have
	\begin{equation*}
	\Exp_{\bom}(\mathrm{V}_\bom)=\mathrm{U}_{\bom}
	\end{equation*}
	with $\mathrm{U}_{\bom}$ and $\mathrm{V}_{\bom}$ respectively defined by~\eqref{eq:Def_Open_Set_Log} and~\eqref{eq:Open_set_Cut_locus}. Furthermore the cut-locus at $\bom$ is given by:
	\begin{equation*}
	\emph{\text{Cut}}(\bom)=\left\{ \bom',\quad \bY^{T}\bY' \text{ is singular},\quad \pi(\bY)=\bom,\quad \pi(\bY')=\bom' \right\}.
	\end{equation*}
\end{thm}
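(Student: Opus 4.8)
The plan is to prove both assertions by putting a horizontal lift into the canonical form of Lemma~\ref{lem:Hor_Lift_Intrisic}, computing $\Exp_{\bom}$ explicitly through the intrinsic geodesic formula of Remark~\ref{rem:Geodesics_Intrinsic}, and matching the outcome against the Jordan-principal-angle description of $\mathrm{U}_{\bom}$ (Lemma~\ref{lem:COnd_Angle_Open_set}) and the singular-value description of $\mathrm{V}_{\bom}$ (Lemma~\ref{lem:rho_v_Grass}, equation~\eqref{eq:Def_Vm_Angle}). As throughout this appendix I would carry this out for $2p\le n$, the case $2p>n$ being identical with the modified canonical forms already recorded.

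First I would show $\Exp_{\bom}(\mathrm{V}_{\bom})=\mathrm{U}_{\bom}$. For the inclusion $\subseteq$: given $v\in\mathrm{V}_{\bom}$, Lemma~\ref{lem:Hor_Lift_Intrisic} furnishes an orthonormal basis $\by_1,\dots,\by_n$ of $\RR^n$ with $\bY=[\by_1,\dots,\by_p]\in\pi^{-1}(\bom)$ and horizontal lift $\bZ=[\theta_1\by_{p+1},\dots,\theta_p\by_{2p}]$, where $\theta_1\ge\cdots\ge\theta_p\ge0$ and $\theta_1<\pi/2$ by~\eqref{eq:Def_Vm_Angle}. By Remark~\ref{rem:Geodesics_Intrinsic}, $\Exp_{\bom}(v)=\pi(\bY')$ with $\bY'=[\cos\theta_1\,\by_1+\sin\theta_1\,\by_{p+1},\dots,\cos\theta_p\,\by_p+\sin\theta_p\,\by_{2p}]$, and orthonormality of $\by_1,\dots,\by_{2p}$ gives $\bY^{T}\bY'=\operatorname{diag}(\cos\theta_1,\dots,\cos\theta_p)$, which is invertible since $\theta_i\le\theta_1<\pi/2$; hence $\Exp_{\bom}(v)\in\mathrm{U}_{\bom}$. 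For $\supseteq$: given $\bom'\in\mathrm{U}_{\bom}$, set $v:=\Log_{\bom}(\bom')$; by the computation preceding Lemma~\ref{lem:Exp_and_Log} a horizontal lift of $v$ is $[\theta_1\by_{p+1},\dots,\theta_p\by_{2p}]$ with the $\theta_i$ the Jordan principal angles between $\bom$ and $\bom'$, and $\theta_1<\pi/2$ by Lemma~\ref{lem:COnd_Angle_Open_set}, so $v\in\mathrm{V}_{\bom}$; since $\Exp_{\bom}(v)=\bom'$ by Lemma~\ref{lem:Exp_and_Log}, we conclude.

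Next I would compute $\text{Cut}(\bom)$ directly. For $\text{Cut}(\bom)\subseteq\{\bom':\bY^{T}\bY'\text{ singular}\}$: take $\|v\|=1$, so a horizontal lift has singular values $\theta_1\ge\cdots\ge\theta_p\ge0$ with $\sum\theta_i^2=1$ and $\theta_1>0$; Lemma~\ref{lem:rho_v_Grass} gives $\rho(v)=\pi/(2\theta_1)$, so $\rho(v)v$ admits a horizontal lift with singular values $\varphi_i:=\tfrac{\pi}{2}\theta_i/\theta_1\in[0,\pi/2]$ and $\varphi_1=\pi/2$. Running the same computation as above, $\Exp_{\bom}(\rho(v)v)=\pi(\bY')$ with $\bY^{T}\bY'=\operatorname{diag}(\cos\varphi_1,\dots,\cos\varphi_p)$, whose first entry vanishes, so this matrix is singular. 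Conversely, if $\bY^{T}\bY'$ is singular then the largest Jordan angle is $\theta_1=\pi/2$ by Lemma~\ref{lem:COnd_Angle_Open_set}; pick the orthonormal family $\by_1,\dots,\by_n$ realizing these angles as in Theorem~\ref{thm:Min_Geodeiscs_between_Two_Points}, set $c:=(\sum\theta_i^2)^{1/2}\ge\pi/2>0$, and let $v$ be the tangent vector with horizontal lift $c^{-1}[\theta_1\by_{p+1},\dots,\theta_p\by_{2p}]$, so that $\|v\|=1$ and $\rho(v)=\pi/(2\cdot\theta_1/c)=c$ by Lemma~\ref{lem:rho_v_Grass}. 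Then $\rho(v)v$ has horizontal lift $[\theta_1\by_{p+1},\dots,\theta_p\by_{2p}]$, and Remark~\ref{rem:Geodesics_Intrinsic} gives $\Exp_{\bom}(\rho(v)v)=\bom'$, so $\bom'\in\text{Cut}(\bom)$.

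The repeated identity $\bY^{T}\bY'=\operatorname{diag}(\cos\theta_i)$ is routine bookkeeping once the basis is adapted. The one point needing care — and what I would flag as the main obstacle — is keeping the scaled angles $\varphi_i=\rho(v)\theta_i$ inside $[0,\pi/2]$, so that $\cos\varphi_i\ge0$ genuinely are the singular values of $\bY^{T}\bY'$ (hence the principal-angle cosines) and the geodesic of Remark~\ref{rem:Geodesics_Intrinsic} is minimizing precisely up to its cut time; this is exactly ensured by the normalization $\rho(v)=\pi/(2\theta_1)$ of Lemma~\ref{lem:rho_v_Grass}, since then $\varphi_i=\tfrac{\pi}{2}\theta_i/\theta_1\le\pi/2$.
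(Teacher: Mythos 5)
Your proof is correct and follows essentially the same route as the paper's: the canonical form of Lemma~\ref{lem:Hor_Lift_Intrisic} plus the intrinsic geodesic formula for the inclusion $\Exp_{\bom}(\mathrm{V}_{\bom})\subseteq\mathrm{U}_{\bom}$, and Theorem~\ref{thm:Min_Geodeiscs_between_Two_Points} together with Lemmas~\ref{lem:COnd_Angle_Open_set} and~\ref{lem:rho_v_Grass} for the converse and for the cut-locus. You merely spell out in full the steps the paper compresses into ``direct consequence'' and ``follows in the same way'' (in particular the scaling $\varphi_i=\tfrac{\pi}{2}\theta_i/\theta_1$ and the identity $\bY^{T}\bY'=\operatorname{diag}(\cos\theta_i)$), which is a faithful and complete elaboration of the intended argument.
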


\begin{proof}
	Taking back Lemma~\ref{lem:rho_v_Grass} recall that
	\begin{equation*}
	\mathrm{V}_{\bom}=\left\{v\in T_{\bom}\mathcal{G}(p,n),\quad \theta_{1}<\frac{\pi}{2} \right\}\cup \{0\}
	\end{equation*}
	where $\theta_1$ is the maximal singular value of any horizontal lift $\bZ\in \VecMat{n}{p}$ of $v$. Take now any $v\in \mathrm{V}_{\bom}$ and define an orthonormal basis $\by_1,\dots,\by_n$ of $\RR^n$ like in Lemma~\ref{lem:Hor_Lift_Intrisic}, so that for $2p\leq n$
	\begin{equation*}
	\Exp_{\bom}(v)=\pi\left([\cos(\theta_{1})\by_1+\sin(\theta_{1})\by_{p+1},\dots,\cos(\theta_{p})\by_p+\sin(\theta_{p})\by_{2p}] \right),\quad \theta_1<\pi/2.
	\end{equation*}
	From Lemma~\ref{lem:COnd_Angle_Open_set} we deduce that $\Exp_{\bom}(v)\in \mathrm{U}_{\bom}$ and thus $\Exp_{\bom}(\mathrm{V}_{\bom})\subset \mathrm{U}_{\bom}$.
	
	The converse is a direct consequence of Theorem~\ref{thm:Min_Geodeiscs_between_Two_Points} and Lemma~\ref{lem:COnd_Angle_Open_set}, all proof being the same for $2p>n$.   
	
	Finally, the statement for $\text{Cut}(\bom)$ follows in the same way, so we can conclude.  
\end{proof}

\end{document}